\newtheorem{theorem}{Theorem}[section]
\newtheorem{lemma}[theorem]{Lemma}
\newtheorem{proposition}[theorem]{Proposition}
\newtheorem{corollary}[theorem]{Corollary}
\newtheorem{remark}[theorem]{Remark}
\newtheorem{remark_example}[theorem]{Remark and example}
\newtheorem{definition}[theorem]{Definition}
\numberwithin{equation}{section}
\newcommand{\N}{{\mathbb N}} 
\newcommand{\R}{{\mathbb R}}
\newcommand{\Rn}{{\mathbb R}^n}
\newcommand{\Rnn}{{\mathbb R}^{n+1}}
\newcommand{\s}{\mathbb{S}}
\newcommand{\K}{{\mathcal K}}
\newcommand{\Kn}{{\mathcal K}^n}
\newcommand{\KN}{{\mathcal K}^{n+1}}
\newcommand{\rec}{\mathrm{rec}}
\newcommand{\fconv}{{\mbox{\rm Conv}(\R^n)}}
\newcommand{\Conv}{\mathrm{Conv}}
\newcommand{\fconvs}{{\mbox{\rm Conv}_{{\rm sc}}(\R^n)}} 
\newcommand{\fconvf}{{\mbox{\rm Conv}(\R^n; \R)}}
\newcommand{\fconvcd}{{\mbox{\rm Conv}_{{\rm cd}}(\R^n)}} 
\newcommand{\fconvl}{{\mbox{\rm Conv}^*_{\rm cd}(\R^n)}}
\newcommand{\fconvLip}{{\mbox{\rm Conv}_{{\rm Lip}}(\R^n;\R)}} 
\newcommand{\sq}{\mathbin{\vcenter{\hbox{\rule{.3ex}{.3ex}}}}} 
\newcommand{\PVal}{\mathrm{PVal}}
\newcommand{\Val}{\mathrm{Val}}
\newcommand{\PV}{\mathrm{PV}}
\newcommand{\GW}{\mathrm{GW}}
\newcommand{\interior}{\operatorname{int}} 
\newcommand{\dom}{\operatorname{dom}} 
\newcommand{\epi}{\operatorname{epi}} 
\newcommand{\supp}{\operatorname{supp}} 
\newcommand{\cone}{\Gamma}
\newcommand{\A}{\mathcal{A}}
\newcommand{\Oc}{\mathcal{O}}
\DeclareMathOperator{\vol}{vol}
\newcommand{\Sym}{\operatorname{Sym}} 
\title{Polynomial valuations on convex functions and their maximal extensions}
\author{Jonas Knoerr and Jacopo Ulivelli}
\date{}
\begin{document}

\maketitle

\begin{abstract}
    Extension problems for polynomial valuations on different cones of convex functions are investigated. It is shown that for the classes of functions under consideration, the extension problem reduces to a simple geometric obstruction on the support of these valuations. The results rely on a homogeneous decomposition for the space of polynomial valuations of bounded degree and the support properties of certain distributions associated to the homogeneous components. As an application, an explicit integral representation for valuations of top degree is established.
    
   
    \bigskip

{\noindent
{\bf 2020 AMS subject classification:} 52B45, 26B25, 53C65\\
{\bf Keywords:} convex function, convex body, valuation, continuous extension, polynomial, distribution
}
\end{abstract}

\section{Introduction}

Let $\mathcal{F}$ be a family of (extended) real-valued functions. A functional $Z:\mathcal{F}\rightarrow V$ with values in a vector space $V$ is called a valuation if 
\begin{align}\label{eq:valuation_on_functions}
	Z(f\vee h)+Z(f\wedge h)=Z(f)+Z(h)
\end{align}
for all $f,h\in \mathcal{F}$ such that the pointwise maximum $f\vee h$ and pointwise minimum $f\wedge h$ also belong to $\mathcal{F}$. Over the last ten years, there has been a substantial effort to characterize a variety of well-known functionals on different spaces of functions as the unique valuations with given invariance properties. We refer to \cite{ColesantiLombardiValuationsspacequasi2017,ColesantiEtAlTranslationinvariantvaluations2018,MussnigValuationslogconcave2021, KoneValuationsOrliczspaces2014,LiMaLaplacetransformsvaluations2017,LudwigValuationsfunctionspaces2011,LudwigValuationsSobolevSpaces2012,ColesantiEtAlclassinvariantvaluations2020,ColesantiEtAlContinuousvaluationsspace2021, LudwigFisherinformationmatrix2011,MaRealvaluedvaluations2016,WangSemivaluations$BVRn$2014,BaryshnikovEtAlHadwigersTheoremdefinable2013,TradaceteVillanuevaValuationsBanachlattices2020,TsangValuationsLpspaces2010,TsangMinkowski2012} for some of these classification results.\\
In this article, we consider valuations on subsets of the space 
\begin{align*}
	\fconv \coloneqq\{f: \R^n \rightarrow \R\cup\{\infty\}\colon f \text{ is convex, lower semi-continuous, and proper} \},
\end{align*}
where we call an extended real-valued function proper if it is not identical to $\infty$. This space carries a natural metrizable topology induced by epi-convergence (compare Section \ref{sec:preliminaries}), and we will consider its subspaces with the induced topology. Due to their intimate relation to convex bodies, valuations on convex functions have been the focus of intense research \cite{ColesantiEtAlValuationsconvexfunctions2019, ColesantiEtAlMinkowskivaluationsconvex2017, ColesantiEtAlTranslationinvariantvaluations2018, ColesantiEtAlHessianValuations2020, ColesantiEtAlHadwigertheoremconvex2020, ColesantiEtAlhomogeneousdecompositiontheorem2020, ColesantiEtAlHadwigertheoremconvex2021, ColesantiEtAlHadwigertheoremconvex2022, ColesantiEtAlHadwigertheoremconvex2022a, HugMussnigUlivelli1, HugMussnigUlivelli2, MussnigVolumepolarvolume2019, Mussnig$SLn$invariantvaluations2021, Knoerrsupportduallyepi2021, KnoerrUnitarily2021, KnoerrSmoothvaluationsconvex, KnoerrSingularvaluationsHadwiger2022,KnoerrOperators2023}. Most of the literaure considers valuations defined on suitable subsets, insetad of the whole space $\fconv$. While these restricted settings are often motivated by geometric considerations (see, for example, \cite{HofstaetterSchusterBlaschkeSantaloinequalities2023}), there usually is no single natural choice. This is potentially problematic since these classification results can depend in a highly non-trivial way on the chosen subset. Consequently, the classes of valuations can differ drastically, compare \cite{MussnigVolumepolarvolume2019,Mussnig$SLn$invariantvaluations2021,HofstaetterKnoerrEquivariantendomorphismsconvex2023}. A possible approach to this issue is finding the smallest subset of functions, to obtain the largest possible class of valuations. Doing so, we shift the problem to the question whether the valuations obtained in such a classification can be extended to larger spaces of convex functions, which is in general highly non-trivial.\\

In this article, we study this extension problem for the class of \emph{polynomial valuations}. Let $\mathrm{Aff}(n)$ denote the space of all affine functions $\ell:\R^n\rightarrow\R$. We can identify this space with $\R^{n+1}$ by identifying $(y,c)\in \R^n\times\R$ with the map $\ell(x)=\langle y,x\rangle +c$. Given a set $\cone\subset \fconv$ that is closed with respect to the addition of affine functions, we call a valuation $Z:\cone\rightarrow V$ polynomial of degree at most $d\in \mathbb{N}$ if the map
\begin{align*}
	\mathrm{Aff}(n)&\rightarrow V\\
	\ell&\mapsto Z(f+\ell)
\end{align*}
is a polynomial on $\mathrm{Aff}(n)\cong \R^{n+1}$ of degree at most $d$ for every $f\in\cone$. In other words, we require that these maps are polynomials for every choice of $f\in \cone$ and that the degree of these maps is uniformly bounded independent of this choice. We will also call such a valuation a polynomial valuation for brevity without explicitly stating the bound on the degree. The main goal of this article is to show that under suitable assumptions on the set $\cone$ and the vector space $V$, the problem of extending a given continuous polynomial valuation can be solved if and only if a single geometric restriction on the support of these functionals (which we define below) is satisfied. Before we state the precise setup for our main results, let us present the outcome of this approach in the following situation: Let $\fconvf$ denote the space of all finite convex functions on $\R^n$ and consider the subset $\fconvLip\subset \fconvf$ of all globally Lipschitz continuous convex functions, which is dense in $\fconvf$. In this setting, the topology of epi-convergence is equivalent to uniform convergence on compact subsets of $\Rn$.
\begin{theorem}
	\label{maintheorem:Lipschitz}
	Let $Z:\fconvLip\rightarrow\R$ be a continuous polynomial valuation of degree at most $d$. Then $Z$ extends uniquely to a continuous polynomial valuation of degree at most $d$ on $\fconvf$.
\end{theorem}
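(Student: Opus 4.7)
The plan is to reduce the theorem to the general extension criterion that the abstract announces as the paper's main result: a continuous polynomial valuation of bounded degree on a subclass $\cone\subset\fconv$ extends to $\fconvf$ if and only if a geometric restriction on its support (a distributional invariant constructed in the paper) is satisfied. The main task is then to verify that this support condition holds automatically for valuations defined on $\fconvLip$.

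Uniqueness is immediate from density. Given any $f\in\fconvf$, consider the Lipschitz infimal convolutions $f_k(x):=\inf_{y\in\R^n}\{f(y)+k|x-y|\}$, which belong to $\fconvLip$. Since $f$ is locally Lipschitz, one has $f_k=f$ on any fixed compact set once $k$ exceeds the local Lipschitz constant of $f$ there, so $f_k\to f$ uniformly on compact subsets of $\R^n$; by the remark in the excerpt this is the epi-convergence topology on $\fconvf$. Hence $\fconvLip$ is dense in $\fconvf$ and any two continuous extensions of $Z$ must coincide.

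For existence I would first invoke the homogeneous decomposition theorem for polynomial valuations of bounded degree (announced in the abstract as a central tool) to write $Z=\sum_{k=0}^{d}Z_k$, where each $Z_k$ is a continuous polynomial valuation on $\fconvLip$ that is homogeneous of degree $k$ in affine perturbations. Since the decomposition is compatible with polynomial degree, it suffices to extend each $Z_k$ separately. To each $Z_k$ one associates the distributional datum from the paper's framework, whose support is precisely the quantity that controls extendability. The key observation is that $\fconvLip$ is closed under addition of arbitrary Lipschitz convex functions, and in particular contains affine perturbations of any slope and Lipschitz convex functions of any finite Lipschitz constant; together with the homogeneity of $Z_k$ in $\ell$ and the continuity of $Z$, this richness of admissible test perturbations forces the associated distribution to be localized inside the region required by the extension criterion. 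Applying that criterion extends each $Z_k$ continuously and polynomially to $\fconvf$ with degree at most $k$, and summing produces an extension $\tilde Z$ of degree at most $d$.

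The main technical obstacle will be the support verification: translating the qualitative hypothesis that $Z$ is defined and continuous on \emph{all} of $\fconvLip$ (rather than on some much smaller class such as smooth or compactly-supported functions) into the precise distributional support statement demanded by the extension criterion. I expect the argument to proceed by probing the associated distribution with sequences of Lipschitz test functions of unbounded Lipschitz norm and by exploiting the polynomial structure in $\ell$ to recover, order by order, the coefficients of the distribution and pin down their support.
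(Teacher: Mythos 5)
Your overall plan --- reduce to the paper's extension criterion and check the support condition --- is the right skeleton, and your density/uniqueness argument via the Pasch--Hausdorff envelope $f_k = f\infconv k|\cdot|$ is correct. But the ``main technical obstacle'' you identify, namely a nontrivial support verification driven by the richness of $\fconvLip$, is a misdiagnosis. Both $\fconvLip$ and $\fconvf$ are $(\A,\Oc)$-cones with $\A=\Oc=\R^n$, so the geometric constraint in Theorem~\ref{maintheorem:extension}, $\supp Z\subset\interior\Oc$, reads $\supp Z\subset\R^n$ and is vacuous: there is nothing to ``pin down'' by probing with Lipschitz functions. What does need to be known is that $\supp Z$ is \emph{compact}, and the paper obtains this not from the specific cone $\fconvLip$ but from the general compactness result Theorem~\ref{thm:comp_supp} (equivalently the last part of Theorem~\ref{maintheorem:GW}), which applies because $V=\R$ admits a continuous norm. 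The paper explicitly stresses that this compactness ``is not a consequence of properties of the spaces $\cone$, but rather a consequence of the choice of topology.'' Your causal story (``richness of admissible test perturbations forces localization'') inverts this, and an attempt to execute it would be chasing an argument that is not there.

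Once those two points are in place, the proof is essentially a two-liner: Theorem~\ref{maintheorem:extension} (with $I_\A=0$) gives bijections $\PV_d(\fconvLip,\R)\to\PV_d(\fconvl,\R)$ and $\PV_d(\fconvf,\R)\to\PV_d(\fconvl,\R)$ by restriction to $\fconvl$, and composing yields the unique extension to $\fconvf$. No per-component extension is needed, and in fact the decomposition you invoke, $Z=\sum_{k=0}^{d}Z_k$ with components ``homogeneous of degree $k$ in affine perturbations,'' matches neither the homogeneous decomposition of Theorem~\ref{maintheorem:HomogeneousDecomposition} (epi-homogeneity $Z(tf)=t^kZ(f)$, with $k$ running up to $n+d$) nor the expansion in Proposition~\ref{proposition:PolynomialTopComponentWRTDualTranslations} of $Z(f+\ell)$ in powers of $\ell$ (the $Y_i$, which are tensor-valued). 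You appear to conflate the two. So the proposal points at the correct theorem but aims its effort at the wrong bottleneck, and the decomposition step as written would not go through.
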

Note that there is no restriction to the extension problem in this specific case. In stark contrast, it was shown in \cite{ColesantiEtAlhomogeneousdecompositiontheorem2020} that there exists no non-trivial (that is, non-constant) continuous and real-valued polynomial valuation of degree $0$ on $\fconv$. Thereofore, the extension problem of extending such valuations from the dense subset $\fconvf\subset\fconv$ admits only trivial solutions.

\subsection{Main results}

Let us now introduce the general framework of this article. We will be interested in cones $\cone\subset \fconv$ that contain enough functions to allow for suitable local variations. Consider the space
\begin{align*}
	\fconvcd\coloneqq \{f\in\fconv: \dom (f)~\text{is compact}\},
\end{align*}
where $\dom f=\{x\in \R^n: f(x)<\infty\}$ denotes the domain of $f\in\fconv$. This space admits a rather simple interpretation in terms of convex bodies, compare \cite{KnoerrUlivelli2023}, which can be used to interpret valuations on convex functions as valuations on convex bodies. We will be interested in the image of this space under the Fenchel-Legendre transform, which assigns to $f\in\fconv$ the convex function $f^*\in\fconv$ given by
\begin{equation}\label{eq:F-L}
	f^*(x) \coloneqq \sup_{y \in \R^n} \{\langle x, y \rangle - f(y) \}.
\end{equation}For example, the Fenchel-Legendre transform of the convex indicator of a convex set $A\subset \R^n$,
\begin{align*}
	I_A(x)=\begin{cases}
		0 & x\in A,\\
		\infty & x\notin A,
	\end{cases}
\end{align*}
is the support function $h_A(y)=\sup_{x\in A}\langle y,x\rangle$ of $A$. Under this map, $\fconvcd$ is mapped to
\begin{align*}
	\fconvl\coloneqq \{f\in\fconv: \exists K\in\KN~\text{s.t.}~f=h_K(\cdot,-1)\}.
\end{align*}
Moreover, it is easy to see that $\fconvLip$ is mapped to the subspace of $\fconv$ consisting of functions with bounded domain. In particular, $\fconvl\subset \fconvLip$. \\
We will consider valuations on the following sets of functions.
\begin{definition}
	Let $\Oc\subset \A\subset \R^n$ be two closed convex sets with non-empty interior. We call a subset $\cone\subset \Conv(\R^n)$ an $(\A,\Oc)$-cone if
	\begin{itemize}
		\item The set $\cone$ is a cone with respect to pointwise addition and multiplication with $t>0$,
		\item For every $f\in\cone$, $\Oc\subset\dom f\subset \A$,
		\item The functions $I_\A$ and $I_\Oc$ belong to $\cone$,
		\item For every $K\in\mathcal{K}^{n+1}$, the function $h_K(\cdot,-1) +I_\A$ belongs to $\cone$.
	\end{itemize}
\end{definition}\noindent
	Notice that all of the sets $\fconvf$, $\fconvLip$, and $\fconvl$ are $(\A,\Oc)$-cones for $\A=\Oc=\R^n$. In contrast, $\fconv$ is not an $(\A,\Oc)$-cone for any choice of subsets $\Oc\subset \A\subset \R^n$. In fact, in many applications, the relevant spaces are themselves not $(\A,\Oc)$-cones, but contain a variety of  $(\A,\Oc)$-cones for different choices of $\Oc\subset \A\subset \R^n$, usually as a dense subset. Consequently, by restricting a valuation on a given set to one of these $(\A,\Oc)$-cones, the results presented below can still be applied in these more general settings. In Section \ref{sec:dualr_results}, we will present a geometric interpretation of these families of functions, and corresponding version of the following results obtained by duality through the Fenchel-Legendre transform.\\

Throughout this article, $V$ will denote a Hausdorff topological vector space over the real numbers. Let $\cone\subset\fconv$ denote an $(\A,\Oc)$-cone and denote by $\PV_d(\cone,V)$ the space of all continuous valuations $Z:\cone\rightarrow V$ that are polynomial of degree at most $d \in \N $. \\

Note that any element $Z\in\PV_d(\cone,V)$ defines a valuation on $\fconvl$ by considering the map $f\mapsto Z(f+I_\A)$. It is then easy to see (compare Theorem \ref{theorem:extension}) that this defines an element of $\PV_d(\fconvl,V)$. We may consider this as an inclusion
\begin{align*}
	\PV_d(\cone,V)\hookrightarrow \PV_d(\fconvl,V).
\end{align*}
The main goal of this article is to describe the image of this map and its dependence on $\cone$ in order to describe which valuations in $\PV_d(\fconvl,V)$ can be extended to $\cone$. For $d=0$, a version of this problem was already considered by the first named author in \cite{Knoerrsupportduallyepi2021}. 
The main idea is that homogeneous valuations in $\PV_d(\cone,V)$ may be interpreted as distributions on suitable products of copies of $\R^n$. The convex sets $\A$ and $\Oc$ then impose restrictions on the support of these distributions and, under suitable assumptions on the vector space $V$, the supports of these distributions are always compact. In particular, the value of $Z(f)$ should only depend on the values of $f$ on a neighbourhood of the support of the distributions, which allows us to freely change the convex function outside of this set to obtain suitable extensions to different families of convex functions.\\

The compactness of the support of these distributions is crucial for this kind of extension procedure. The argument in \cite{Knoerrsupportduallyepi2021} relied on the existence of convex functions that diverge arbitrarily fast as one approaches the boundary of their domain. In particular, this argument does not apply to valuations on $\fconvl$, which are essentially our main objects of interest. The key observation we are going to exploit is that the compactness of these supports is not a consequence of properties of the spaces $\cone$, but rather a consequence of the choice of topology - epi-convergence.\\

In order to make this precise, we require a decomposition of $\PV_d(\cone,V)$ into homogeneous components. For $d=0$, such a decomposition was already obtained by Colesanti, Ludwig, and Mussnig \cite{ColesantiEtAlhomogeneousdecompositiontheorem2020} (see also \cite{Knoerrsupportduallyepi2021}) as a consequence of the homogeneous decomposition for translation invariant valuations by McMullen \cite{McMullenContinuoustranslationinvariant1980}. We call $Z\in \PV_d(\cone, V)$ homogeneous of degree $k$ if 
\begin{align*}
	Z(t f)=t ^kZ(f)\quad\text{for all}~t > 0,~f\in\cone.
\end{align*}
Following an analogue result on polynomial valuations on convex bodies due to  Khovanski\u{\i} and Pukhlikov \cite{PukhlikovKhovanskiiFinitely1992}, we will deduce the corresponding version for polynomial valuations on $\cone$.
\begin{theorem}
	\label{maintheorem:HomogeneousDecomposition}
	Let $Z\in \PV_d(\cone,V)$. There exist unique continuous $k$-homogeneous valuations $Z_k\in \PV_d(\cone,V)$, $k \in \{0,1,\dots, n+d\}$, such that
	\begin{align*}
		Z=\sum_{i=0}^{n+d}Z_k.
	\end{align*}
\end{theorem}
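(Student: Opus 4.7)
The plan is to reduce the statement to the analogous homogeneous decomposition for polynomial valuations on convex bodies due to Khovanski\u{\i} and Pukhlikov \cite{PukhlikovKhovanskiiFinitely1992}, exploiting the fact that the defining properties of an $(\A,\Oc)$-cone provide a natural map $\K^{n+1}\to\cone$ given by $K \mapsto h_K(\cdot,-1) + I_\A$.

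Given $Z\in \PV_d(\cone,V)$, I first define $\tilde Z: \K^{n+1}\to V$ by setting $\tilde Z(K) = Z(h_K(\cdot,-1)+I_\A)$. Using the compatibility of the support function with union and intersection (when the union is convex), together with the valuation identity for $Z$, one checks that $\tilde Z$ is a continuous valuation on $\K^{n+1}$. Translating $K$ by a vector $v = (v',v_{n+1})\in \R^{n+1}$ corresponds precisely to adding the affine function $\ell(x) = \langle v',x\rangle - v_{n+1}$ to $h_K(\cdot,-1)$, so the polynomial hypothesis on $Z$ forces $v\mapsto \tilde Z(K+v)$ to be a polynomial of degree at most $d$ on $\R^{n+1}$. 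Applying the theorem of Khovanski\u{\i} and Pukhlikov then yields a unique decomposition $\tilde Z = \sum_{k=0}^{n+1+d}\tilde Z_k$ into continuous $k$-homogeneous polynomial valuations of degree at most $d$.

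A key geometric observation is that the top-degree component must vanish. Indeed, $h_K(\cdot,-1)$ depends on $K$ only through its lower envelope, so $h_{K+t[0,e_{n+1}]}(\cdot,-1) = h_K(\cdot,-1)$ for every $t\geq 0$ and hence $\tilde Z(K+t[0,e_{n+1}]) = \tilde Z(K)$. Applying this identity to $sK$ in place of $K$ and matching powers of $s>0$ using the $k$-homogeneity of $\tilde Z_k$ gives $\tilde Z_k(K+t[0,e_{n+1}])=\tilde Z_k(K)$ separately for every $k$. For $k=n+1+d$, combining this invariance with the classification of top-degree polynomial valuations on $\K^{n+1}$ as integrals $K\mapsto \int_K p\,\d x$ against polynomials $p$ of degree $d$, a first variation argument on $t=0^+$ forces $p\equiv 0$. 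Hence $\tilde Z_{n+1+d}\equiv 0$, and the effective decomposition has components only in degrees $0,\dots,n+d$.

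It remains to extend the decomposition from the sub-cone $\mathcal{G}:=\{h_K(\cdot,-1)+I_\A: K\in\K^{n+1}\}$ to the full space $\cone$. Following the strategy of Khovanski\u{\i}-Pukhlikov, one establishes that for any $f_1,\dots,f_m\in\cone$ and $t_1,\dots,t_m>0$, the map $(t_1,\dots,t_m)\mapsto Z(\sum_i t_i f_i)$ is a polynomial of total degree at most $n+d$; extracting the $k$-homogeneous coefficient in the one-variable specialization $t\mapsto Z(tf)$ then defines $Z_k(f)$, and continuity together with the valuation property of $Z$ propagates the valuation and polynomial properties to each $Z_k$. The main obstacle is precisely this propagation step: for arbitrary $f\in\cone$, which need not belong to $\mathcal{G}$, one has to transfer the polynomial behavior already established on $\mathcal{G}$ via $\tilde Z$. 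The plan is to apply the valuation identity to pairs consisting of $f$ and appropriate auxiliary functions in $\mathcal{G}$, using the closure of $\cone$ under both addition of such functions and addition of affine functions, so that the polynomiality in $t$ transfers inductively. Uniqueness of the $Z_k$ then follows from a Vandermonde argument comparing coefficients in $t$.
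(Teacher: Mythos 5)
Your reduction to a valuation on $\KN$ via $K\mapsto Z(h_K(\cdot,-1)+I_\A)$ and the observation that the degree-$(n+1+d)$ component must vanish are both correct, and the first step is exactly the paper's. For the vanishing, the paper takes a slightly leaner route (it extracts a translation-invariant $(n+1)$-homogeneous piece via Proposition~\ref{proposition:PolynomialTopComponentWRTDualTranslations} and applies Hadwiger's volume characterization to the family $K_R=B_1(0)\times[0,R]$, which all have the same image under $H_\A$) rather than the full integral representation of Theorem~\ref{theorem:TopComponentPolyValBodies}, but your version also works once the first-variation computation is done carefully, say on boxes $K'\times[a,b]$.

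The genuine gap is in the step you flag yourself: the passage from the dense subcone $\mathcal{G}=H_\A(\KN)$ to all of $\cone$. Your plan is to first establish that $t\mapsto Z(tf)$ is a polynomial of degree at most $n+d$ for \emph{every} $f\in\cone$ and then extract $Z_k(f)$ as a coefficient. But that polynomiality is not available at this stage: it is known only on $\mathcal{G}$, and the mechanism you propose for transferring it --- applying the valuation identity against auxiliary functions from $\mathcal{G}$ --- does not obviously work, since $\vee$ and $\wedge$ are neither linear nor compatible with scaling in a way that would let polynomial dependence in $t$ propagate. The polynomiality of $t\mapsto Z(tf)$ for arbitrary $f\in\cone$ is a consequence of the theorem, not a usable intermediate lemma.

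The paper sidesteps the issue entirely (Proposition~\ref{prop:hom_dec}) by reversing the order of quantification. Fix $0<t_0<\dots<t_{n+1+d}$ and let $c_{ki}$ be the Vandermonde inverse coefficients; define $Z_k\coloneqq\sum_i c_{ki}\,Z^{t_i}$ on all of $\cone$, where $Z^t(f)=Z(tf)$. Each $Z^{t_i}$, and hence each $Z_k$, is already a continuous valuation on $\cone$ that is polynomial of degree at most $d$ (continuity of $f\mapsto t_i f$ is Lemma~\ref{lemma:cont_add}, and $Z^{t_i}(f+\ell)=Z(t_i f+t_i\ell)$ is polynomial in $\ell$), so no polynomiality of $t\mapsto Z(tf)$ needs to be proved in advance. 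What remains is to check that each $Z_k$ is $k$-homogeneous and that $\sum_k Z_k=Z$; both identities hold on the image of $H_\A$ by the convex-body decomposition you already obtained, and they propagate to $\cone$ by density (Lemma~\ref{lemma:dual_density}) and continuity, i.e.\ by the injectivity statement in Lemma~\ref{lemma:injective}. This is the ingredient missing from your proposal.
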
\noindent
Let $\PV_{d,k}(\cone, V)$ denote the subspace of $k$-homogeneous valuations. Theorem \ref{maintheorem:HomogeneousDecomposition} thus shows that we have a direct sum decomposition
\begin{align*}
	\PV_{d}(\cone,V)=\bigoplus_{k=0}^{d+n}\PV_{d,k}(\cone,V).
\end{align*}

In \cite{GoodeyWeilDistributionsvaluations1984}, Goodey and Weil introduced a natural correspondence between continuous and translation invariant valuations on the family of convex bodies and suitable distributions on products of the unit Euclidean sphere $\s^{n-1}$. This idea was adapted by the first author in \cite{Knoerrsupportduallyepi2021, KnoerrUnitarily2021, KnoerrSmoothvaluationsconvex} to interpret valuations on $\fconvf$ as distributions on products of $\R^n$. In this work, we use the same approach for polynomial valuations on $\Gamma$. Let $\cone^{\infty}\subset \Conv(\R^n,\R)\cap C^\infty(\R^n)$ denotes the subspace of all functions $f\in \Conv(\R^n,\R)\cap C^\infty(\R^n)$ such that $f+I_\A\in \cone$. 
\begin{theorem}
	\label{maintheorem:GW}
	Let $V$ be a complete locally convex vector space and assume that $V$ admits a continuous norm.	For every $Z\in \PV_{d,k}(\cone,V)$ there exists a unique symmetric distribution $\GW(Z):C^\infty((\R^n)^k)\rightarrow V$ with compact support such that
	\begin{align}
		\label{eq:characterizingPropertyGW}
		\GW(Z)[f\otimes\dots\otimes f]=Z(f+I_\A)\quad \text{for}~f\in \cone^{\infty}.
	\end{align}
	Moreover, the support of this distribution is contained in the intersection of the interior of $\Oc^k\subset (\R^n)^k$ with the diagonal $\{(x,\dots,x)\in (\R^n)^k: x\in\R^n\}$.
\end{theorem}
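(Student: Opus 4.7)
The plan is to adapt the Goodey--Weil construction, as developed for dually epi-translation invariant valuations on convex functions in \cite{Knoerrsupportduallyepi2021, KnoerrSmoothvaluationsconvex}, to the present polynomial setting, replacing the role of affine translation invariance by controlled polynomial dependence.

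\textbf{Step 1 (polarization).} For $f_1,\dots,f_k\in\cone^\infty$ and small $t_1,\dots,t_k>0$, the function $\sum_i t_if_i+I_\A\in\cone$ by the cone axioms. Using the valuation identity together with Theorem~\ref{maintheorem:HomogeneousDecomposition} (a polynomial McMullen--Khovanski\u{\i}--Pukhlikov style argument), I would show that the map $(t_1,\dots,t_k)\mapsto Z(\sum_i t_if_i+I_\A)$ is polynomial in the $t_i$. The $k$-homogeneity of $Z$ forces the pure coefficient of $t_1\cdots t_k$ to isolate a symmetric $k$-linear form $\bar Z:(\cone^\infty)^k\to V$ with $\bar Z(f,\dots,f)=Z(f+I_\A)$.

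\textbf{Step 2 (extension to a distribution).} The hypothesis that $V$ admits a continuous norm reduces equicontinuity of $\bar Z$ to the scalar case. Combining continuity of $Z$ on $\cone$ (in the epi-topology) with the observation that on an open neighborhood of $\Oc$ the epi-topology and $C^0$-topology coincide for functions in $\cone^\infty$, I would then follow the argument of \cite[Section~4]{Knoerrsupportduallyepi2021}: every $\phi\in C_c^\infty(\R^n)$ can be written, on any prescribed compact set, as a difference of two smooth strictly convex functions (by adding a sufficiently convex quadratic). This yields local $C^m$-bounds for $\bar Z$ and thus a continuous extension of $\bar Z$ to a symmetric $k$-linear form on $C_c^\infty(\R^n)^k$. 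The (vector-valued) Schwartz kernel theorem then produces a unique symmetric distribution $\GW(Z)$ on $(\R^n)^k$ with $\GW(Z)[f_1\otimes\cdots\otimes f_k]=\bar Z(f_1,\dots,f_k)$, which satisfies \eqref{eq:characterizingPropertyGW} by construction.

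\textbf{Step 3 (support).} The containment of $\supp\GW(Z)$ in the diagonal follows from the valuation identity applied to pairs $f,g\in\cone^\infty$ whose ``nontrivial regions'' are separated: on disjoint supports, $f\vee g$ and $f\wedge g$ decompose additively modulo affine corrections, which, through the polarization formula, forces all mixed polarizations in disjoint variables to vanish. The containment in $\interior(\Oc)^k$ uses that $I_\Oc\in\cone$: any modification of $f\in\cone^\infty$ supported away from $\interior(\Oc)$ can be absorbed into the $I_\Oc$-tail without altering $Z(f+I_\A)$, which upon polarization shows $\GW(Z)$ vanishes on test functions whose supports miss $\interior(\Oc)^k$. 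Uniqueness is automatic, since \eqref{eq:characterizingPropertyGW} together with symmetry and multilinearity pins down $\GW(Z)$ on the dense subspace of $C^\infty((\R^n)^k)$ spanned by symmetric tensor products.

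The main obstacle will be the continuity estimate in Step~2. Transferring continuity of $Z$ in the epi-topology on $\cone$ to the $C^m$-topology on $\cone^\infty$ is delicate because the polynomial degree $d$ enters the polarization nontrivially, and an $a\ priori$ bound on $\bar Z$ has to be obtained uniformly over balls in the affine direction; this is exactly where the continuous-norm hypothesis on $V$ is essential, since it allows a reduction to the scalar estimates of \cite{Knoerrsupportduallyepi2021} applied coordinate-wise in a norming family of continuous seminorms.
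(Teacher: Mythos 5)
Your Steps 1 and 2 track the paper's construction reasonably closely: polarize $Z$ to a symmetric $k$-linear $\overline{Z}$, lift it to a multilinear functional on $C_c^\infty(\R^n)^k$ by writing test functions as differences of suitably convex functions (the paper uses multiples of $h_{B_1(0)}(\cdot,-1)=\sqrt{1+|x|^2}$ rather than quadratics, precisely because quadratics need not lie in a general $(\A,\Oc)$-cone, e.g.\ in $\fconvl$), and then invoke the Schwartz kernel theorem. Your Step 3 identification of the diagonal and of $\Oc^k$ as containing the support is also in the spirit of the paper.

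However, there is a genuine gap, and it sits exactly at the heart of the theorem. You never address \emph{compactness} of $\supp\GW(Z)$, nor the containment in the \emph{interior} of $\Oc^k$ rather than the closed set $\Oc^k$. Your Step 3 argument with the $I_\Oc$-tail only yields $\supp\GW(Z)\subset\Oc^k$ (a closed, possibly unbounded set), which is what the paper records as a preliminary Corollary. The passage from this to a compact subset of $(\interior\Oc)^k$ is the new contribution here: the paper proves it by contradiction, producing sequences $f_j\in\cone$ that epi-converge to $I_\Oc$ while carrying localized bumps $\phi_j$ near escaping points $x_j$ (two cases: $|x_j|\to\infty$, or $x_j\to x_0\in\partial\Oc$), and using $Z(I_\Oc)=0$ (a consequence of $k$-homogeneity with $k\ge 1$) to force $\GW(Z)[\phi_j^{\otimes k}]\to 0$, contradicting normalization. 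It is precisely in this step that the continuous-norm hypothesis on $V$ is essential, not, as you claim, in Step 2: the estimate in Step 2 works with any continuous seminorm and any locally convex $V$. Without compactness you also cannot assert \eqref{eq:characterizingPropertyGW} ``by construction'': the kernel theorem gives a distribution defined on $C_c^\infty$, whereas the functions in $\cone^\infty$ are not compactly supported, so one must first establish compact support and then prove that the extended distribution agrees with $\overline{Z}(f_1+I_\A,\dots,f_k+I_\A)$ on $\cone^\infty$, which requires a separate limiting argument with cutoffs (the paper's Theorem~\ref{theorem:RelationGWandValuation}). As written, your proposal proves a weaker statement than the theorem asserts.
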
\noindent
We call $\GW(Z)$ the \emph{Goodey-Weil distribution} of $Z\in\PV_{d,k}(\cone,V)$.

\begin{remark_example}
	The distribution $\GW(Z)$ can still be defined even if $V$ does not admit a continuous norm. However, the support of this distribution is not necessarily compact, as illustrated by the measure-valued valuation discussed in the following example. Given a closed convex set $\Oc\subset\R^n$ with non-empty interior let $\mathcal{M}(\mathrm{int}\Oc)\coloneqq C_c(\mathrm{int}\Oc)'$ denote the space of signed Radon measures on $\mathrm{int}\Oc$ equipped with the weak*-topology. We consider the $(\Oc,\R^n)$-cone
\begin{align*}
	\Conv(\Oc;\R)\coloneqq \{f\in\fconv: \dom f=\Oc\}
\end{align*}
and the Dirichlet energy 
\begin{align*}
	D:\Conv(\Oc;\R)&\rightarrow\mathcal{M}(\mathrm{int}\Oc)\\
	f&\mapsto \left[B\mapsto \int_B|\nabla f(x)|^2dx\right].
\end{align*}
Then, $D$ defines a continuous valuation on $\Conv(\Oc;\R)$ that is polynomial of degree at most $2$. Further examples can be constructed by replacing the Lebesgue measure with certain Monge-Amp\`ere-type operators. For the Hessian measure, this gives rise to variants of the so-called $k$-Hessian energy. See, for example, \cite{CaseWangDirichlet2018}.\\
In this example, as well as in similar cases, the corresponding left-hand side of \eqref{eq:characterizingPropertyGW} is in general not well-defined. In fact, the characterizing property on the support has to be replaced by a slightly weaker notion. Nevertheless, the distribution uniquely determines the original valuation, as we will show in Corollary \ref{cor:non_cont_norm}.
\end{remark_example}

Let $Z=\sum_{k=0}^{n+d}Z_k$ be the decomposition of $Z\in \PV_d(\cone,V)$ into its homogeneous components. We define the \emph{support of $Z$} by
\begin{align*}
	\supp Z\coloneqq \bigcup_{k=1}^{n+d}\Delta_k^{-1}(\supp\GW(Z_k)),
\end{align*}
where $\Delta_k:\R^n\rightarrow (\R^n)^k$, $x\mapsto (x,\dots,x)$, denotes the diagonal embedding. Note that this is a closed subset of $\R^n$ and in fact compact if $V$ admits a continuous norm. Moreover, the support of a $0$-homogeneous valuation is empty by definition. In Proposition \ref{proposition:CharacterizationSupport}, we will provide an alternative characterization of the support. In particular, we will show that if $f, g \in \cone$ coincide on a neighbourhood of the support of $Z\in\PV_d(\cone,V)$, then $Z(f)=Z(g)$. If the support is compact, this allows us to obtain the desired extension property.
\begin{theorem}
	\label{maintheorem:extension}
	Let $V$ be a locally convex vector space that admits a continuous norm. Then for every $(\A,\Oc)$-cone $\cone$ the restriction map
	\begin{align*}
		\PV_d(\cone,V)&\rightarrow\left\{Z\in\PV_{d}(\fconvl,V):\supp Z \subset \mathrm{int}\Oc\right\}\\
		Z&\mapsto \left[f\mapsto Z(f+I_\A)\right]
	\end{align*}
	is a well-defined bijection.
    
More generally, if\, $V$ is an arbitrary locally convex vector space and $Z\in \PV_d(\fconvl,V)$ is a valuation such that $\supp Z$ is compact and contained in $\mathrm{int}\Oc$, then there exists a unique valuation $Z_{\cone}\in \PV_d(\cone,V)$ such that
	\begin{align*}
		Z(f)=Z_\cone(f+I_\A)\quad
	\end{align*}
for every $f\in\fconvl$.
\end{theorem}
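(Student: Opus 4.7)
My overall strategy is to exploit Proposition \ref{proposition:CharacterizationSupport}: since the value $Z(g)$ for $g\in\fconvl$ depends only on the behaviour of $g$ near $\supp Z$, I define the extension by $Z_\cone(f) := Z(g)$ for any $g\in\fconvl$ that agrees with $f$ on a fixed compact convex neighbourhood $B$ of $\supp Z$ with $B\subset \mathrm{int}\Oc$. The first assertion will then follow from the second, combined with the automatic compactness of $\supp Z$ provided by Theorem \ref{maintheorem:GW} whenever $V$ admits a continuous norm, together with the observation that the restriction map lands in the asserted subspace because Theorem \ref{maintheorem:GW} applied to both sides identifies the Goodey-Weil distributions.

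Such a $g$ always exists: since $B\subset \mathrm{int}\Oc\subset \mathrm{int}\dom f$, the restriction $f|_B$ is finite and Lipschitz, so the function $g_f(x) := \sup\{f(x_0)+\langle v, x-x_0\rangle : x_0\in B,\ v\in \partial f(x_0)\}$ is finite, Lipschitz, convex, equal to $f$ on $B$, and lies in $\fconvl$ (the bounded set of pairs $(v,\langle v,x_0\rangle - f(x_0))$ generates a convex body in $\R^{n+1}$ whose support function, restricted to the hyperplane $\{x_{n+1}=-1\}$, is $g_f$). Independence of the choice of extension follows immediately from Proposition \ref{proposition:CharacterizationSupport}. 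Continuity of $Z_\cone$ will be deduced from the fact that epi-convergence $f_n\to f$ in $\cone$ forces locally uniform convergence of $f_n\to f$ on $B$, which in turn yields epi-convergence $g_{f_n}\to g_f$ in $\fconvl$; polynomiality in affine perturbations is immediate from $g_{f+\ell}=g_f+\ell$, so $\ell\mapsto Z_\cone(f+\ell)=Z(g_f+\ell)$ inherits polynomial degree $\leq d$ from $Z$.

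I expect the hardest step to be verifying the valuation identity for $Z_\cone$. Given $f,h,f\vee h,f\wedge h\in\cone$, the natural plan is to pick $g_f,g_h\in\fconvl$ as above and to apply the valuation property of $Z$ on $\fconvl$ to the pair $(g_f,g_h)$, then use Proposition \ref{proposition:CharacterizationSupport} to identify $Z(g_f\vee g_h)$ with $Z_\cone(f\vee h)$ and $Z(g_f\wedge g_h)$ with $Z_\cone(f\wedge h)$, since all these functions agree pairwise on $B$. The maximum $g_f\vee g_h$ always lies in $\fconvl$, but the pointwise minimum of two Lipschitz convex functions is generally not convex, even though the assumption $f\wedge h\in\cone$ gives convexity on $B$. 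A coordinated choice of extensions will therefore be needed — for instance, by arranging $g_f$ and $g_h$ to coincide with a common auxiliary function $g_0\in\fconvl$ outside a slightly larger neighbourhood of $B$, or by enriching the tangent-line construction so that tangent lines of $f\wedge h$ are built into $g_f,g_h$ — so that $g_f\wedge g_h$ becomes globally convex and hence lies in $\fconvl$.

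Injectivity of the restriction map, needed for the bijection in the first assertion, is handled through the Goodey-Weil framework. If $Z\in \PV_d(\cone,V)$ vanishes on every $f+I_\A$ with $f\in\fconvl$, the homogeneous decomposition (Theorem \ref{maintheorem:HomogeneousDecomposition}) combined with the defining identity of Theorem \ref{maintheorem:GW} yields $\GW(Z_k)[f^{\otimes k}]=0$ for all $f\in\cone^{\infty}$; the symmetry of $\GW(Z_k)$ and a standard polarization argument then force $\GW(Z_k)=0$ as a distribution, hence $Z_k=0$ by the uniqueness statement for Goodey-Weil distributions alluded to in Corollary \ref{cor:non_cont_norm}, and so $Z=0$.
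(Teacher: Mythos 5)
Your overall outline matches the paper's strategy: fix a compact convex neighbourhood of $\supp Z$ inside $\mathrm{int}\,\Oc$, replace $f\in\cone$ by an element of $\fconvl$ agreeing with $f$ on that neighbourhood, define the extension via Proposition \ref{proposition:CharacterizationSupport}, and check the polynomial, valuation and continuity properties. Uniqueness from density of $f+I_\A$, $f\in\fconvl$, and the ``in particular'' direction via Theorem \ref{maintheorem:GW} are also as in the paper.

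The genuine gap is exactly where you flag it: the valuation identity for $Z_\cone$. Your tangent-hull construction $g_f$ is \emph{not} lattice-compatible, and the two proposed fixes do not close the gap. If you force $g_f$ and $g_h$ to agree with a common auxiliary $g_0$ outside a neighbourhood of $B$, you still have no control on the transition region, and $g_f\wedge g_h$ need not be convex there; taking a maximum with an outer cone $c(|x|-R)$ gives $g_f^c\wedge g_h^c=(g_f\wedge g_h)\vee c(|x|-R)$ by the lattice identity, but a maximum of a non-convex function with a convex one is still generally non-convex. The paper's resolution is to build the extension on the \emph{Fenchel dual} side, via Lemma \ref{lemma:ReplaceFunctionByBody}: one sets $K_f=\epi(f^*)\cap Q$ for an explicit box $Q$, and crucially one may choose the \emph{same} box $Q$ simultaneously for $f$, $h$, $f\vee h$, $f\wedge h$ (take $c$ to be the common bound $\max(\sup_A|f|,\sup_A|h|)$). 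Since intersection with a fixed box commutes with unions and intersections of epigraphs, and since $f\wedge h$ convex implies $\epi(f^*)\cup\epi(h^*)$ convex with $\epi((f\vee h)^*)=\epi(f^*)\cup\epi(h^*)$ and $\epi((f\wedge h)^*)=\epi(f^*)\cap\epi(h^*)$, one obtains $K_f\cup K_h=K_{f\vee h}$ and $K_f\cap K_h=K_{f\wedge h}$ as convex bodies. Translating through support functions, $h_{K_f}(\cdot,-1)\vee h_{K_h}(\cdot,-1)=h_{K_{f\vee h}}(\cdot,-1)$ and $h_{K_f}(\cdot,-1)\wedge h_{K_h}(\cdot,-1)=h_{K_{f\wedge h}}(\cdot,-1)$, and the valuation identity for $Z_\cone$ drops out of the valuation property of $Z$ on $\fconvl$. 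Without some such coordinated, lattice-compatible choice, the step cannot be completed, and your $g_f$ would have to be replaced by essentially the paper's $h_{K_f}(\cdot,-1)$.

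Two smaller remarks. Injectivity of the restriction map does not require the full Goodey--Weil machinery: it follows directly from the density of $\{f+I_\A: f\in\fconvl\}$ in $\cone$ (Lemma \ref{lemma:dual_density}) together with continuity, which is how the paper handles it in Corollary \ref{cor:ExtensionBijecitveCorrespondence}. For continuity of the extension, your claim that uniform convergence of $f_j\to f$ on $B$ implies epi-convergence $g_{f_j}\to g_f$ is plausible but not immediate, since subdifferentials are only upper semicontinuous under uniform convergence; the paper sidesteps this by arguing by contradiction, using that the associated bodies $K_{f_j}$ lie in a fixed bounded region (because the truncation parameters are uniform once $\sup_A|f_j|$ is bounded) and applying the Blaschke selection theorem to extract a convergent subsequence whose limit must agree with $f$ on a neighbourhood of $\supp Z$.
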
\noindent
In \cite[Corollary 18]{KnoerrUlivelli2023}, the authors proved this result in the special case $\cone=\fconvf$, $V=\R$, $d=0$ under the additional assumptions that the valuations are homogeneous of degree $n$. Theorem \ref{maintheorem:extension} encodes the generality of this behaviour, entailing the following consequence. 
\begin{corollary}
    \label{maincorollary:extensionRealValuations}
	Let $Z:\fconvl \rightarrow\R$ be a continuous polynomial valuation of degree at most $d$. Then $Z$ extends uniquely by continuity to a continuous valuation on $\Conv(\R^n,\R)$.
\end{corollary}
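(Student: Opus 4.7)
The plan is to deduce the corollary as a direct application of Theorem \ref{maintheorem:extension} with the choice $\A=\Oc=\R^n$ and $\cone=\Conv(\R^n,\R)=\fconvf$. First, I would verify that $\fconvf$ is an $(\R^n,\R^n)$-cone: it is closed under positive scalar multiplication and pointwise addition; every $f\in\fconvf$ satisfies $\dom f=\R^n$; the indicator $I_{\R^n}\equiv 0$ trivially lies in $\fconvf$; and for every $K\in\mathcal{K}^{n+1}$, the function $h_K(\,\cdot\,,-1)$ is finite and convex on $\R^n$, so it belongs to $\fconvf$. This is routine but must be recorded.

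Next, starting from $Z\in \PV_d(\fconvl,\R)$, I would apply the homogeneous decomposition of Theorem \ref{maintheorem:HomogeneousDecomposition} to write $Z=\sum_{k=0}^{n+d}Z_k$ with $Z_k\in \PV_{d,k}(\fconvl,\R)$. Since $V=\R$ admits the continuous norm $|\cdot|$, Theorem \ref{maintheorem:GW} yields, for each $k\geq 1$, a Goodey-Weil distribution $\GW(Z_k)$ with \emph{compact} support. Consequently,
\[
\supp Z=\bigcup_{k=1}^{n+d}\Delta_k^{-1}(\supp\GW(Z_k))
\]
is a finite union of compact subsets of $\R^n$, hence compact. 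The inclusion $\supp Z\subset \mathrm{int}\,\R^n=\R^n$ is automatic. Therefore Theorem \ref{maintheorem:extension} provides a unique $Z_{\fconvf}\in\PV_d(\fconvf,\R)$ with $Z(f)=Z_{\fconvf}(f+I_{\R^n})=Z_{\fconvf}(f)$ for every $f\in\fconvl$, which is the sought extension (continuous, polynomial of degree at most $d$, hence in particular a valuation).

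The only remaining point is to upgrade the uniqueness statement of Theorem \ref{maintheorem:extension} to the one claimed here (``unique by continuity''). For this I would invoke the density of $\fconvl$ in $\fconvf$ with respect to epi-convergence: any $f\in\fconvf$ is the pointwise (and epi-)limit of maxima of finitely many of its affine minorants, and each such maximum equals $h_K(\,\cdot\,,-1)$ for a suitable polytope $K\in\mathcal{K}^{n+1}$. Two continuous extensions to $\fconvf$ must then coincide.

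No genuinely hard step arises: the heavy lifting has already been done by Theorems \ref{maintheorem:HomogeneousDecomposition}, \ref{maintheorem:GW}, and \ref{maintheorem:extension}. The only delicate point to keep in mind is that compactness of $\supp Z$ — the hypothesis that unlocks Theorem \ref{maintheorem:extension} — comes \emph{for free} from the assumption that $V=\R$ admits a continuous norm; without this, as the remark on the Dirichlet energy shows, one cannot expect an unconditional extension.
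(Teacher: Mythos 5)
Your proposal is correct and follows the same route the paper intends: the paper states the corollary as a direct consequence of Theorem \ref{maintheorem:extension} applied with $\A=\Oc=\R^n$ and $\cone=\fconvf$, which is precisely what you do, with the compactness of $\supp Z$ coming from Theorem \ref{maintheorem:GW} (since $\R$ admits a continuous norm) and uniqueness from the density of $\fconvl$ in $\fconvf$ (Lemma \ref{lemma:dual_density}).
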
\noindent
Note that Theorem \ref{maintheorem:Lipschitz} is a direct consequence of this result.
Let us remark that Theorem \ref{maintheorem:extension} can be used to directly extend various characterization results for dually epi-translation invariant valuations on $\fconvf$ (that is, polynomial valuations of degree at most $0$) to more general cones. This applies, in particular, to the classification of rotation invariant valuations in $\PV_0(\fconvf,\R)$ by Colesanti, Ludwig, and Mussnig \cite{ColesantiEtAlHadwigertheoremconvex2020}. The necessary support restrictions can be found in \cite{KnoerrSingularvaluationsHadwiger2022}. We leave the details to the interested reader.\\

For a given valuation with compact support, our construction give rise to the following extension.
\begin{corollary}
    \label{maincorollary:maximalExtension}
    Let $V$ be a locally convex vector space and $Z \in \PV_d(\fconvl, V)$ a valuation with compact support. Then $Z$ extends uniquely by continuity to a continuous valuation on 
    \begin{align*}
        \cone_Z:=\{f \in \fconv\colon \supp Z \subset \interior (\dom f)\}.
    \end{align*}
\end{corollary}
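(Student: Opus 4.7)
The plan is to cover $\cone_Z$ by the family of $(\R^n,\Oc)$-cones
$\cone_\Oc:=\{g\in\fconv:\Oc\subset\dom g\}$,
where $\Oc$ ranges over compact convex sets with $\supp Z\subset\interior\Oc$, and to glue together the unique extensions that the ``more generally'' clause of Theorem \ref{maintheorem:extension} furnishes on each such $\cone_\Oc$.

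First I would verify that $\cone_\Oc$ is an $(\R^n,\Oc)$-cone: the inclusion $\Oc\subset\dom g$ is preserved under sums and positive dilations since $\dom(g_1+g_2)=\dom g_1\cap\dom g_2$; the indicators $I_{\R^n}$ and $I_\Oc$ obviously belong to $\cone_\Oc$; and $h_K(\cdot,-1)+I_{\R^n}$ is finite on $\R^n\supset\Oc$. Because $\supp Z\subset \interior\Oc$ by construction, Theorem \ref{maintheorem:extension} yields a unique $Z_\Oc\in\PV_d(\cone_\Oc,V)$ extending $Z$ (using $\fconvl\subset\cone_\Oc$). Consistency of the family $\{Z_\Oc\}$ follows by shrinking: for any two admissible choices $\Oc_1,\Oc_2$, compactness of $\supp Z$ allows one to sandwich a third admissible $\Oc_3$ inside $\Oc_1\cap\Oc_2$, so $\cone_{\Oc_1},\cone_{\Oc_2}\subset\cone_{\Oc_3}$; both $Z_{\Oc_i}$ and $Z_{\Oc_3}|_{\cone_{\Oc_i}}$ are then continuous polynomial extensions of $Z|_{\fconvl}$ of degree at most $d$ to $\cone_{\Oc_i}$, and must agree by the uniqueness clause of Theorem \ref{maintheorem:extension}. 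This permits the unambiguous definition $\tilde Z(f):=Z_\Oc(f)$ for any $\Oc$ with $f\in\cone_\Oc$, and such an $\Oc$ exists for every $f\in\cone_Z$ because $\supp Z$ is a compact subset of the open convex set $\interior(\dom f)$.

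The valuation and polynomial properties of $\tilde Z$ transfer from those of each $Z_\Oc$: given $f,g\in\cone_Z$ with $f\vee g,f\wedge g$ also in $\cone_Z$, one picks a compact convex $\Oc\subset\interior(\dom f)\cap\interior(\dom g)$ containing $\supp Z$ in its interior, which places all four functions in $\cone_\Oc$; affine perturbations do not alter the domain, so the polynomial property lifts analogously. Uniqueness of $\tilde Z$ is automatic: any other continuous polynomial extension $\tilde Z'$ restricts to a valuation in $\PV_d(\cone_\Oc,V)$ extending $Z|_{\fconvl}$, which by Theorem \ref{maintheorem:extension} must equal $Z_\Oc$, and hence $\tilde Z'=\tilde Z$ on $\cone_Z$.

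The main technical point is continuity of $\tilde Z$. Given $f_k\to f$ epi-convergently in $\cone_Z$, I would fix $\Oc$ with $f\in\cone_\Oc$ and $\Oc\subset\interior(\dom f)$ and invoke the classical fact that epi-convergent convex functions converge uniformly on compact subsets of the interior of the limit domain: this forces $f_k$ to be finite and bounded on $\Oc$ for $k$ large, hence eventually in $\cone_\Oc$. Continuity of $Z_\Oc$ on $\cone_\Oc$ then gives $\tilde Z(f_k)=Z_\Oc(f_k)\to Z_\Oc(f)=\tilde Z(f)$. This interaction between the precise topology of epi-convergence, the compactness of $\supp Z$, and the boundary behaviour of $\dom f$ is the step where the argument is most delicate; the gluing and uniqueness aspects are then routine applications of Theorem \ref{maintheorem:extension}.
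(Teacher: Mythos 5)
Your proof is correct and follows essentially the same strategy as the paper: you cover $\cone_Z$ by $(\R^n,\Oc)$-cones, apply the ``more generally'' clause of Theorem~\ref{maintheorem:extension} to each, glue the resulting extensions via uniqueness, and then check the valuation, polynomial, and continuity properties by choosing a suitable common cone (the paper parametrizes by $\Oc=\mathrm{conv}(\supp Z)+B_\delta(0)$ rather than by arbitrary admissible compact convex $\Oc$, but this is an inessential difference). The key step of passing to uniform convergence on a compact neighbourhood of $\supp Z$ in $\interior(\dom f)$ to verify continuity is exactly the paper's argument as well.
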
\noindent
Note that this extension is maximal in the sense that if $V$ admits a continuous norm and $Z$ extends to a continuous valuation on an $(\R^n,\Oc^n)$-cone $\cone$, then necessarily $\cone\subset\cone_Z$.

As an application, we provide the following characterization for the elements of $\PV_{d,n+d}(\cone, \R)$, that is the top-degree component of the homogeneous decomposition in Theorem \ref{maintheorem:HomogeneousDecomposition}. The case $d=0$, $\cone=\fconvf$ was previously established in \cite{ColesantiEtAlhomogeneousdecompositiontheorem2020}. We identify the family of real-valued polynomials in $n+1$ variables of degree at most $d$ with the space of symmetric tensors of rank $d$ on $\Rn \times \R$, which we denote by $\Sym^d(\R^{n}\times\R)$.
\begin{theorem}
	\label{maintheorem:CharacterizationTopComponent}
	For every $Z\in\PV_{d,n+d}(\cone,\R)$ there exists a unique function $\phi\in C_c(\R^n,\Sym^d(\R^{n}\times\R))$ with $\supp\phi \subset\mathrm{int}\Oc$ such that
	\begin{align*}
		Z(f)=\int_{\Rn \times \Rn} \phi(x)[y,f(x)]d\Theta_0(f;(x,y)).
	\end{align*}
	Conversely, the right-hand side of this equation defines an element of $\PV_{d,n+d}(\cone,\R)$ for every such function $\phi$.
\end{theorem}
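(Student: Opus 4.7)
The plan is to translate $Z$ into its Goodey--Weil distribution and to exploit the interplay between the top homogeneity $n+d$, the polynomial degree at most $d$, and the diagonal support in order to extract and identify the coefficient function $\phi$. Applying Theorem \ref{maintheorem:GW} to $Z \in \PV_{d,n+d}(\cone,\R)$ produces a compactly supported symmetric distribution $T := \GW(Z)$ on $(\R^n)^{n+d}$ whose support lies in the diagonal and is contained in $\mathrm{int}\,\Oc^{n+d}$; its restriction to tensor powers $f \otimes \cdots \otimes f$ of smooth convex functions recovers $Z(f + I_\A)$. Because this support is compact inside $\mathrm{int}\,\Oc$, Theorem \ref{maintheorem:extension} permits me to enlarge $\cone$ freely and to reduce the construction to $f \in \cone^\infty$, using continuity to propagate the result at the end.

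The core step is to reconstruct the integral formula for smooth $f$. Writing $Z(f+\ell)$ for affine $\ell \in \mathrm{Aff}(n)$ and using that it is a polynomial of degree at most $d$ in $\ell$, the multilinear expansion
\begin{align*}
T[(f+\ell)^{\otimes(n+d)}] = \sum_{j=0}^{n+d} \binom{n+d}{j}\, T[\ell^{\otimes j} \otimes f^{\otimes(n+d-j)}]
\end{align*}
forces $T[\ell^{\otimes j} \otimes f^{\otimes(n+d-j)}] = 0$ for $j>d$. For each remaining $j \leq d$, the exponent $n+d-j \geq n$ on $f$ is large enough that, after polarizing in the directions transverse to the diagonal, the pairing produces exactly the $n$-th order Hessian operator underlying the Monge--Amp\`ere measure $\MA(f)$, together with $j$ polynomial factors in $(\nabla f(x), f(x))$ coming from the $\ell$-dependence. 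Performing integration by parts along the diagonal, I recognise each summand as an integral against the measure $\Theta_0(f;\cdot)$ on $\R^n \times \R^n$, and the resulting coefficient data assembles into a symmetric tensor field $\phi \in C_c(\R^n,\Sym^d(\R^n \times \R))$. The order estimates available for $T$ deliver the continuity of $\phi$; the support constraint on $T$ yields $\supp \phi \subset \mathrm{int}\,\Oc$; and continuity of $Z$ together with the weak continuity of $f \mapsto \Theta_0(f;\cdot)$ in the epi-topology propagates the formula from smooth $f$ to arbitrary $f \in \cone$.

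The converse direction and uniqueness are then short. Directly, the right-hand side defines a continuous valuation because of the known valuation property of $f \mapsto \Theta_0(f;\cdot)$ and its weak continuity; the $(n+d)$-homogeneity follows from the $n$-homogeneity of $\Theta_0(f;\cdot)$ combined with the $d$-homogeneity of $f \mapsto \phi(x)[y,f(x)]$; and the polynomial degree bound is immediate from the polynomial character of the integrand in $(y,f(x))$. Uniqueness of $\phi$ follows from the uniqueness of $\GW(Z)$ in Theorem \ref{maintheorem:GW} together with a standard spanning argument for tensor powers of smooth convex functions on $(\R^n)^{n+d}$. The main obstacle I expect is the integration-by-parts identity converting the distributional pairing of $T$ into the measure-theoretic integral against $\Theta_0$: this requires carefully matching the transverse order of $T$ to the diagonal with the order of the Monge--Amp\`ere operator implicit in $\Theta_0$, and it is precisely at this step that the top-homogeneity and the polynomial degree constraint must be used together.
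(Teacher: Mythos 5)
There is a genuine gap at the heart of the proposal. The key step in your argument is the passage from the Goodey--Weil distribution $T=\GW(Z)$, supported on the diagonal of $(\R^n)^{n+d}$, to the integral formula against the Hessian measure $\Theta_0(f;\cdot)$: you propose to ``polarize in the directions transverse to the diagonal'' so that ``the pairing produces exactly the $n$-th order Hessian operator underlying the Monge--Amp\`ere measure'' and then to ``perform integration by parts along the diagonal'' to land on $\Theta_0$. That is asserted, not proved, and you flag it yourself as ``the main obstacle''. But this is precisely the hard content of the theorem: nothing in Theorems~\ref{maintheorem:GW} or \ref{maintheorem:extension} tells you that a compactly supported, diagonally supported symmetric distribution of transverse order $n$ arising from an $(n+d)$-homogeneous polynomial valuation must give rise to a Monge--Amp\`ere-type integral. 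In effect your plan asks to reprove Theorem~\ref{theorem:ClassificationTopDegVConv} (the Colesanti--Ludwig--Mussnig classification of $\PV_{0,n}(\fconvf,\R)$) from scratch inside this proof, and that derivation is not carried out.

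The paper sidesteps this entirely. After reducing to $\fconvf$ via Theorem~\ref{maintheorem:extension}, it decomposes $Z(f+\ell)=\sum_{i=0}^d Y_i(f)[\ell]$ using Proposition~\ref{proposition:PolynomialTopComponentWRTDualTranslations}, which gives $Y_d\in\PV_{0,n}(\fconvf,\Sym^d(\mathrm{Aff}(n)))$, i.e.\ a \emph{dually epi-translation invariant} $n$-homogeneous valuation. To this one can apply the already-known Theorem~\ref{theorem:ClassificationTopDegVConv} componentwise to extract $\phi$, with the support control coming from Lemma~\ref{lemma:supportTranslationPolynomials} and Lemma~\ref{lemma:TopDegSupportDensity}. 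Then the candidate valuation $\tilde Z(f)=\int\phi(x)[(y,f(x))]\,d\Theta_0(f;(x,y))$ is shown to agree with $Z$ not by continuity propagation from smooth $f$, but by observing that $Z-\tilde Z$ lies in $\PV_{d-1,n+d}(\fconvf,\R)$, which is trivial by the homogeneous decomposition (Theorem~\ref{maintheorem:HomogeneousDecomposition}). Your converse and uniqueness remarks are essentially in line with the paper, but without the identification step the main direction of the proof is incomplete: you should either invoke Theorem~\ref{theorem:ClassificationTopDegVConv} as a black box (after isolating $Y_d$ as the paper does) or actually carry out the distributional reconstruction of $\Theta_0$, which is a substantially harder task and not sketched here.

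A minor additional point: your binomial expansion $T[(f+\ell)^{\otimes(n+d)}]=\sum_j\binom{n+d}{j}T[\ell^{\otimes j}\otimes f^{\otimes(n+d-j)}]$ requires pairing $T$ with non-compactly supported arguments $\ell$; this is harmless because $T$ has compact support, but it should be justified by a cut-off argument rather than taken for granted.
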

Here, $\Theta_0$ denotes the Hessian measure of order $0$, which is a non-negative Borel measure on $\R^n\times\R^n$, compare \cite{ColesantiEtAlHomogeneous2024}. If $f$ is smooth on the interior of its domain, then $Z(f)=\int_{\dom f} \phi(x)[\nabla f(x),f(x)]\det(D^2f(x))dx$, where $D^2f$ denotes the Hessian matrix of $f$.
\subsection{Plan of the article}
In Section \ref{sec:preliminaries}, we recall some basic notions on convex functions and collect some technical results. We also review the necessary results about polynomial valuations on convex bodies. In Section \ref{sec:homogeneous}, we establish Theorem \ref{maintheorem:HomogeneousDecomposition}. The Goodey-Weil distributions and their properties are treated in Section \ref{sec:GW}, including the proof of  Theorem \ref{maintheorem:GW}. Finally, in Section \ref{sec:extensions}, we obtain the extension Theorem \ref{maintheorem:extension} and Corollary \ref{maincorollary:maximalExtension}. Section \ref{sec:top_degree} is devoted to the representation result in Theorem \ref{maintheorem:CharacterizationTopComponent}, while we provide a geometric interpretation of our main results induced by the Fenchel-Legendre transform in Section \ref{sec:dualr_results}.

\section{Preliminaries}\label{sec:preliminaries}

When working in $\Rn$, we consider the standard scalar product $\langle \cdot,\cdot \rangle$ and the corresponding Euclidean norm $|\cdot|$. We do not specify the dimension, as it will be clear from the context. For $r > 0$, $x \in \Rn,$ we denote by $B_r(x)$ the closed Euclidean ball of radius $r$ centred at $x$. The interior of a set $A \subset \Rn$ is denoted by $\interior A$. For basic facts on convex geometry, we refer to Schneider's monograph \cite{SchneiderConvexBodiesBrunn2013}. For a comprehensive background on convex functions, we refer to the monograph by Rockafellar and Wets \cite{{RockafellarWetsVariationalAnalysis1998}}. We now collect some results that will be relevant to the constructions in this article.

\subsection{Convex functions}
    
    The spaces of convex functions considered below are all subsets of the space $\fconv$, and we will equip this space with the topology induced by epi-convergence. Here, a sequence $(f_j)_j$ in $\fconv$ epi-converges to $f\in\fconv$ if the following conditions hold for every $x\in\R^n$:
        \begin{enumerate}[itemsep=0pt]
            \item For every sequence $(x_j)_j$ in $V$ converging to $x$, $f(x)\le \liminf\limits_{j\rightarrow\infty}f_j(x_j)$,
            \item There exists a sequence $(x_j)_j$ converging to $x$ such that $f(x)= \lim\limits_{j\rightarrow\infty}f_j(x_j)$.
        \end{enumerate}
    Let us remark that the topology induced by epi-convergence is metrizable (compare, e.g., \cite[Theorem 7.58]{RockafellarWetsVariationalAnalysis1998}). We will require the following characterization of epi-convergence.
	\begin{lemma}[\cite{AttouchWetsEpigraphical1989} Theorem 7.17]
		\label{lemma:EpiConvergencePointwiseConvergence}
		Let $f\in\fconv$ be a function such that $\dom f$ has non-empty interior, $(f_j)_j$ a sequence in $\fconv$. The following are equivalent:
		\begin{enumerate}[itemsep=0pt]
			\item The sequence $(f_j)_j$ epi-converges to $f$,
			\item The sequence $(f_j)_j$ converges pointwise to $f$ on a dense subset,
			\item The sequence $(f_j)_j$ converges uniformly to $f$ on all compact subsets that do not contain a boundary point of $\dom u$.
		\end{enumerate} 
	\end{lemma}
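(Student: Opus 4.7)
The plan is to prove the equivalence via the cycle $(3)\Rightarrow(2)$, $(2)\Rightarrow(3)$, $(3)\Rightarrow(1)$, $(1)\Rightarrow(3)$, treating $(3)$ as the natural pivot. The implication $(3)\Rightarrow(2)$ is trivial: by hypothesis $\interior(\dom f)$ is nonempty (and open), so any countable dense subset of it witnesses $(2)$. The analytic engine for the remaining implications is the classical fact that a pointwise bounded sequence of convex functions on an open subset of $\R^n$ is automatically locally equi-Lipschitz; this follows from the standard Lipschitz estimate for a single convex function inside a ball strictly contained in its domain, applied uniformly in the sequence.

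For $(2)\Rightarrow(3)$, the main step is to promote pointwise convergence on a dense set $D$ to local uniform boundedness of $(f_j)_j$ on compacta inside $\interior(\dom f)$. Around any $x_0\in\interior(\dom f)$ I would choose $n+1$ points $p_0,\dots,p_n\in D$ whose convex hull is an $n$-simplex containing a small ball around $x_0$. Convexity of each $f_j$ yields an upper bound on the simplex in terms of $\max_i f_j(p_i)$, which is eventually bounded by $(2)$. A matching lower bound comes from picking an affine support to $f$ at a nearby interior point and transferring it to $f_j$ up to an $o(1)$ error using $(2)$. Uniform $L^\infty$ bounds then produce local equi-Lipschitz bounds via the standard estimate, and Arzelà--Ascoli upgrades the pointwise convergence on $D$ to uniform convergence on compact subsets of $\interior(\dom f)$.

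For $(3)\Rightarrow(1)$, the liminf condition at $x\in\interior(\dom f)$ follows from the uniform convergence on a neighborhood of $x$, and the constant sequence $x_j=x$ is a recovery sequence. At $x\in\partial(\dom f)$ or $x\notin\dom f$ I would instead fix some $x_0\in\interior(\dom f)$ and set $x_j=x_0+(1-1/j)(x-x_0)\in\interior(\dom f)$; then $f_j(x_j)$ is within $o(1)$ of $f(x_j)$ by $(3)$, and $f(x_j)\to f(x)$ by the one-sided continuity at $t=1^-$ of the convex function $t\mapsto f(x_0+t(x-x_0))$, together with lower semicontinuity of $f$ (with the convention $f(x)=+\infty$ off $\dom f$). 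The liminf inequality at an exterior or boundary point reduces to showing that if $y_k\to x\notin\interior(\dom f)$ and $f_{j_k}(y_k)$ stayed bounded along a subsequence, then convexity along the segment from an interior point $x_0$ would force a bounded value at interior points near $x$, contradicting the value $f(x)$ dictated by lower semicontinuity. The implication $(1)\Rightarrow(3)$ closes the cycle by exactly the simplex argument used in $(2)\Rightarrow(3)$: recovery sequences at the vertices of a simplex inside $\interior(\dom f)$ give local upper bounds, the liminf inequality gives matching lower bounds, and equi-Lipschitz plus Arzelà--Ascoli finishes.

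The step I expect to be the main obstacle is the handling of $\partial(\dom f)$ in $(3)\Rightarrow(1)$. Because lower semicontinuous convex functions can genuinely jump on $\partial(\dom f)$, constructing a recovery sequence is not simply a matter of approaching $x$ from the interior; one must also ensure $f(x_j)\to f(x)$ along that approach. This relies on the radial one-sided continuity of convex functions, a classical but slightly delicate consequence of lower semicontinuity combined with convexity, and it is also the point at which the hypothesis that $\dom f$ has nonempty interior is essential, since otherwise the ``approach from the interior'' is not even available.
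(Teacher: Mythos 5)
The paper does not prove this lemma; it cites it directly from the literature (the Attouch--Wets/Rockafellar--Wets epi-convergence theorem). So there is no in-paper argument to compare against, and your proposal has to stand on its own. The broad strategy (interior uniform convergence via local equi-Lipschitz bounds and Arzel\`a--Ascoli, radial one-sided continuity for boundary recovery sequences) captures the standard machinery correctly. But there is a systematic gap in the way you treat the region \emph{outside} $\cl(\dom f)$, which is an essential part of all three conditions.

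Concretely: condition (3) concerns all compact $K$ disjoint from $\partial(\dom f)$, and such $K$ may lie entirely in $\R^n\setminus\cl(\dom f)$, where $f\equiv+\infty$; there (3) means $\min_K f_j\to\infty$. Your $(2)\Rightarrow(3)$ and $(1)\Rightarrow(3)$ only establish locally uniform convergence inside $\interior(\dom f)$ and never argue the uniform divergence on the outside, which requires a genuinely different argument (e.g., lower bounds from affine minorants combined with pointwise divergence at dense exterior points, or a compactness/contradiction argument using the liminf inequality). Relatedly, your $(3)\Rightarrow(2)$ takes a dense subset of $\interior(\dom f)$, but such a set is not dense in $\R^n$ unless $\cl(\dom f)=\R^n$; you need to supplement it with a dense set of exterior points (where $f_j\to+\infty$ pointwise follows from (3)). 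Two smaller issues in $(3)\Rightarrow(1)$: for $x\notin\cl(\dom f)$ your proposed $x_j=x_0+(1-1/j)(x-x_0)$ does \emph{not} lie in $\interior(\dom f)$ for large $j$ (the line-segment principle only applies when $x\in\cl(\dom f)$); the constant sequence $x_j=x$ is the right choice there. And the liminf step at $x\in\partial(\dom f)$ is phrased as a contradiction from boundedness of $f_{j_k}(y_k)$, but $f(x)$ can be finite at a boundary point, so boundedness of $f_{j_k}(y_k)$ is not itself a contradiction; you need the quantitative version, namely convexity along the segment $[x_0,y_k]$ plus interior uniform convergence plus radial continuity to deduce $\liminf f_j(y_j)\ge f(x)$ for any target value of $f(x)$.
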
\noindent
    Now let $\cone\subset\fconv$ be an $(\A,\Oc)$-cone. Then $\Oc\subset\A$ are by assumption two non-empty closed and convex subsets of $\R^n$ that have non-empty interior. In particular, the domain of any element in $\cone$ contains the open set $\Oc$ and has thus non-empty interior.
	\begin{lemma}\label{lemma:cont_add}
		Pointwise addition defines a continuous map
		\begin{align*}
			\cone\times \cone & \rightarrow \cone\\
			(f,h) &\mapsto f+h.
		\end{align*}
		Similarly, the map
		\begin{align*}
			\cone &\rightarrow \cone\\
			f&\mapsto t f
		\end{align*}
		is continuous for every $t>0$.
	\end{lemma}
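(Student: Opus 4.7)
The plan is to verify the two defining conditions of epi-convergence (from the beginning of Section~\ref{sec:preliminaries}) for the sums $f_j+h_j$ and for the positive multiples $tf_j$ directly. The key preliminary observation is that for every $f,h\in\cone$ one has $\Oc\subset\dom(f+h)=\dom f\cap\dom h\subset\A$, so $\dom(f+h)$ is convex with non-empty interior; moreover every $x\in\interior(\dom(f+h))$ automatically lies in $\interior(\dom f)\cap\interior(\dom h)$, as any open ball around such an $x$ contained in $\dom(f+h)$ is in particular contained in $\dom f$ and in $\dom h$. Closure of $\cone$ under pointwise sums and positive scalar multiples is part of the cone hypothesis, so $f+h$ and $tf$ lie in $\cone$.

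The scalar multiplication statement is essentially immediate: multiplying the liminf inequality $\liminf_j f_j(x_j)\ge f(x)$ and any recovery sequence for $f_j\to f$ by $t>0$ yields the corresponding statements for $tf_j\to tf$. For addition, the liminf condition follows from superadditivity of $\liminf$:
\begin{equation*}
\liminf_j(f_j+h_j)(x_j)\ \ge\ \liminf_j f_j(x_j)+\liminf_j h_j(x_j)\ \ge\ f(x)+h(x)
\end{equation*}
for every $x_j\to x$; the right-hand side is unambiguous because $f,h$ are proper and thus both liminfs exceed $-\infty$.

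What remains is to construct, for each $x\in\R^n$, a single recovery sequence $x_j\to x$ with $(f_j+h_j)(x_j)\to(f+h)(x)$. I would split into three cases. If $x\in\interior(\dom(f+h))$, then $x$ also lies in $\interior(\dom f)\cap\interior(\dom h)$, and Lemma~\ref{lemma:EpiConvergencePointwiseConvergence}(iii) applied to $K=\{x\}$ gives $f_j(x)\to f(x)$ and $h_j(x)\to h(x)$, so the constant sequence $x_j=x$ works. If $(f+h)(x)=+\infty$, the liminf inequality applied to $x_j=x$ already forces $(f_j+h_j)(x)\to+\infty$, and the constant sequence again suffices. The remaining case, $x\in\partial\dom(f+h)$ with $(f+h)(x)$ finite, is the only genuine obstacle, since the individual recovery sequences for $f_j\to f$ and $h_j\to h$ need not coincide at boundary points. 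To handle it, I would fix $y_0\in\interior(\dom(f+h))$ and set $y_k=(1-\tfrac{1}{k})x+\tfrac{1}{k}y_0$, which lies in $\interior(\dom(f+h))$ by convexity. The restrictions of $f$ and $h$ to the segment $[x,y_0]$ are one-dimensional convex lsc proper functions with finite values on the whole segment, hence continuous there, so $(f+h)(y_k)\to(f+h)(x)$. Combined with the pointwise convergence $(f_j+h_j)(y_k)\to(f+h)(y_k)$ from the interior case for each fixed $k$, a standard diagonal extraction produces the desired sequence $x_j\to x$ with $(f_j+h_j)(x_j)\to(f+h)(x)$. Hence both conditions of epi-convergence hold for the sum, completing the continuity argument.
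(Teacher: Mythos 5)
Your proof is correct, but it takes a more elementary, definition-level route than the paper does. The paper instead invokes Lemma~\ref{lemma:EpiConvergencePointwiseConvergence} (equivalence of epi-convergence with pointwise convergence on a dense set, for limits whose domain has non-empty interior): each $(f_j)_j$ converges pointwise to $f$ on the open dense set $\R^n\setminus\partial\dom f$, the sums $(f_j+h_j)_j$ then converge pointwise to $f+h$ on the intersection of two open dense sets, which is again dense, and one applies the same equivalence in reverse. This sidesteps entirely the construction of recovery sequences and, in particular, the delicate boundary case $x\in\partial\dom(f+h)$ with $(f+h)(x)$ finite that you had to handle by one-dimensional convexity and a diagonal extraction. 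Your argument has the advantage of being self-contained and making explicit exactly where superadditivity of $\liminf$ and lower semicontinuity are used (to rule out $-\infty$), whereas the paper's argument is shorter because the hard work is delegated to the Attouch--Wets characterization; both are valid, and the hypothesis that $\dom(f+h)\supset\Oc$ has non-empty interior is what makes either route available.
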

    \begin{proof}
        Let $(f_j)_j$ and $(h_j)_j$ be sequences in $\cone$ that epi-converge to $f,h\in\Gamma$ respectively. Then, $(f_j)_j$ converges pointwise to $f$ on $\R^n\setminus \partial \dom f$ and $(h_j)_j$ converges pointwise to $h$ on $\R^n\setminus \partial \dom h$ by Lemma \ref{lemma:EpiConvergencePointwiseConvergence}, which are both open and dense in $\R^n$. In particular, $(f_j+h_j)_j$ converges pontwise on the open and dense subset $(\R^n\setminus \partial \dom f)\cap(\R^n\setminus \partial \dom h)$ to $f+h$, and thus this sequence epi-converges to $f+h$ by Lemma \ref{lemma:EpiConvergencePointwiseConvergence}. This shows the first claim. The second follows with a similar argument.
    \end{proof}
	
	For an $(\A,\Oc)$-cone $\cone$ consider the map
	\begin{align*}
		H_\A \colon \KN &\rightarrow \cone\\
		K &\mapsto h_K(\cdot,-1)+I_\A.
	\end{align*} 
    Since a sequence $(K_j)_j$ converges to $K$ in the Hausdorff metric if and only if $(h_{K_j})_j$ converges to $h_K$ uniformly on compact sets, Lemma \ref{lemma:cont_add} implies the following.
	\begin{lemma}\label{lemma:HAContinuous}
		$H_\A \colon \KN \rightarrow \cone$ is continuous.
	\end{lemma}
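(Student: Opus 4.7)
The plan is to combine the classical equivalence between Hausdorff convergence of convex bodies and uniform convergence of their support functions on compact sets with the epi-convergence criterion from Lemma \ref{lemma:EpiConvergencePointwiseConvergence}. So suppose $(K_j)_j$ converges to $K$ in the Hausdorff metric on $\KN$. Then $h_{K_j} \to h_K$ uniformly on compact subsets of $\R^{n+1}$, and in particular the restrictions $h_{K_j}(\cdot,-1)$ converge to $h_K(\cdot,-1)$ uniformly on every compact subset of $\R^n$.

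I then want to upgrade this to epi-convergence of $H_\A(K_j) = h_{K_j}(\cdot,-1)+I_\A$ to $H_\A(K) = h_K(\cdot,-1)+I_\A$ in $\cone$. The limit has domain $\A$, which has non-empty interior by assumption, so Lemma \ref{lemma:EpiConvergencePointwiseConvergence} applies, and it suffices to verify pointwise convergence on a dense subset of $\R^n$. Since $\A$ is convex with non-empty interior, the boundary $\partial\A$ has empty interior, so the open set $\interior\A \cup (\R^n\setminus\overline{\A})$ is dense. On $\interior\A$, pointwise convergence follows at once from the uniform-on-compacts convergence of $h_{K_j}(\cdot,-1)$ to $h_K(\cdot,-1)$; on $\R^n\setminus\overline{\A}$ both functions are identically $+\infty$, so the equality is trivial. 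This yields the required epi-convergence, and since the sequence lies in $\cone$ by the defining property of an $(\A,\Oc)$-cone, it shows $H_\A(K_j) \to H_\A(K)$ in $\cone$.

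The argument has no real obstacle. The one mildly subtle point, which explains why Lemma \ref{lemma:cont_add} is only invoked as motivation rather than applied term-by-term, is that when $\A\neq\R^n$ the functions $h_{K_j}(\cdot,-1)$ by themselves do not lie in $\cone$, since their domain is all of $\R^n$ rather than a subset of $\A$. This prevents a direct application of Lemma \ref{lemma:cont_add} to the pair $(h_{K_j}(\cdot,-1),I_\A)$, but the dense-pointwise-convergence argument above is precisely the mechanism underlying that lemma and goes through without modification.
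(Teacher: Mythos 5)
Your proof is correct and follows essentially the same route as the paper: pass from Hausdorff convergence to uniform-on-compacts convergence of $h_{K_j}(\cdot,-1)$, then to epi-convergence of $H_\A(K_j)$ via the dense-pointwise-convergence criterion of Lemma~\ref{lemma:EpiConvergencePointwiseConvergence}. You are also right to flag that when $\A\neq\R^n$ the paper's citation of Lemma~\ref{lemma:cont_add} is not a literal application, since $h_{K_j}(\cdot,-1)$ need not lie in $\cone$; your direct density argument supplies exactly the mechanism that lemma is built on, so this is a careful reading of a terse step rather than a different approach.
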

	
	The following is a special case of \cite[Example 9.14]{RockafellarWetsVariationalAnalysis1998}.
	\begin{proposition}
		\label{proposition:LocalLipschitzConstants}
		Let $U\subset V$ be a convex open subset and $f:U\rightarrow\R$ a convex function. If $A\subset U$ is a set with $A+B_\epsilon (0)\subset U$ and $f$ is bounded on $A+ B_\epsilon(0)$, then $f$ is Lipschitz continuous on $A$ with Lipschitz constant $\frac{2}{\epsilon}\sup_{x\in A+B_\epsilon(0)}|f(x)|$.
	\end{proposition}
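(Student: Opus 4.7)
The plan is to apply the three-point convexity inequality along the ray from $x$ through $y$, extended by the buffer $\epsilon$. Set $M\coloneqq \sup_{u\in A+B_\epsilon(0)}|f(u)|$, which is finite by hypothesis. Fix two distinct points $x,y\in A$, and introduce the auxiliary point
\begin{align*}
    z\coloneqq y+\epsilon\,\frac{y-x}{|y-x|}.
\end{align*}
Since $z\in y+B_\epsilon(0)\subset A+B_\epsilon(0)\subset U$, $f(z)$ is defined and $|f(z)|\le M$.

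Next I would write $y$ as an interior convex combination of $x$ and $z$: with $t\coloneqq\frac{|y-x|}{|y-x|+\epsilon}\in(0,1)$, a direct computation gives $y=(1-t)x+tz$. Convexity of $f$ then yields $f(y)\le (1-t)f(x)+tf(z)$, so
\begin{align*}
    f(y)-f(x)\le t\bigl(f(z)-f(x)\bigr)\le 2Mt\le \frac{2M}{\epsilon}|y-x|,
\end{align*}
using $t\le|y-x|/\epsilon$ in the last step. Swapping the roles of $x$ and $y$ (the construction is symmetric and stays within $U$ since both points lie in $A$) gives the reverse inequality $f(x)-f(y)\le \frac{2M}{\epsilon}|y-x|$, and combining the two yields the claim.

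There is no real obstacle here: the only point requiring a moment of care is confirming that the extended point $z$ stays inside $U$, which is immediate from the hypothesis $A+B_\epsilon(0)\subset U$. Everything else is the standard one-dimensional secant-slope argument for convex functions applied along a single line.
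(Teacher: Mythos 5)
Your argument is correct and complete. Note that the paper does not actually prove this proposition itself; it cites it as a special case of Example~9.14 in Rockafellar--Wets, \emph{Variational Analysis}. Your proof is the standard self-contained secant-slope argument: extend the segment from $x$ through $y$ by length $\epsilon$ to a point $z$ that still lies in $A+B_\epsilon(0)\subset U$, write $y$ as a convex combination $y=(1-t)x+tz$ with $t=\frac{|y-x|}{|y-x|+\epsilon}$, apply convexity, bound $f(z)-f(x)$ by $2M$, and use $t\le |y-x|/\epsilon$; symmetry in $x,y$ then gives the two-sided estimate. This is almost certainly the same argument that underlies the cited reference, so the content matches; the difference is only that you have inlined a proof where the paper invokes a citation. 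Your computation confirming $y=(1-t)x+tz$ checks out, and you correctly verify that $z$ remains in the domain, which is the one place the hypothesis $A+B_\epsilon(0)\subset U$ is actually used.
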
\noindent
	Note that any $f\in\Conv(\R^n)$ is continuous on the interior of its domain by \cite[Theorem 2.35]{RockafellarWetsVariationalAnalysis1998}. In particular, this applies to any function belonging to an $(\A,\Oc)$-cone $\cone$.
 
    The following result is well-known and follows, for example, from \cite[Lemma 9]{ColesantiEtAlValuationsconvexfunctions2019} in combination with \cite[Corollary 2.10]{Knoerrsupportduallyepi2021}.
	\begin{proposition}
        \label{proposition:ConvcdDense}
		Let $f\in \fconv$. There exists a sequence $(K_j)_j$ in $\KN$ such that $h_{K_j}(\cdot,-1)$ epi-converges to $f$ for $j\rightarrow\infty$.
	\end{proposition}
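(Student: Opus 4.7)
The plan is to realize $f$ as the restriction of a support function evaluated at $(\cdot,-1)$ and then approximate the (necessarily unbounded) convex set by its bounded truncations. Since $f$ is convex, proper, and lower semi-continuous, the Fenchel-Moreau theorem yields $f=f^{**}$, and unfolding the definitions gives
\begin{align*}
    f(y)=\sup_{a\in\R^n}\big[\langle a,y\rangle-f^*(a)\big]=\sup_{(a,c)\in\epi(f^*)}\big[\langle a,y\rangle-c\big]=h_{\epi(f^*)}(y,-1)
\end{align*}
for every $y\in\R^n$. Moreover, $f^*$ is itself convex, lower semi-continuous, and proper (choosing any $x_0\in\dom f$, one has $f^*(a)\geq\langle a,x_0\rangle-f(x_0)>-\infty$, while $f^*\not\equiv\infty$ since $f^{**}=f$ is proper), so the set $E:=\epi(f^*)\subset\R^{n+1}$ is non-empty, closed, and convex, but in general unbounded.

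I would then set $K_j:=E\cap B_j(0)$ for all $j$ large enough that $K_j\neq\emptyset$. Each such $K_j$ is a convex body in $\R^{n+1}$, the sequence $(K_j)_j$ is non-decreasing, and $\bigcup_j K_j=E$, so the support functions satisfy
\begin{align*}
    h_{K_j}(y,-1)=\sup_{(a,c)\in K_j}\big[\langle a,y\rangle-c\big]\nearrow \sup_{(a,c)\in E}\big[\langle a,y\rangle-c\big]=f(y)
\end{align*}
pointwise as $j\to\infty$, with values in $\R\cup\{+\infty\}$.

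It remains to promote pointwise monotone convergence to epi-convergence. Each $h_{K_j}(\cdot,-1)$ is convex and continuous as the restriction of the continuous support function $h_{K_j}\colon\R^{n+1}\to\R$ to an affine hyperplane, and $f$ is lower semi-continuous. For the liminf condition, given $x_i\to x$ and $k$ fixed, monotonicity yields $h_{K_i}(x_i,-1)\geq h_{K_k}(x_i,-1)$ for $i\geq k$, so by continuity of $h_{K_k}$,
\begin{align*}
    \liminf_{i\to\infty}h_{K_i}(x_i,-1)\geq \lim_{i\to\infty}h_{K_k}(x_i,-1)=h_{K_k}(x,-1);
\end{align*}
letting $k\to\infty$ gives $\liminf_i h_{K_i}(x_i,-1)\geq f(x)$. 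The limsup condition is witnessed by the constant recovery sequence $x_i\equiv x$, since $h_{K_i}(x,-1)\to f(x)$. No genuine obstacle arises; the whole argument is essentially a packaging of the Fenchel-Moreau identity together with the standard fact that monotone increasing sequences of lower semi-continuous functions epi-converge to their pointwise supremum, the only care being to take $j$ large enough that the truncated bodies are non-empty.
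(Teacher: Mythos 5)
Your proof is correct and self-contained. The paper does not give a proof of this proposition; it only cites \cite[Lemma 9]{ColesantiEtAlValuationsconvexfunctions2019} together with \cite[Corollary 2.10]{Knoerrsupportduallyepi2019}, which in essence package the same duality argument (approximating $f^*\in\fconv$ from above by convex functions with compact domain and then passing through the Fenchel--Legendre transform, which is a homeomorphism of $\fconv$). What you have done is unfold exactly this into an explicit elementary argument: write $f=h_{\epi(f^*)}(\cdot,-1)$ via Fenchel--Moreau, truncate $E=\epi(f^*)$ by balls to get bodies $K_j=E\cap B_j(0)$, observe that $h_{K_j}(\cdot,-1)$ increases pointwise to $f$, and promote a monotone increasing sequence of continuous convex functions converging pointwise to the lsc function $f$ into epi-convergence by the standard liminf/recovery-sequence check. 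All steps are sound: $f^*$ is proper because $f$ is, so $E\neq\emptyset$; the truncations are nonempty compact convex for $j$ large (re-index if necessary); $h_{K_j}(\cdot,-1)$ is finite, convex, and continuous, hence in $\fconv$; the liminf bound uses only the monotonicity $h_{K_i}\geq h_{K_k}$ for $i\geq k$ plus continuity of the fixed $h_{K_k}$, and then $k\to\infty$; the constant recovery sequence works precisely because convergence is monotone to $f(x)$, including when $f(x)=\infty$. The upshot is a cleaner, reference-free proof of a density fact the paper treats as known, buying transparency at no cost in generality.
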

    This implies the following density result.
		\begin{lemma}\label{lemma:dual_density}
		The map $H_\A\colon \KN \rightarrow \cone$ has dense image.
	\end{lemma}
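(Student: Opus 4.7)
The plan is to combine the density result from Proposition \ref{proposition:ConvcdDense} with the pointwise characterization of epi-convergence in Lemma \ref{lemma:EpiConvergencePointwiseConvergence}. The main conceptual point is that any $f\in\cone$ already satisfies $f+I_\A=f$, since $\dom f\subset \A$: for $x\in\A$ the indicator contributes $0$, while for $x\notin\A$ both $f$ and $I_\A$ take the value $\infty$. So the task reduces to showing that if $h_{K_j}(\cdot,-1)$ epi-converges to $f$, then $h_{K_j}(\cdot,-1)+I_\A$ also epi-converges to $f$.

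First, I would fix $f\in\cone$. Since $\Oc\subset\dom f$ and $\Oc$ has non-empty interior, $\dom f$ has non-empty interior, which makes Lemma \ref{lemma:EpiConvergencePointwiseConvergence} applicable with limit $f$. Proposition \ref{proposition:ConvcdDense} furnishes a sequence $(K_j)_j\subset \KN$ such that $h_{K_j}(\cdot,-1)$ epi-converges to $f$; in particular, by Lemma \ref{lemma:EpiConvergencePointwiseConvergence} there is a dense subset $D\subset\R^n$ on which $h_{K_j}(\cdot,-1)$ converges pointwise to $f$.

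Next, I would check that the perturbed sequence $h_{K_j}(\cdot,-1)+I_\A=H_\A(K_j)$ converges pointwise to $f=f+I_\A$ on the set
\begin{equation*}
D':=(D\cap \interior\A)\cup(\R^n\setminus\A).
\end{equation*}
On $D\cap\interior\A$ the addition of $I_\A$ is trivial and convergence is inherited from $D$; on the open set $\R^n\setminus\A$ both sides are identically $\infty$. Since $\A$ is a closed convex set with non-empty interior, $\partial\A$ is nowhere dense in $\R^n$, so $D'$ is dense in the open dense set $\R^n\setminus\partial\A$, hence dense in $\R^n$. Invoking Lemma \ref{lemma:EpiConvergencePointwiseConvergence} in the direction $(2)\Rightarrow(1)$ for the functions $H_\A(K_j)\in\fconv$ and the limit $f$ then yields epi-convergence $H_\A(K_j)\to f$, proving denseness of the image.

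The only mildly delicate step is handling boundary points of $\A$ that lie in $\dom f$, where naively one might worry that the sequences $(x_j)_j$ required in the recovery condition of epi-convergence cannot be chosen inside $\A$. Routing the argument through pointwise convergence on a dense subset, rather than directly verifying the liminf/limsup conditions, sidesteps this issue entirely, since $\partial\A$ contributes a nowhere dense set that can be safely excluded.
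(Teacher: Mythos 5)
Your proof is correct and follows essentially the same route as the paper: fix $f\in\cone$, note $f=f+I_\A$, obtain $h_{K_j}(\cdot,-1)\to f$ from Proposition \ref{proposition:ConvcdDense}, and pass to $H_\A(K_j)\to f$ via pointwise convergence on a dense set and Lemma \ref{lemma:EpiConvergencePointwiseConvergence}. The only difference is that the paper condenses the last step by invoking Lemma \ref{lemma:cont_add} (continuity of pointwise addition), whereas you unfold that argument by hand -- which in fact also quietly patches the small mismatch that $h_{K_j}(\cdot,-1)$ need not itself lie in $\cone$.
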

    \begin{proof}
        Let $f\in\cone$. By Proposition \ref{proposition:ConvcdDense} there exists a sequence $(K_j)_j$ in $\KN$ such that $h_{K_j}(\cdot,-1)$ converges to $f$. Since $f=f+I_\A$, the sequence $H_\A(K_j)=h_K(\cdot,-1)+I_\A$ converges to $f$ by Lemma \ref{lemma:cont_add}.
    \end{proof}

    We recall that for $f \in \fconv$ its subgradient at $x \in \R^n$ is the set \[ \partial f(x) \coloneqq \{ y \in \R^n \colon f(z) \geq f(x)+\langle z-x,y \rangle \text{ for every }z \in \R^n\}.\]
    The following properties are well-known.
    \begin{lemma}[\cite{RockafellarConvex1997} Theorem 23.5]
		\label{lemma:PropertiesSubgradients}
		For $f\in\fconv$, $x,y\in \R^n$, the following are equivalent:
		\begin{enumerate}[itemsep=0pt]
			\item $y\in\partial f(x)$,
			\item $\langle y,x\rangle =f(x)+f^*(y)$,
			\item $y\in \mathrm{argmax}_{x\in V}\langle y,x\rangle -f(x)$.
		\end{enumerate}
	\end{lemma}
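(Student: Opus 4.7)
The plan is to unpack the relevant definitions: this lemma is essentially a bookkeeping exercise, requiring no genuinely new input beyond the definitions of the subgradient and of the Fenchel--Legendre transform. I would organize the proof as a pair of short equivalences, $(1)\Leftrightarrow(3)\Leftrightarrow(2)$, and remark that $(3)$ should be read as the assertion that \emph{$x$} is a maximizer of $z\mapsto \langle y,z\rangle - f(z)$ (the displayed variable in the $\argmax$ is a dummy).

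For $(1)\Leftrightarrow(3)$: by the definition recalled just before the lemma, $y\in \partial f(x)$ is equivalent to the inequality
\[
	f(z)\geq f(x)+\langle y, z-x\rangle\quad \text{for every } z\in\R^n.
\]
Rearranging this as $\langle y,z\rangle - f(z)\leq \langle y,x\rangle - f(x)$ for every $z\in\R^n$ gives precisely the condition that the function $z\mapsto \langle y,z\rangle - f(z)$ attains its supremum at $x$, i.e.\ the content of $(3)$.

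For $(3)\Leftrightarrow(2)$: recalling the definition \eqref{eq:F-L} of the Fenchel--Legendre transform, $f^*(y)=\sup_{z\in\R^n}\{\langle z,y\rangle - f(z)\}$. By the very definition of the supremum, $x$ is a maximizer of $z\mapsto \langle y,z\rangle -f(z)$ if and only if $\langle y,x\rangle - f(x)=f^*(y)$, which rearranges to $\langle y,x\rangle = f(x)+f^*(y)$, i.e.\ condition $(2)$.

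I do not foresee any genuine obstacle. The only subtlety worth noting is a check that the identity in $(2)$ is well-defined: since $f\in\fconv$ is proper, $f^*$ takes values in $\R\cup\{\infty\}$; if $y\in\partial f(x)$ then the subgradient inequality evaluated at $z=x$ forces $f(x)<\infty$, and it shows $f^*(y)\le \langle y,x\rangle - f(x)<\infty$, so both sides of $(2)$ are finite and the equality makes sense.
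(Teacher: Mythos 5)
Your proof is correct and is the standard argument; the paper itself does not prove this lemma but cites it to Rockafellar, Theorem 23.5, so there is no alternative approach in the paper to compare against. One small slip in your closing remark: evaluating the subgradient inequality at $z=x$ gives $f(x)\geq f(x)$, which is vacuous and does not rule out $f(x)=\infty$. The correct observation is that if $f(x)=\infty$ one should instead evaluate the inequality at some $z$ with $f(z)<\infty$ (which exists since $f$ is proper); this forces $f(z)\geq\infty$, a contradiction, so $\partial f(x)=\emptyset$ whenever $f(x)=\infty$, and hence $y\in\partial f(x)$ indeed forces $f(x)<\infty$. With that correction your well-definedness check goes through as you describe.
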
\noindent
	Moreover, we will make use of a special case of \cite[ 2.1.2 i]{ClarkeOptimization1990}).
	\begin{lemma}\label{lemma:SubdifferentialLipschitz}
	Let $U\subset\R^n$ be open and convex, $f:U\rightarrow\R$ convex and Lipschitz continuous on a neighbourhood of $x_0\in U$ with Lipschitz constant $L>0$. Then $\partial f(x_0)$ is non-empty, compact, and convex with $\partial f(x_0)\subset B_L(0)$.
	\end{lemma}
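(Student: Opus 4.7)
The plan is to verify each of the four assertions about $\partial f(x_0)$ directly from its defining inequality, working inside a neighbourhood $U' \subset U$ on which $f$ is $L$-Lipschitz and which contains a closed ball $B_\rho(x_0)$ for some $\rho > 0$.

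First, convexity and closedness are essentially formal. Observing that for each fixed $z \in U$ the inequality $f(z) \geq f(x_0) + \langle z - x_0, y\rangle$ is affine (in fact linear) in the variable $y$, the set $\partial f(x_0)$ is an intersection of closed affine half-spaces in $\R^n$, hence both closed and convex.

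Next, I would establish the norm bound $\partial f(x_0) \subset B_L(0)$, which combined with closedness gives compactness. Given any nonzero $y \in \partial f(x_0)$, set $z = x_0 + \rho y/|y| \in B_\rho(x_0) \subset U'$. The subgradient inequality yields $f(z) - f(x_0) \geq \rho |y|$, while the $L$-Lipschitz property on $U'$ gives $|f(z) - f(x_0)| \leq L\rho$. Dividing by $\rho$ produces $|y| \leq L$, as desired.

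The main obstacle is non-emptiness, which requires producing an actual subgradient. I would use the classical one-sided directional derivative: by convexity, for each $v \in \R^n$ the limit
\begin{equation*}
f'(x_0;v) = \lim_{t \to 0^+}\frac{f(x_0 + tv) - f(x_0)}{t}
\end{equation*}
exists (as a decreasing limit of difference quotients) and defines a sublinear function of $v$. The $L$-Lipschitz property forces $|f'(x_0;v)| \leq L|v|$, so $f'(x_0;\cdot)$ is a finite sublinear functional on $\R^n$. By Hahn--Banach, there exists a linear functional $y \in \R^n$ with $\langle y, v\rangle \leq f'(x_0;v)$ for every $v \in \R^n$. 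Convexity then yields $f(x_0 + v) - f(x_0) \geq f'(x_0;v) \geq \langle y, v\rangle$ for all $v$ such that $x_0 + v \in U$, proving $y \in \partial f(x_0)$. This completes the proof; the argument uses only convexity, the openness of $U$, and local Lipschitz continuity, which matches the hypotheses available.
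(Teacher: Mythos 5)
Your proof is correct and complete. The paper does not actually supply a proof of this lemma --- it is stated as a special case of \cite[2.1.2 i]{ClarkeOptimization1990} --- so there is no in-paper argument to compare against; your self-contained derivation is the standard textbook one. The three steps (closedness and convexity from the intersection-of-half-spaces description, the norm bound from testing the subgradient inequality against $z = x_0 + \rho\, y/|y|$ inside the Lipschitz neighbourhood, and non-emptiness via the one-sided directional derivative $f'(x_0;\cdot)$, its sublinearity and finiteness, and a Hahn--Banach minorant) are all carried out correctly; in particular the monotonicity of the difference quotients in $t$ gives both existence of $f'(x_0;v)$ and the inequality $f'(x_0;v)\le f(x_0+v)-f(x_0)$ that you use to conclude $y\in\partial f(x_0)$. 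One could shorten the non-emptiness step in $\R^n$ by replacing Hahn--Banach with a compactness/separation argument (or simply citing Rockafellar's Theorem 23.4), but your version is fully rigorous and matches what the referenced result in Clarke delivers.
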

	Given an extended real-valued function $f: \Rn \to \R \cup \{\infty \}$ we denote its epi-graph by
\begin{align*}
	\epi(f)=\{(x,t)\in \R^n\times\R: f(x)\le t\}.
\end{align*}
    Note that $\epi(f)$ is a closed convex set in $\R^{n+1}$ for every $f\in\fconv$. 
	The following construction will be crucial for our extension procedure. It is an adaptation of \cite[Proposition 2.9]{Knoerrsupportduallyepi2021}. 
	\begin{lemma}
		\label{lemma:ReplaceFunctionByBody}
		Let $f\in \fconv$ be a function with $\interior(\dom f)\ne\emptyset$, $c>0$, $\epsilon>0$, and $A\in\K^n$ with $A+ B_\epsilon(0)\subset \interior(\dom f)$ be given. If $c\ge\max_{x\in A+ B_\epsilon(0)} f(x)$, then the convex body
		\begin{align*}
			K_{f,A,\epsilon}\coloneqq\epi(f^*)\cap \left\{(y,t)\in \R^n\times\R: |y|\le\frac{2c}{\epsilon}, |t|\le c\left(1+\frac{2\max_{x\in A}|x|}{\epsilon}\right)\right\}
		\end{align*}
		is non-empty and satisfies $f\ge h_{K_{f,A,\epsilon}}(\cdot,-1)$ as well as
		\begin{align*}
			f(x)=h_{K_{f,A,\epsilon}}(x,-1)\quad\text{for all}~x\in A.
		\end{align*}
	\end{lemma}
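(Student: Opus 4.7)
The starting point is Fenchel biconjugation: since $f\in\fconv$ is proper, convex, and lower semi-continuous, $f=f^{**}$, giving
\[
f(x)=\sup_{y\in\R^n}\bigl(\langle x,y\rangle - f^*(y)\bigr)=h_{\epi(f^*)}(x,-1) \quad\text{for all } x\in\R^n.
\]
Two observations follow at once. First, $K_{f,A,\epsilon}$ is the intersection of the closed convex set $\epi(f^*)$ with a compact convex box, so once non-emptiness is established, it is automatically a convex body. Second, the inclusion $K_{f,A,\epsilon}\subset\epi(f^*)$ yields $h_{K_{f,A,\epsilon}}(\cdot,-1)\leq h_{\epi(f^*)}(\cdot,-1)=f$ for free, which already settles one of the two desired conclusions.

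The remaining task, for each $x\in A$, is to exhibit a point of $K_{f,A,\epsilon}$ at which the supremum defining $h_{\epi(f^*)}(x,-1)$ is attained. The natural choice is $(y,f^*(y))$ with $y\in\partial f(x)$: Lemma \ref{lemma:PropertiesSubgradients} gives $f^*(y)=\langle x,y\rangle - f(x)$, so $(y,f^*(y))\in\epi(f^*)$ and $\langle x,y\rangle - f^*(y)=f(x)$. Non-emptiness of $\partial f(x)$ for $x\in A$ follows from Proposition \ref{proposition:LocalLipschitzConstants}---applied with $c$ bounding $|f|$ on $A+B_\epsilon(0)$---combined with Lemma \ref{lemma:SubdifferentialLipschitz}.

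To conclude, I would verify the two box constraints. For $|y|$, the cleanest route is to plug $y/|y|$ directly into the subgradient inequality: assuming $y\neq 0$, the point $z=x+\epsilon y/|y|$ lies in $A+B_\epsilon(0)\subset\dom f$, and $f(z)\geq f(x)+\langle y,z-x\rangle=f(x)+\epsilon|y|$ forces $\epsilon|y|\leq f(z)-f(x)\leq 2c$, i.e.\ $|y|\leq 2c/\epsilon$; the case $y=0$ is trivial. For the second coordinate, using $f^*(y)=\langle x,y\rangle - f(x)$ again,
\[
|f^*(y)|\leq |y|\max_{x\in A}|x|+|f(x)|\leq c\Bigl(1+\tfrac{2\max_{x\in A}|x|}{\epsilon}\Bigr).
\]
Hence $(y,f^*(y))\in K_{f,A,\epsilon}$, which simultaneously gives non-emptiness and the bound $h_{K_{f,A,\epsilon}}(x,-1)\geq \langle x,y\rangle - f^*(y)=f(x)$ for $x\in A$; together with the reverse inequality from the first paragraph, this proves equality on $A$. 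The only genuinely subtle point---and essentially the main obstacle---is the implicit reading that $c$ dominates $|f|$ rather than merely $f$ on $A+B_\epsilon(0)$; this is needed to close the estimate $f(z)-f(x)\leq 2c$, and is consistent with the formulation of Proposition \ref{proposition:LocalLipschitzConstants}, which uses $\sup|f|$ in its Lipschitz constant.
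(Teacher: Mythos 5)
Your proof is correct and follows essentially the same approach as the paper: the inequality $h_{K_{f,A,\epsilon}}(\cdot,-1)\le f$ comes from $K_{f,A,\epsilon}\subset\epi(f^*)$, and for the reverse inequality on $A$ you exhibit the witness $(y,f^*(y))$ with $y\in\partial f(x)$ and verify the two box constraints. The only cosmetic difference is that you get $|y|\le 2c/\epsilon$ directly from the subgradient inequality at $z=x+\epsilon y/|y|$, whereas the paper reaches the same bound via Proposition \ref{proposition:LocalLipschitzConstants} and Lemma \ref{lemma:SubdifferentialLipschitz}; both routes are fine and you still invoke those same lemmas to ensure $\partial f(x)\ne\emptyset$. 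You are also right to flag the small imprecision in the hypothesis: both your argument and the paper's in fact use $c\ge\max_{x\in A+B_\epsilon(0)}|f(x)|$, not merely $c\ge\max_{x\in A+B_\epsilon(0)}f(x)$, since the Lipschitz constant of Proposition \ref{proposition:LocalLipschitzConstants} involves $\sup|f|$ and the estimate on $|f^*(y)|$ needs $|f(x_0)|\le c$. This is clearly the intended reading, consistent with how the lemma is applied later (for instance in the proof of Theorem \ref{theorem:extension}, where $c$ is chosen as a bound on $|f|$).
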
	
	\begin{proof}
		First note that
		\begin{align*}
			h_{K_{f,A,\epsilon}}(x,-1)=\sup_{(y,t)\in K_{f,A,\epsilon}}\left\{\langle y,x\rangle -t \right\}\le \sup_{(y,t)\in \epi f^*}\left\{\langle y,x\rangle -t\right\}=f(x)\quad \text{for all}~x\in \R^n,
		\end{align*}
		since $K_{f,A,\epsilon}\subset \epi f^*$. It is thus sufficient to show that the reverse inequality holds for $x\in A$. Since $f$ is locally Lipschitz continuous on $\mathrm{int}(\dom f)$ by Proposition \ref{proposition:LocalLipschitzConstants} and $A\subset \mathrm{int}(\dom f)$, $\partial f(x)$ is non-empty for every $x\in A$ by Lemma \ref{lemma:SubdifferentialLipschitz}. As $A+ B_\epsilon(0)$ is contained in $\mathrm{int}(\dom f)$ by assumption, $f$ is Lipschitz continuous on $A$ with Lipschitz constant bounded by $\frac{2}{\epsilon}\sup_{x\in A+B_\epsilon(0)}|f(x)|\le \frac{2c}{\epsilon}$. Lemma \ref{lemma:SubdifferentialLipschitz} thus shows that $|y|\le \frac{2c}{\epsilon}$ for any $y\in \partial f(x_0)$, $x_0\in A$. Moreover, Lemma \ref{lemma:PropertiesSubgradients} implies, for $y\in\partial f(x_0)$,
		\begin{align*}
			|f^*(y)|=|\langle y,x_0\rangle-f(x_0)|\le |y| \sup_{x\in A}|x| +c\le c\left(1+\frac{2\sup_{x\in A}|x|}{\epsilon}\right).
		\end{align*}
		Therefore, $(y,f^*(y))\in K_{f,A,\epsilon}$ for all $y\in \partial f(x_0)$. For $x_0\in A$ fix $y_0\in \partial f(x_0)$. Lemma \ref{lemma:PropertiesSubgradients} thus shows that
		\begin{align*}
			f(x_0)=\langle y_0,x_0\rangle -f^*(y_0)\le \sup_{(y,t)\in K_{f,A,\epsilon}}\langle y,x_0\rangle -t=h_{K_{f,A,\epsilon}}(x_0,-1).
		\end{align*} 
	\end{proof}

    For $K \in \KN$ we consider the function $\lfloor K\rfloor\in \fconvcd$ given by
    \begin{equation}\label{eq:floor_function}
         \lfloor K \rfloor(x) \coloneqq \inf_{(x,t)\in K} t,
    \end{equation}
    compare \cite[Section 3.1]{KnoerrUlivelli2023}. Then, \[ h_K(x, -1)= \lfloor K \rfloor^*(x), \quad x \in \Rn.\] 
    \begin{lemma}
        \label{lemma:localVariationsConvCD}
        Let $K\in\KN$ and $\phi: \R^n \rightarrow \R$ be bounded. If $h_K(\cdot,-1)+\phi\in\fconvf$, then this function belongs to $\fconvl$.
    \end{lemma}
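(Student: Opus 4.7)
My plan is to realize $f := h_K(\cdot, -1) + \phi$ as $h_L(\cdot, -1)$ for a suitable convex body $L \in \KN$ built as a truncation of the epigraph of $f^*$. The key observation is that the boundedness of $\phi$ forces $\dom f^*$ to be bounded and $f^*$ to be bounded above on its domain. Setting $M := \sup_{\Rn} |\phi|$, one has $h_K(\cdot, -1) - M \le f \le h_K(\cdot, -1) + M$. Applying Fenchel conjugation (which reverses inequalities, combined with $(g+c)^* = g^* - c$ and the identity $h_K(\cdot, -1)^* = \lfloor K \rfloor$, obtained from \eqref{eq:floor_function} via Fenchel-Moreau since $\lfloor K \rfloor \in \fconvcd$), this yields
\[
	\lfloor K \rfloor - M \le f^* \le \lfloor K \rfloor + M.
\]
Since $\lfloor K \rfloor = \infty$ off the compact set $\proj_{\Rn} K$ and is bounded on it, it follows that $\dom f^* \subset \proj_{\Rn} K$ and there exists a constant $C$ with $f^* \le C$ on $\dom f^*$.

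Next, I would set $L := \epi f^* \cap \{(y,t) \in \Rn \times \R : t \le C\}$ and verify $L \in \KN$. Convexity and closedness follow from the lower semi-continuity and convexity of $f^*$. Boundedness follows from the inclusion $L \subset \proj_{\Rn} K \times [\min \lfloor K \rfloor - M,\, C]$, where the lower bound on $t$ uses $t \ge f^*(y) \ge \lfloor K \rfloor(y) - M$. Non-emptiness follows since $f \in \fconvf$ is proper, so $f^*$ is proper, and the choice of $C$ guarantees $(y_0, C) \in L$ for any $y_0 \in \dom f^*$.

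Finally, by Fenchel-Moreau one has $f = f^{**}$, so
\[
	f(x) = \sup_{y \in \dom f^*}(\langle x, y \rangle - f^*(y)).
\]
The choice of $C$ ensures $(y, f^*(y)) \in L$ for every $y \in \dom f^*$, yielding $h_L(x, -1) \ge f(x)$; conversely, $L \subset \epi f^*$ forces $h_L(x, -1) \le f^{**}(x) = f(x)$, so $f = h_L(\cdot, -1) \in \fconvl$. The main technical point is securing genuine compactness of $L$ (not just closedness); this is exactly where the two-sided control on $f^*$, and thus the boundedness hypothesis on $\phi$, is essential — without it $\dom f^*$ could fail to be bounded and no such convex body would exist.
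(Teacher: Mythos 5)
Your proof is correct and takes essentially the same route as the paper: both derive a two-sided bound $\lfloor K\rfloor - M \le f^* \le \lfloor K\rfloor + M$ from the boundedness of $\phi$, deduce that $\dom f^*$ is contained in the compact set $\dom\lfloor K\rfloor$, truncate the epigraph of $f^*$ from above to obtain a convex body $L$, and conclude $f = h_L(\cdot,-1)$ via Fenchel--Moreau. The only cosmetic difference is that the paper phrases the final step through $\lfloor K_\phi\rfloor = f^*$ and then conjugates, whereas you compute $h_L(\cdot,-1)$ directly.
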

    \begin{proof}
        Since $\phi$ is bounded, there exist $C, c \in \R $ such that \[ h_K(\cdot,-1)+c \leq h_K(\cdot,-1)+\phi \leq h_K(\cdot, -1) +C. \] Therefore, by the properties of the Fenchel-Legendre transform, \[ \lfloor K \rfloor - C \leq (h_K(\cdot,-1)+\phi)^* \leq \lfloor K \rfloor - c.\] In particular, $(h_K(\cdot,-1)+\phi)^*$ is a convex function with compact domain $\dom \lfloor K \rfloor $. 

        For $T\geq \max_{x \in \dom \lfloor K \rfloor} (h_K(\cdot,-1)+\phi)^*(x)$, consider the convex body \[ K_\phi=\epi((h_K(\cdot,-1)+\phi)^*) \cap \left\{(x,t)\in \R^{n+1}: x \in \R^n, t \leq T \right\}.\] Then \[ \lfloor K_\phi \rfloor= (h_K(\cdot,-1)+\phi)^*\]  by construction and since $h_{K}(\cdot,-1)+\phi\in\fconvf$ we infer \[ h_{K_\phi}(\cdot,-1)= \lfloor K_\phi \rfloor^*=((h_K(\cdot,-1)+\phi)^*)^*=h_{K}(\cdot,-1)+\phi,\] concluding the proof.
    \end{proof}
    
\subsection{Polynomial valuations on convex bodies}

    Throughout this section, $V$ denotes a Hausdorff topological vector space over the real numbers. A functional $\mu: \Kn \rightarrow V$  with values in $V$ is called a valuation if
    \begin{equation}\label{eq:valuation_on_bodies}
        \mu(K\cup L)+\mu(K \cap L)=\mu(K)+\mu(L)
    \end{equation}
    whenever $K,L,K\cup L\in \Kn$. We refer to \cite[Section 6]{SchneiderConvexBodiesBrunn2013} for a comprehensive overview of the topic. 
    
    Let $\PVal_d(\R^n,V)$ denote the space of all continuous valuations $\mu:\Kn\to V$ that are polynomial of degree at most $d$ with respect to translations, that is, such that
    \begin{align*}
        x\mapsto \mu(K+x), \qquad x \in \Rn,
    \end{align*}
    is a polynomial on $\R^n$ of degree at most $d$ for all $K\in\Kn$, where we write \[K+x = \{y + x: y \in K \}.\] The following result follows from a special case of \cite[Theorem 1]{PukhlikovKhovanskiiFinitely1992}.
    \begin{theorem}
        \label{theorem:PolynomialityPolyValBodies}
        Let $\mu\in \PVal_d(\R^n,V)$. Then $t\mapsto \mu(tK)$ is a polynomial of degree at most $n+d$ in $t\ge 0$.
    \end{theorem}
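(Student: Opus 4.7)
My plan is to reduce the statement to the original Pukhlikov--Khovanski\u{\i} theorem for polynomial valuations on polytopes, and then extend to all convex bodies by approximation. By the cited theorem of Pukhlikov and Khovanski\u{\i} applied to the restriction of $\mu$ to polytopes, for every polytope $P\in\Kn$ there exist elements $v_0(P),\dots,v_{n+d}(P)\in V$ such that
\[
\mu(tP)=\sum_{k=0}^{n+d} t^k v_k(P), \qquad t\ge 0.
\]
For an arbitrary $K\in\Kn$ I would choose a sequence of polytopes $P_j\to K$ in the Hausdorff metric, so that $tP_j\to tK$ for each fixed $t\ge 0$; continuity of $\mu$ then gives $\mu(tP_j)\to\mu(tK)$ in $V$.

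The remaining task is to deduce a polynomial expansion of $\mu(tK)$ from this pointwise convergence, in spite of $V$ being only assumed to be a Hausdorff topological vector space (so that no duality arguments through continuous linear functionals are available). For this I would fix $n+d+1$ pairwise distinct sample points $t_0,\dots,t_{n+d}\ge 0$ and invoke Lagrange interpolation to write each coefficient of $\mu(tP)$ as a fixed linear combination of point evaluations,
\[
v_k(P)=\sum_{\ell=0}^{n+d} c_{k,\ell}\,\mu(t_\ell P),
\]
where the scalars $c_{k,\ell}\in\R$ arise from inverting the Vandermonde matrix associated to $t_0,\dots,t_{n+d}$ and do not depend on $P$. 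Setting $v_k:=\sum_{\ell=0}^{n+d} c_{k,\ell}\,\mu(t_\ell K)$, the continuity of $\mu$ at each $t_\ell K$ together with the continuity of scalar multiplication in $V$ give $v_k(P_j)\to v_k$. Taking limits in the polynomial expansion for $P_j$ at an arbitrary $t\ge 0$ then yields $\mu(tK)=\sum_{k=0}^{n+d} t^k v_k$, as desired.

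The only genuine difficulty in the proof is this last passage to the limit under such weak assumptions on $V$; the Lagrange interpolation trick is precisely what resolves it, reducing everything to the continuity of $\mu$ at finitely many convex bodies and the continuity of scalar multiplication in $V$. No further properties of $\mu$ beyond those provided by the cited theorem and its continuity are needed.
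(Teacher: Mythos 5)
Your proof is correct, and since the paper does not give an argument for this statement — it merely says the result ``follows from a special case of'' Pukhlikov--Khovanski\u{\i}, Theorem~1 — there is no explicit proof in the paper to compare against. Your elaboration is the natural reading of that citation and matches the paper's general toolkit: the Pukhlikov--Khovanski\u{\i} theorem supplies the polynomial expansion $\mu(tP)=\sum_{k=0}^{n+d}t^k v_k(P)$ on polytopes, the coefficients $v_k(P)$ are recovered as fixed $\R$-linear combinations of the point evaluations $\mu(t_\ell P)$ by inverting the Vandermonde matrix associated to $n+d+1$ distinct dilation factors, and continuity of $\mu$ together with the density of polytopes in $\Kn$ transports the identity to an arbitrary convex body $K$. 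The passage to the limit is unproblematic because the sum has finitely many terms and both addition and scalar multiplication in a Hausdorff topological vector space are continuous, which is exactly the observation needed to circumvent the lack of dual separation. This Vandermonde device is also precisely the one the paper deploys repeatedly afterwards (in the proofs of Theorems~\ref{theorem:HomDecompPolyValBodies}, \ref{theorem:TopComponentPolyValBodies}, and Proposition~\ref{prop:hom_dec}), so your argument is in keeping with the authors' methods as well as correct.
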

    Set $\PVal_{d,k}(\R^n,V)\coloneqq \{\mu\in\PVal_d(\R^n,V):\mu(tK)=t^k\mu(K)\,\text{for}~t > 0,K\in\Kn\}$. The following result is well-known. It follows from a standard argument, which we include for the convenience of the reader.
	\begin{theorem}
		\label{theorem:HomDecompPolyValBodies}
		$\PVal_d(\R^n,V)=\bigoplus_{k=0}^{n+d}\PVal_{d,k}(\R^n,V)$.
	\end{theorem}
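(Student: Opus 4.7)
The plan is to run the standard McMullen-style decomposition argument, using Theorem~\ref{theorem:PolynomialityPolyValBodies} as the starting point. Given $\mu\in\PVal_d(\R^n,V)$ and $K\in\Kn$, that theorem lets me write
\begin{equation*}
    \mu(tK)=\sum_{k=0}^{n+d}t^k\mu_k(K),\qquad t>0,
\end{equation*}
and the coefficients $\mu_k(K)\in V$ are uniquely determined as elements of $V$. To extract each $\mu_k$ as a continuous functional on $\Kn$, I would fix $n+d+1$ distinct positive reals $t_0,\dots,t_{n+d}$; the Vandermonde matrix $(t_i^k)_{i,k}$ is invertible, so inverting this linear system expresses $\mu_k(K)$ as a fixed linear combination of the values $\mu(t_0K),\dots,\mu(t_{n+d}K)$. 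Continuity of $K\mapsto \mu_k(K)$ then follows at once from the continuity of $\mu$ and the continuity of the scaling maps $K\mapsto t_iK$.

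Next, I would check the remaining structural properties by comparing polynomial coefficients. The valuation equation comes from substituting $tK,tL$ in place of $K,L$, using $t(K\cup L)=tK\cup tL$ and $t(K\cap L)=tK\cap tL$, and comparing the $t^k$-coefficients in the equality $\mu(tK\cup tL)+\mu(tK\cap tL)=\mu(tK)+\mu(tL)$. The $k$-homogeneity of $\mu_k$ follows by applying the defining expansion both to $\mu((ts)K)$ and to $\mu(t(sK))$ and identifying coefficients. For the polynomial degree condition, I would fix $K\in\Kn$ and $t>0$, note that
\begin{equation*}
    \sum_{k=0}^{n+d}t^k\mu_k(K+x)=\mu(tK+tx)
\end{equation*}
is a polynomial in $x\in\R^n$ of degree at most $d$ (since $\mu$ is polynomial of degree at most $d$ and $x\mapsto tx$ is linear), and then invoke the Vandermonde inversion once more: each $\mu_k(K+x)$ is a fixed linear combination of the $\mu(t_i(K+x))$, all of which are polynomial in $x$ of degree at most $d$. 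Hence $\mu_k\in \PVal_{d,k}(\R^n,V)$.

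Finally, to see that the sum $\sum_k \PVal_{d,k}(\R^n,V)$ is direct, I would suppose $\sum_{k=0}^{n+d}\nu_k=0$ with $\nu_k$ $k$-homogeneous. Evaluating at $tK$ gives $\sum_k t^k\nu_k(K)=0$ for all $t>0$ and all $K$, so uniqueness of polynomial coefficients forces $\nu_k=0$ for every $k$. Combined with the existence of the decomposition above, this yields the stated direct sum. I do not expect a real obstacle here: the entire argument is a straightforward transcription of the classical McMullen decomposition, with the Khovanski\u\i--Pukhlikov polynomiality of $t\mapsto\mu(tK)$ of degree $n+d$ replacing the degree-$n$ statement used in the translation-invariant case.
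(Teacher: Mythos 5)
Your proposal reproduces the paper's argument essentially verbatim: invoke the Khovanski\u{\i}--Pukhlikov polynomiality (Theorem~\ref{theorem:PolynomialityPolyValBodies}) to expand $\mu(tK)$ as a polynomial in $t$ of degree at most $n+d$, recover each coefficient functional $\mu_k$ by Vandermonde inversion at fixed scales $t_0<\dots<t_{n+d}$, and deduce continuity, the valuation property, polynomiality of degree at most $d$, and $k$-homogeneity by comparing coefficients. The only difference is that you spell out the valuation property, the degree bound, and directness of the sum slightly more explicitly than the paper, which treats these as immediate from the Vandermonde representation; the substance is identical.
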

    \begin{proof}
        Let $\mu\in \PVal_d(\R^n,V)$. Theorem \ref{theorem:PolynomialityPolyValBodies} implies that there exist $\mu_k:\Kn\rightarrow V$ for $k \in \{0,1,\dots,n+d\}$ such that
        \begin{align*}
            \mu(tK)=\sum_{k=0}^{d+n}t^k \mu_k(K)\quad\text{for all}~K\in\Kn,~t\ge 0.
        \end{align*}
        Note that $\mu_k(K)$ is uniquely determined by this equation.Fix now some positive values $0<t_0<\dots< t_{n+d}$. Plugging these values in the equation above and using the inverse of the Vandermonde matrix, we obtain unique constants $c_{kj}\in \R$ independent of $K$ such that
        \begin{align*}
            \mu_k(K)=\sum_{k=0}^{n+d}c_{kj}\mu(t_j K)\quad\text{for}~K\in \Kn.
        \end{align*}
        This implies in particular that $\mu_k\in \PVal_d(\R^n,V)$. Moreover, comparing coefficients in $t$ in the expression
        \begin{align*}
            \sum_{k=0}^{d+n}t^k \mu_k(\lambda K)=\mu(t\lambda K)=\sum_{k=0}^{d+n}t^k\lambda^k \mu_k(K),
        \end{align*}
        we see that $\mu_k(\lambda K)=\lambda^k\mu_k(K)$. Thus, $\mu_k$ is $k$-homogeneous, which completes the proof.
    \end{proof}
    
    We will need the following characterization of $n$-homogeneous translation invariant valuations due to Hadwiger \cite{HadwigerVorlesungen1957}. More precisely, Hadwiger established the result for real-valued valuations. However, the proof can be easily adapted to the more general setting. The version of this result given below is also a simple consequence of results by McMullen \cite{McMullenWeaklyContinuousValuations1983}. We include a sketch of proof for the convenience of the reader. We denote by $\vol_n$ the standard $n$-dimensional Lebesgue measure on $\Rn$.
    \begin{theorem}
        \label{theorem:VolumeCharacterization}
        Let $\mu\in \PVal_{0,n}(\R^n,V)$. Then there exists $v\in V$ such that $\mu(K)=\vol_n(K) v$ for all $K\in\Kn$.
    \end{theorem}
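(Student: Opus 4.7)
The plan is to follow Hadwiger's classical strategy, adapted to the $V$-valued setting. The critical first step is to prove simplicity: $\mu(K)=0$ whenever $\dim K<n$. For $K$ contained in a hyperplane $H$ and $v\notin H$, the map $(s,t)\mapsto\mu(sK+t[0,v])$ is a polynomial in $s,t\ge 0$ by McMullen's polynomiality theorem for continuous translation-invariant valuations, which the $n$-homogeneity of $\mu$ forces to be bihomogeneous of total degree $n$, say $\sum_{i+j=n} c_{ij}\, s^i t^j$. Dissecting the prism $K+[0,v]$ into $m$ translates of $K+(1/m)[0,v]$ via hyperplanes parallel to $H$, the valuation property and translation invariance yield
\[
\mu(K+[0,v])=m\,\mu\bigl(K+(1/m)[0,v]\bigr)-(m-1)\mu(K), \qquad m\in\N.
\]
Substituting the polynomial expansion gives a polynomial in $1/m$ of degree at most $n-1$ vanishing at infinitely many values, hence identically zero; matching coefficients forces every $c_{ij}$ with $j\ge 2$ to vanish, so that $\mu(sK+t[0,v])=s^n\mu(K)+s^{n-1}t\,c_{n-1,1}$. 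Specializing to $s=0$ and using $n$-homogeneity gives $\mu([0,v])=0$ for every segment, and iteration across codimensions extends the vanishing to all bodies of dimension less than $n$.

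Once simplicity is established, set $v_0:=\mu([0,1]^n)\in V$. Dissecting $[0,1]^n$ along the $i$-th coordinate axis into $k_i$ congruent slabs (the $(n-1)$-dimensional intersections being $\mu$-null by simplicity, and the slabs equated by translation invariance) yields $\mu(\prod_i[0,1/k_i])=v_0/\prod_i k_i$. Further dissection into unit cells gives $\mu(\prod_i[0,p_i/q])=\prod_i(p_i/q)\cdot v_0$ for rational side lengths, and continuity extends the formula to all axis-aligned boxes. To pass from boxes to arbitrary convex bodies, I use the polarization $\tilde\mu$ of $\mu$ — a symmetric Minkowski-multilinear form on $(\Kn)^n$ with $\mu(K)=\tilde\mu(K,\dots,K)$ — for which simplicity forces vanishing whenever its arguments lie in a common proper linear subspace. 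Together with symmetry and multilinearity this identifies $\tilde\mu$ as a $V$-valued scalar multiple of the mixed volume, and evaluation on Minkowski combinations of the segments $[0,e_1],\dots,[0,e_n]$ (compared with the box computation above) pins the scalar down to $v_0$, whence $\mu(K)=v_0\cdot\vol_n(K)$.

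The main technical difficulty is the simplicity step: extracting $\mu(K)=0$ purely from the bihomogeneous polynomial structure together with the dissection identity. In the real-valued case one could additionally appeal to scalar duality via the classical Hadwiger theorem, but this is unavailable for general Hausdorff topological vector spaces $V$ whose dual does not separate points, so every identity must be derived directly in $V$ through combinatorial manipulations of the dissection relations and bihomogeneous expansion.
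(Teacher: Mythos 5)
Your proof takes a genuinely different route from the paper, but it has a real gap in the simplicity step, and the final passage from boxes to general bodies is also not adequately justified.

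The paper's proof is short: restricting $\mu$ to a hyperplane $H$ gives a translation-invariant continuous valuation on $\K(H)\cong\K^{n-1}$ that is still homogeneous of degree $n$, which lies outside the range $\{0,\dots,n-1\}$ permitted by McMullen's homogeneous decomposition (the $d=0$ case of Theorem \ref{theorem:HomDecompPolyValBodies}), so $\mu$ vanishes on lower-dimensional bodies; then McMullen's theorem on weakly continuous valuations on polytopes yields $\mu(P)=\vol_n(P)v$, and continuity plus density finishes. Your proposal instead tries to extract simplicity directly from prism dissection. That part breaks down. From $\mu(sK+t[0,v])=\sum_{i+j=n}c_{ij}s^it^j$ and the dissection identity you correctly conclude $c_{ij}=0$ for $j\ge 2$, leaving $\mu(sK+t[0,v])=s^n\mu(K)+s^{n-1}t\,c_{n-1,1}$. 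Setting $s=0$ gives $\mu(t[0,v])=0$, i.e.\ vanishing on segments — but $\mu(K)=c_{n,0}$ is entirely unconstrained by this computation. The claimed ``iteration across codimensions'' is not substantiated, and I do not see how to make it work: adding more transversal segment directions $[0,v_1],\dots,[0,v_{n-k}]$ to a $k$-dimensional $K$ and dissecting in each of them yields an expression that is affine-multilinear in $(t_1,\dots,t_{n-k})$ with a free top coefficient $a_\emptyset(s)=s^n\mu(K)$, compatible with any value of $\mu(K)$; the inductive hypothesis on lower dimensions never enters. The degree bound that actually kills $\mu(K)$ is precisely the homogeneous decomposition applied in dimension $n-1$, which you have available (it follows from the same polynomiality you cite, by Vandermonde) but never invoke.

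The last step is also under-justified. Having $\mu(B)=v_0\vol_n(B)$ for axis-aligned boxes, together with $\tilde\mu$ being symmetric, Minkowski-multilinear, translation-invariant, and vanishing when all arguments lie in a common proper affine subspace, does not by itself identify $\tilde\mu$ with $v_0$ times the mixed volume on all of $(\Kn)^n$. That identification is essentially the statement being proved, and it is not an algebraic consequence of multilinearity alone; one needs either a translative equidissection argument on polytopes or a characterization such as McMullen's polytope theorem before continuity can take over. Evaluating on Minkowski combinations of the $[0,e_i]$ only determines $\tilde\mu$ on axis-aligned parallelepipeds. So both halves of your argument lean on facts that are not proved and are, in effect, the content of the paper's two citations.
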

    \begin{proof}
        The restriction of $\mu$ to polytopes defines a weakly continuous and translation invariant valuation in the sense of \cite{McMullenWeaklyContinuousValuations1983}. Since $\mu$ is homogeneous of degree $n$, its restriction to lower dimensional convex bodies vanishes due to Theorem \ref{theorem:HomDecompPolyValBodies}. Thus, by \cite[Theorem 1]{McMullenWeaklyContinuousValuations1983} there exists $v\in V$ such that $\mu(P)=\vol_n(P)v$ for all polytopes $P\in\Kn$. The valuation $K\mapsto \mu(K)-\vol_n(K)v$ vanishes on polytopes. Therefore, it vanishes on a dense subspace and thus vanishes identically since it is continuous and $V$ is Hausdorff. Thus, $\mu(K)=\vol_n(K)v$
    \end{proof}
    The next result, which should be well known, is a direct consequence of Theorems \ref{theorem:HomDecompPolyValBodies} and \ref{theorem:VolumeCharacterization}. We report a proof for the convenience of the reader. We identify the space of homogeneous polynomials of degree $d$ with coefficients in $V$ with $V \otimes \Sym^d(\R^n)$. More precisely, if $P: \Rn \rightarrow V$ is such polynomial, we consider the identification \[ \sum_{|\alpha|=d} \eta_{\alpha} x^\alpha \longmapsto \sum_{|\alpha|=d} \eta_\alpha \otimes e_\alpha,\] where $\alpha=(\alpha_1,\dots,\alpha_n), \alpha_i \in \{0,\dots,d\},$ is a multi-index and the vectors $e_\alpha $ form the standard base of $\Sym^d(\Rn)$. Under this identification, $e_\alpha$ corresponds to the monomial $x\mapsto e_\alpha(x)\coloneqq x^\alpha$.
	\begin{theorem}
		\label{theorem:TopComponentPolyValBodies}
		For every $\mu\in\PVal_{d,n+d}(\R^n,V)$ there exists a unique polynomial $P=\sum_{|\alpha|=d}\eta_\alpha\otimes e_\alpha\in V \otimes \Sym^d(\R^n)$ such that 
		\begin{align*}
			\mu(K)=\int_KP(x)\, dx\coloneqq \sum_{|\alpha|= d}\left(\int_{K}e_\alpha(x)dx\right)\cdot \eta_\alpha
		\end{align*}
        for all $K\in\Kn$.
	\end{theorem}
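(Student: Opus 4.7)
The plan is to extract the representing polynomial $P$ from the top-degree coefficients of the polynomial $x\mapsto \mu(K+x)$ and then identify $\mu$ with the resulting integral valuation by exploiting a degree mismatch in Theorem~\ref{theorem:PolynomialityPolyValBodies}.

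First I would fix $\mu\in\PVal_{d,n+d}(\R^n,V)$ and write $\mu(K+x)=\sum_{|\beta|\le d}\mu_\beta(K)\,x^\beta$. A Vandermonde-type inversion (entirely analogous to the one in the proof of Theorem~\ref{theorem:HomDecompPolyValBodies}, applied along lines in $\R^n$) expresses each $\mu_\beta$ as a finite linear combination of the maps $K\mapsto \mu(K+x_i)$, so every $\mu_\beta:\Kn\to V$ is a continuous valuation. Comparing coefficients on the two sides of $\mu(K+x+y)=\mu((K+x)+y)$ shows that for $|\beta|=d$ only the $\gamma=\beta$ term contributes to the $y^\beta$-coefficient, yielding $\mu_\beta(K+x)=\mu_\beta(K)$, i.e.\ translation invariance. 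Comparing coefficients of $x^\beta$ in the identity $\mu(tK+tx)=t^{n+d}\mu(K+x)$ then gives $\mu_\beta(tK)=t^{\,n+d-|\beta|}\mu_\beta(K)$. Hence for $|\alpha|=d$, the valuation $\mu_\alpha$ is translation invariant and $n$-homogeneous, so Theorem~\ref{theorem:VolumeCharacterization} provides $\eta_\alpha\in V$ with $\mu_\alpha(K)=\vol_n(K)\,\eta_\alpha$.

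Next I would set $P(x):=\sum_{|\alpha|=d}\eta_\alpha\,x^\alpha$ and consider $\nu(K):=\mu(K)-\int_K P(x)\,dx$. The map $K\mapsto\int_K P(x)\,dx$ belongs to $\PVal_{d,n+d}(\R^n,V)$ by a direct check: translation invariance up to polynomial order $d$ follows from $\int_{K+x}P(y)\,dy=\int_K P(z+x)\,dz$ and the expansion of $(z+x)^\alpha$, while a change of variables gives $(n+d)$-homogeneity. Extracting the $x^\beta$-coefficient of $\int_K P(z+x)\,dz$ for $|\beta|=d$, only the term $\alpha=\beta$ survives and produces $\vol_n(K)\,\eta_\beta=\mu_\beta(K)$. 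Therefore $\nu_\beta\equiv0$ for all $|\beta|=d$, which means $x\mapsto\nu(K+x)$ is a polynomial of degree at most $d-1$; consequently $\nu\in\PVal_{d-1}(\R^n,V)$. Applying Theorem~\ref{theorem:PolynomialityPolyValBodies} to $\nu$, the polynomial $t\mapsto\nu(tK)=t^{n+d}\nu(K)$ has degree at most $n+d-1$ in $t$, which forces $\nu\equiv0$ and proves the representation.

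For uniqueness, if $\int_K P_1(x)\,dx=\int_K P_2(x)\,dx$ for all $K\in\Kn$, then $P:=P_1-P_2$ integrates to zero over every Euclidean ball $B_r(x_0)$; dividing by $\vol_n(B_r(x_0))$ and letting $r\to0$ yields $P(x_0)=0$ by continuity of $P$, hence $P_1=P_2$. The only delicate step is the top-coefficient matching in the third paragraph, which amounts to careful multi-index bookkeeping of $(z+x)^\alpha$; once that identity is in place, the collapse $\nu\equiv0$ is forced by the incompatibility of the homogeneity degree $n+d$ with the degree constraint in Theorem~\ref{theorem:PolynomialityPolyValBodies}, which renders $\PVal_{d-1,n+d}(\R^n,V)$ trivial.
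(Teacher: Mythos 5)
Your proof is correct and follows essentially the same path as the paper: extract the top-order coefficients of $x\mapsto\mu(K+x)$ by Vandermonde inversion, show they are translation invariant and $n$-homogeneous, apply Theorem~\ref{theorem:VolumeCharacterization} to get the representing polynomial, and kill the remainder by observing it lies in $\PVal_{d-1,n+d}(\R^n,V)$, which is trivial by the degree bound in Theorem~\ref{theorem:PolynomialityPolyValBodies}. The only cosmetic differences are that you index by monomials $\mu_\beta$ rather than grouping into homogeneous tensors $\nu_i$, and you spell out the uniqueness via a ball-averaging argument, which the paper leaves implicit.
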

    \begin{proof}
        Since $\mu\in \PVal_d(\R^n,V)$, there exist maps (the coefficients of the homogeneous components of the polynomial associated to $\mu$) $\nu_i:\Kn\rightarrow V\otimes \Sym^i(\R^n)$ such that
        \begin{align*}
            \mu(K+x)=\sum_{i=0}^d\nu_i(K)[x].
        \end{align*}
        Fix $0<t_0<\dots<t_d$. Then
        \begin{align*}
            \mu(K+t_jx)=\sum_{i=0}^dt_j^i\nu_i(K)[x].
        \end{align*}
        Therefore, using the inverse of the Vandermonde matrix again, we obtain constants $c_{ij}$ independent of $K\in\Kn$ and $x\in\R^n$ such that
        \begin{align*}
            \nu_i(K)[x]=\sum_{j=0}^d c_{ij}\mu(K+t_jx).
        \end{align*}
        In particular, $\nu_i:\Kn\rightarrow V \otimes \Sym^i(\R^n)$ is a continuous valuation. Notice that
        \begin{align*}
            \sum_{i=0}^d\nu_i(K)[x+x']=\mu(K+x+x')=\sum_{i=0}^d\nu_i(K+x)[x'].
        \end{align*}
        Considered as a polynomial in $x'\in\R^n$, the highest order term is given by $\nu_d(K)[x']$ for the left-hand side of this equation, while it is given by $\nu_d(K+x)[x']$ for the right-hand side. Consequently, $\nu_d$ is translation invariant. If $t>0$, then
        \begin{align*}
            t^{n+d}\sum_{i=0}^d\nu_i(K)[x]=t^{n+d}\mu(K+x)=\mu(tK+tx)=\sum_{i=0}^dt^i\nu_i(tK)[x]
        \end{align*}
        since $\mu$ is homogeneous of degree $n+d$. Comparing coefficients as polynomials in $x$, we obtain $t^{n+d}\nu_d(K)[x]=t^d\nu_d(t K)[x]$, which shows that $\nu_d(tK)=t^n\mu(K)$. In total, we obtain $\nu_d\in \Val_n(\R^n, V \otimes \Sym^d(\R^n))$. Due to Hadwiger's volume characterization in Theorem \ref{theorem:VolumeCharacterization}, we obtain a polynomial $P=\sum_{|\alpha|=d}\eta_\alpha\otimes e_\alpha\in V \otimes \Sym^d(\R^n)$ such that
        \begin{align*}
            \nu_d(K)[x]=P(x)\vol_n(K).
        \end{align*}
        
        Consider the valuation $\tilde{\mu}\in \PVal_{d,n+d}(\R^n,V)$ defined by
        \begin{align*}
            \tilde{\mu}(K)=\int_{K}P(x)\, dx=\sum_{|\alpha|= d}\left(\int_{K}e_\alpha(x)dx\right)\cdot \eta_\alpha.
        \end{align*}
        Then $\tilde{\mu}$ defines a continuous polynomial valuation of degree at most $d$. Moreover, it is easy to see that $\tilde{\mu}(K+x)$ and $\nu_d(K)[x]$ agree up to lower order terms in $x$. In particular, $\mu-\tilde{\mu}\in \PVal_{d-1,n+d}(\R^n,V)$ is a polynomial valuation of degree at most $d-1$. Since $\PVal_{d-1,n+d}(\R^n,V)$ is trivial by Theorem \ref{theorem:HomDecompPolyValBodies}, this implies $\mu=\tilde{\mu}$.
    \end{proof}

\subsection{From valuations on convex functions to valuations on convex bodies}

    Let $V$ denote a Hausdorff topological vector space over the real numbers.
    In the remainder of the article, we will always assume that $\cone$ is an $(\A,\Oc)$-cone for $\Oc \subseteq \A \subset \Rn$ fixed closed convex sets with non-empty interior.
	\begin{lemma}
		\label{lemma:injective}
		For $Z:\cone\rightarrow V$ define $\mu:\KN\rightarrow V$ by $\mu=Z\circ H_\A$. Then the following holds:
		\begin{enumerate}[itemsep=0pt]
            \item If $Z$ is a valuation, then so is $\mu$,
			\item If $Z$ is continuous, then so is $\mu$,
            \item If $Z$ is polynomial of degree at most $d$, then so is $\mu$,
            \item If $Z$ is homogeneous of degree $k$, then so is $\mu$.
        \end{enumerate}
        In particular, the following holds if $Z$ is continuous:
        \begin{enumerate}[itemsep=0pt]
        \item[(a)]\label{lemma:injective1} If $\mu=0$, then $Z=0$.
		\item[(b)] The functional $Z$ is homogeneous of degree $k$ if and only if $\mu$ is homogeneous of degree $k$.
		\end{enumerate}
	\end{lemma}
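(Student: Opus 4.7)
The plan is to pass the relevant properties of $Z$ through the map $H_\A$ and exploit the density of $H_\A(\KN)$ in $\cone$ (Lemma~\ref{lemma:dual_density}). The basic algebraic identities I will use are, for $K,L\in\KN$ with $K\cup L\in\KN$,
\begin{align*}
h_K\vee h_L=h_{K\cup L},\qquad h_K+h_L=h_{K\cup L}+h_{K\cap L},
\end{align*}
which together with the elementary $\min(a,b)=a+b-\max(a,b)$ give $h_K\wedge h_L=h_{K\cap L}$. Restricting to $(\cdot,-1)$ and using that $tI_\A=I_\A$ for $t>0$ and that $I_\A\vee I_\A=I_\A\wedge I_\A=I_\A$, I obtain the two key identities
\begin{align*}
H_\A(K)\vee H_\A(L)=H_\A(K\cup L),\qquad H_\A(K)\wedge H_\A(L)=H_\A(K\cap L),\qquad H_\A(tK)=tH_\A(K).
\end{align*}
Both right-hand sides lie in $\cone$ by the $(\A,\Oc)$-cone axioms.

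With these in hand, the four implications follow almost mechanically. For (1), the valuation identity for $Z$ applied to $f=H_\A(K),\, g=H_\A(L)$ immediately becomes $\mu(K\cup L)+\mu(K\cap L)=\mu(K)+\mu(L)$. Item (2) is just the composition of the continuous map $H_\A$ (Lemma~\ref{lemma:HAContinuous}) with the continuous $Z$. For (3), a translation $K+x$ with $x=(x',x_{n+1})\in\R^{n+1}$ yields $h_{K+x}(\cdot,-1)=h_K(\cdot,-1)+\ell_x$ with $\ell_x(y)=\langle y,x'\rangle-x_{n+1}\in\mathrm{Aff}(n)$; since $x\mapsto \ell_x$ is linear $\R^{n+1}\to\mathrm{Aff}(n)$ and $\ell\mapsto Z(H_\A(K)+\ell)$ is polynomial of degree at most $d$ by assumption, so is $x\mapsto \mu(K+x)$. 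For (4), homogeneity of $Z$ and $H_\A(tK)=tH_\A(K)$ give $\mu(tK)=Z(tH_\A(K))=t^kZ(H_\A(K))=t^k\mu(K)$.

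For the final two assertions under the continuity hypothesis, I invoke the density from Lemma~\ref{lemma:dual_density}. Statement (a) is immediate: if $\mu=0$, then $Z$ vanishes on the dense set $H_\A(\KN)\subset \cone$, hence on all of $\cone$ by continuity. For (b), one direction is (4); for the converse, assuming $\mu(tK)=t^k\mu(K)$ for all $K\in\KN$, the continuous map $\cone\to V$ given by $f\mapsto Z(tf)-t^kZ(f)$ (which is continuous by Lemma~\ref{lemma:cont_add} and continuity of $Z$) vanishes on the dense subset $H_\A(\KN)$, hence identically.

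The only point that needs a little care is checking the pointwise-minimum identity $H_\A(K)\wedge H_\A(L)=H_\A(K\cap L)$, since the pointwise minimum of two convex functions is generally not convex; here it is precisely the Minkowski-type identity $h_K+h_L=h_{K\cup L}+h_{K\cap L}$ (valid when $K\cup L$ is convex) that rescues convexity and lets the valuation axiom for $Z$ transfer cleanly to $\mu$. Aside from this, the argument is purely formal and the density/continuity step is the only analytic ingredient.
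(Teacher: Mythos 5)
Your proof is correct and follows essentially the same route as the paper's: transfer the valuation, continuity, polynomiality, and homogeneity properties through the algebraic identities $H_\A(K)\vee H_\A(L)=H_\A(K\cup L)$, $H_\A(K)\wedge H_\A(L)=H_\A(K\cap L)$, $H_\A(K+X)=H_\A(K)+\ell_X$, $H_\A(tK)=tH_\A(K)$, and then use density of the image of $H_\A$ (Lemma~\ref{lemma:dual_density}) for (a) and (b). The only cosmetic difference is that you spell out the derivation of $h_K\wedge h_L=h_{K\cap L}$ from the additive support-function identity, whereas the paper asserts it directly; both are standard and correct.
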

	\begin{proof}
		\begin{enumerate}

            \item Let $K_1,K_2 \in \KN$ such that $K_1\cup K_2 \in \KN$. By the definition of $H_\A$,
  \begin{align*}
      H_\A(K_1)\vee H_\A(K_2)&=(h_{K_1}(\cdot,-1)+I_\A)\vee (h_{K_2}(\cdot,-1)+I_\A) \\
      &=\left( h_{K_1}(\cdot,-1)\vee h_{K_2}(\cdot,-1)\right)+I_\A=h_{K_1 \cup K_2}(\cdot,-1)+I_\A\\
      &=H_\A(K_1 \cup K_2).
  \end{align*}
  analoguely,
  \begin{align*}
      H_\A(K_1)\wedge H_\A(K_2)&=(h_{K_1}(\cdot,-1)+I_\A)\wedge (h_{K_2}(\cdot,-1)+I_\A)  \\
      &=\left( h_{K_1}(\cdot,-1)\wedge h_{K_2}(\cdot,-1)\right)+I_\A=h_{K_1 \cap K_2}(\cdot,-1)+I_\A\\
      &=H_\A(K_1 \cap K_2),
  \end{align*}
  proving \textit{1}. 
            \item This is a direct consequence of Lemma \ref{lemma:HAContinuous}.
			\item Let $X=(v,t)\in \R^n\times \R$. Then,
				\begin{align*}
					H_\A(K+X)[x]=h_{K+X}(x,-1)+I_\A=h_{K}(x,-1)+\langle v,x\rangle -t+I_\A,
				\end{align*}
				and therefore
				\begin{align*}
					Z\circ H_\A (K+X)=Z(H_\A(K)+\langle v,\cdot\rangle -t).
				\end{align*} 
				In particular, if $Z$ is polynomial of degree at most $d$, then the same applies to $Z\circ H_\A$. 
			\item Assume that $Z$ is $k$-homogeneous. Since $H_\A(tK)=h_{tK}(\cdot,-1)+I_\A=th_{K}(\cdot,-1)+tI_\A=tH_\A(K)$ for $t>0$, 
			\begin{align*}
				Z\circ H_\A(tK)=Z(tH_\A(K))=t^kZ(K)
			\end{align*}
			for all $K\in\KN$. Thus $Z\circ H_\A$ is $k$-homogeneous.
		\end{enumerate}

    Assume now that $Z$ is continuous.
    \begin{itemize}
        \item[(a)]
			Since the image of $H_\A$ is dense in $\cone$ by Lemma \ref{lemma:dual_density}, $Z\circ H_\A=0$ implies that $Z$ vanishes on a dense set and thus has to vanish identically by continuity.

        \item[(b)] Assume that $Z\circ H_\A$ is $k$-homogeneous. For $t>0$ consider the valuation $Z_t:\Gamma\rightarrow V$ defined by $Z_t(f)=Z(tf)-t^kZ(f)$, which is continuous by Lemma~\ref{lemma:cont_add}. Then $Z_t\circ H_\A=0$ since $Z\circ H_\A$ is $k$-homogeneous. Thus, $Z_t=0$ for all $t>0$ by (a), which shows that $Z$ is $k$-homogeneous.
    \end{itemize}
		
	\end{proof}
		
\section{Homogeneous decomposition}\label{sec:homogeneous}
Throughout this section, $V$ denotes a Hausdorff topological vector space over the real numbers.
We now prove a homogeneous decomposition result, following the strategy in \cite[Theorem 1]{Knoerrsupportduallyepi2021}. Instead of McMullen's homogeneous decomposition Theorem \cite{McMullenContinuoustranslationinvariant1980}, we use the polynomial variant given by Theorem \ref{theorem:HomDecompPolyValBodies}.
\begin{proposition}\label{prop:hom_dec}
    For every $Z \in \PV_d(\Gamma,V)$, there exist unique $Z_k \in \PV_{d,k}(\cone,V)$, $k \in \{0,1,\dots,n+1+d\}$ such that \[ Z=Z_0+\cdots+Z_{n+1+d}.\] 
    In particular, 
    \begin{align*}
        \PV_d(\cone, V)=\bigoplus_{k=0}^{n+1+d}\PV_{d,k}(\cone, V).
    \end{align*}
\end{proposition}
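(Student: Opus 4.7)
The plan is to transfer the problem from valuations on $\cone$ to polynomial valuations on convex bodies in $\R^{n+1}$ via the map $H_\A$, apply the homogeneous decomposition already established for the latter class (Theorem \ref{theorem:HomDecompPolyValBodies}), and then pull the decomposition back to $\cone$ using density and continuity.

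More precisely, given $Z \in \PV_d(\cone, V)$, set $\mu \coloneqq Z \circ H_\A$. By Lemma \ref{lemma:injective}, $\mu \in \PVal_d(\R^{n+1}, V)$, so Theorem \ref{theorem:HomDecompPolyValBodies} applied in dimension $n+1$ yields a unique decomposition $\mu = \sum_{k=0}^{n+1+d} \mu_k$ with $\mu_k \in \PVal_{d,k}(\R^{n+1}, V)$. The key observation from the proof of Theorem \ref{theorem:HomDecompPolyValBodies} is that, after fixing any $0 < t_0 < \cdots < t_{n+1+d}$, there exist constants $c_{kj} \in \R$ (independent of the valuation) such that $\mu_k(K) = \sum_{j=0}^{n+1+d} c_{kj}\, \mu(t_j K)$. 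Motivated by this, I would define
\begin{align*}
	Z_k(f) \coloneqq \sum_{j=0}^{n+1+d} c_{kj}\, Z(t_j f), \qquad f\in\cone,\ k\in\{0,1,\dots,n+1+d\}.
\end{align*}

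The next step is to verify that each $Z_k$ has the required properties. Continuity of $Z_k$ follows from Lemma \ref{lemma:cont_add} (scaling is continuous on $\cone$) together with continuity of $Z$. The valuation property is inherited from $Z$ since $t_j(f\vee h) = (t_j f)\vee (t_j h)$ and similarly for $\wedge$, so the sum of compositions is still a valuation. Polynomiality of degree at most $d$ is immediate because each summand $f\mapsto Z(t_j f)$ is polynomial of degree at most $d$ after adding an affine function. Using $H_\A(tK) = tH_\A(K)$ for $t>0$, one computes
\begin{align*}
	Z_k \circ H_\A(K) = \sum_{j=0}^{n+1+d} c_{kj}\, Z(t_j H_\A(K)) = \sum_{j=0}^{n+1+d} c_{kj}\, \mu(t_j K) = \mu_k(K),
\end{align*}
so $Z_k\circ H_\A = \mu_k$ is $k$-homogeneous. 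Lemma \ref{lemma:injective}(b) then upgrades this to $k$-homogeneity of $Z_k$ on all of $\cone$.

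To obtain the decomposition, note that $\bigl(Z - \sum_k Z_k\bigr) \circ H_\A = \mu - \sum_k \mu_k = 0$, and Lemma \ref{lemma:injective}(a) yields $Z = \sum_{k=0}^{n+1+d} Z_k$. For uniqueness, if $Z = \sum_k Z_k = \sum_k Z_k'$ are two such decompositions, then for every $f\in\cone$ and $t>0$ one has $\sum_{k=0}^{n+1+d} t^k (Z_k(f) - Z_k'(f)) = 0$ by homogeneity; as this is a polynomial identity in $t$ with coefficients in $V$, each coefficient must vanish, proving $Z_k = Z_k'$. The main technical point, which is neatly bypassed by the Vandermonde construction above, is extending information from the dense subset $H_\A(\K^{n+1})\subset\cone$ to all of $\cone$; here this is handled by combining Lemma \ref{lemma:dual_density} with the continuity provided by Lemma \ref{lemma:cont_add}.
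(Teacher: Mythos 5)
Your proof is correct and follows essentially the same route as the paper's: both push $Z$ to a polynomial valuation $\mu = Z\circ H_\A$ on $\KN$, invoke Theorem \ref{theorem:HomDecompPolyValBodies} in dimension $n+1$, extract the homogeneous components via the Vandermonde identity $\mu_k = \sum_j c_{kj}\,\mu(t_j\,\cdot)$, define $Z_k = \sum_j c_{kj}\,Z(t_j\,\cdot)$ so that $Z_k\circ H_\A = \mu_k$, and then use Lemma \ref{lemma:injective} together with density of the image of $H_\A$ to transfer homogeneity and the decomposition from $\KN$ back to $\cone$. Your explicit verification that each $Z_k$ is a continuous polynomial valuation, and your Vandermonde-based uniqueness argument, are minor additions that the paper leaves implicit; otherwise the arguments coincide.
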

\begin{proof}
    For $t> 0$, consider the continuous polynomial valuations $Z^t$ defined by $Z^t(u)=Z(t u)$.
    By Lemma \ref{lemma:injective}, we have that 
    \begin{align*}
        \mu^t \colon \KN &\rightarrow V\\
        K &\mapsto Z^t(H_\A(K))
    \end{align*}
    is a continuous and polynomial valuation on $\KN$ of degree at most $d$. Thus, by Theorem \ref{theorem:HomDecompPolyValBodies} there exist unique  valuations $\mu_k$, $k \in \{0,\dots,n+1+d\}$, which are polynomial of degree at most $d$, $k$-homogeneous, continuous, and such that
    \begin{equation}\label{eq:hom_dec}
        \mu^t=\sum_{k=0}^{n+1+d} t^k \mu_k.
    \end{equation}
    Choose $0< t_0<\cdots <t_{n+1+d}$. Plugging these values into \eqref{eq:hom_dec} and considering the inverse of the Vandermonde matrix, we can find values $c_{ki}\in\R$ for $k=0,\dots,n+1+d$ such that \[\mu_k=\sum_{i=0}^{n+1+d} c_{ki}\mu^{t_i}.\]

    If we define $Z_k\coloneqq \sum_{i=0}^{n+1+d} c_{ki}Z^{t_i}$, then $\mu_k=Z_k\circ H_\A$, which shows that $Z_k$ is uniquely determined by $\mu_k$. Note that the valuations \[Z_k^t-t^k Z_k,\quad t>0, \,~k=0,\dots,n+1+d,\] vanish on the image of $H_\A$ by construction. Thus, by Lemma \ref{lemma:injective1}, $Z_k^t=t^k Z_k$, so $Z_k$ is homogeneous of degree $k$. Similarly, the valuation $Z-\sum_{k=0}^{n+d+1}Z_k$ vanishes on the image of $H_\A$, and thus has to vanish identically. Thus $Z=\sum_{k=0}^{n+d+1}Z_k$. It is clear that the decomposition is unique.
\end{proof}

\begin{proposition}\label{proposition:PolynomialTopComponentWRTDualTranslations}
	For $Z\in \PV_{d,k}(\cone,V)$, let $Y_i:\cone\rightarrow V\otimes \Sym^{i}(\mathrm{Aff}(n))$ be the unique functions with
    \begin{align*}
        Z(f+\ell)=\sum_{i=0}^d Y_i(f)[\ell]\quad\text{for}~f\in\cone,~\ell\in \mathrm{Aff}(n).
    \end{align*}
    Then  $Y_i\in \PV_{d-i,k-i}(\cone,V \otimes \Sym^i(\mathrm{Aff}(n)))$ and
	\begin{align*}
		\ell \mapsto \left( Z(f+\ell)-Y_d(f) \right) [\ell], \qquad \ell \in \mathrm{Aff}(n),
	\end{align*}
	is a polynomial of degree at most $d-1$. In particular, if $Y_d=0$, then $Z$ is polynomial of degree at most $d-1$.
	\end{proposition}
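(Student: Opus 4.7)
The plan is to exploit the polynomial structure of $\ell \mapsto Z(f+\ell)$ via a Vandermonde extraction of the $Y_i$, and then read off all four required properties (continuity, valuation, polynomial degree, homogeneity) directly from that extraction formula.

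First, fix $f \in \cone$. By assumption, $\ell \mapsto Z(f+\ell)$ is a polynomial of degree at most $d$ on $\mathrm{Aff}(n) \cong \R^{n+1}$ with values in $V$; its decomposition into homogeneous components in $\ell$ uniquely defines the $Y_i(f) \in V \otimes \Sym^i(\mathrm{Aff}(n))$. To make them explicit, I would pick $0 < t_0 < \cdots < t_d$ and substitute $t_j \ell$ in place of $\ell$ to get
\begin{align*}
    Z(f + t_j \ell) = \sum_{i=0}^d t_j^i \, Y_i(f)[\ell], \qquad j = 0, \dots, d.
\end{align*}
Inverting the Vandermonde matrix $(t_j^i)_{ij}$ yields constants $c_{ij} \in \R$, independent of $f$ and $\ell$, such that
\begin{align*}
    Y_i(f)[\ell] = \sum_{j=0}^d c_{ij} \, Z(f + t_j \ell).
\end{align*}

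From this formula, continuity and the valuation property of $Y_i(\cdot)[\ell]$ for each fixed $\ell$ follow from the fact that $f \mapsto Z(f + t_j\ell)$ is a continuous valuation on $\cone$ (using Lemma~\ref{lemma:cont_add} and that $\cone$ is closed under affine additions). Since $V \otimes \Sym^i(\mathrm{Aff}(n))$ is a finite direct sum of copies of $V$, this lifts to continuity and the valuation property of $Y_i$ as a tensor-valued map. For the polynomial degree bound, I would substitute $f + \ell'$ in the Vandermonde formula:
\begin{align*}
    Y_i(f + \ell')[\ell] = \sum_{j=0}^d c_{ij} \, Z(f + \ell' + t_j \ell).
\end{align*}
For $f$ fixed, $m \mapsto Z(f+m)$ is polynomial of degree $\le d$ on $\mathrm{Aff}(n)$, so $(\ell, \ell') \mapsto Z(f+\ell+\ell')$ has total degree $\le d$ on $\mathrm{Aff}(n)\times\mathrm{Aff}(n)$. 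Expanding into bihomogeneous components $\sum_{a+b \leq d} A_{a,b}(f)[\ell,\ell']$ and matching degrees in $\ell$ shows $Y_i(f+\ell')[\ell] = \sum_{b \le d-i} A_{i,b}(f)[\ell,\ell']$, which is polynomial of degree $\le d-i$ in $\ell'$. For homogeneity, starting from $t^k Z(f+\ell) = Z(tf + t\ell)$, expanding both sides via the $Y_j$-decomposition gives $\sum_i t^k Y_i(f)[\ell] = \sum_i t^i Y_i(tf)[\ell]$; comparing coefficients of the degree-$i$ part in $\ell$ yields $Y_i(tf) = t^{k-i} Y_i(f)$.

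The final assertions are then immediate: $Z(f+\ell) - Y_d(f)[\ell] = \sum_{i=0}^{d-1} Y_i(f)[\ell]$ is visibly a polynomial of degree at most $d-1$ in $\ell$, and consequently, if $Y_d \equiv 0$, then $\ell \mapsto Z(f+\ell)$ itself has degree at most $d-1$ for every $f \in \cone$, so $Z \in \PV_{d-1,k}(\cone, V)$. No step is particularly delicate; the entire argument is bookkeeping once the Vandermonde formula is in place, with the main care being the bi-degree expansion, which simultaneously controls the polynomial degree of $Y_i$ and, via degree matching in $\ell$, also its homogeneity.
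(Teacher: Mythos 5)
Your proof is correct and follows essentially the same route as the paper: Vandermonde extraction to express $Y_i(f)[\ell]$ as a finite linear combination of $Z(f+t_j\ell)$ (giving continuity and the valuation property), coefficient comparison in $t$ to get $(k-i)$-homogeneity, and degree matching in $\ell$ from the identity $\sum_i Y_i(f+\ell')[\ell] = \sum_i Y_i(f)[\ell+\ell']$ to control the degree in $\ell'$. The one cosmetic difference is in the last step: the paper first isolates the top coefficient, shows $Y_d$ is translation invariant (degree $0$), and then obtains the bound on the lower $Y_i$ by ``repeating this argument'' inductively, whereas you carry out the bihomogeneous expansion once and read off $\deg_{\ell'} Y_i(f+\ell')[\ell] \le d-i$ for all $i$ simultaneously from the constraint $a+b\le d$. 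Your version is arguably more explicit about why the induction goes through; the conclusion and the surrounding bookkeeping are identical.
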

	\begin{proof}
		By definition,
		\begin{align*}
			Z(f+t\ell)=\sum_{i=0}^{d}t^iY_i(f)[\ell]\quad \text{for}~t\in\R.
		\end{align*}
		
		Plugging in $t=1,\dots, d+1$ and using the inverse of the Vandermonde matrix, we obtain constants $c_{ij}$ independent of $f$ and $\ell$ such that
		\begin{align*}
			Y_i(f)[\ell]=\sum_{i=1}^{d+1}c_{ij}Z(f+j\ell).
		\end{align*}
		The right-hand side is a continuous polynomial valuation, so the same holds for $Y_i$. Thus $Y_i\in \PV_{d}(\cone,V\otimes \Sym^{d-i}(\mathrm{Aff}(n))$. Moreover, for $t>0$,
		\begin{align*}
			\sum_{i=0}^{d}t^{k}Y_i(f)[\ell]=t^kZ(f+\ell)=Z(tf+t\ell)=\sum_{i=0}^{d}Y_i(tf)[t\ell]=\sum_{i=0}^{d}t^{i}Y_i(tf)[\ell].
		\end{align*}
		Comparing coefficients, we see that $Y_i$ is homogeneous of degree $k-i$. Next, note that for $\ell,\ell':\R^n\rightarrow \R$ affine,
		\begin{align*}
			\sum_{i=0}^{d}Y_i(f+\ell')[\ell]=Z(f+\ell'+\ell)=\sum_{i=0}^{d}Y_i(f)[\ell+\ell'].
		\end{align*}
		As a polynomial in $\ell$, the $i=d$ homogeneous component is given by $ Y_d(f+\ell')[\ell]$ on the left-hand side, while it is given by $ Y_d(f)[\ell]$ on the right-hand side of this equation. Comparing coefficients, we thus obtain $ Y_d(f+\ell')[\ell]= Y_d(f)[\ell]$, and therefore $ Y_d$ is invariant with respect to dual epi-translations, that is, $ Y_d\in \PV_{0,k-d}(\cone, V \otimes \Sym^d(\mathrm{Aff}(n)))$. Moreover, 
		\begin{align*}
			Z(f+\ell)-Y(f)[\ell]=\sum_{i=0}^{d-1}Y_i(f)[\ell]
		\end{align*}
		is a polynomial of degree at most $d-1$ in $\ell$. Repeating this argument, we obtain that $Y_i\in \PV_{d-i,k-i}(\cone,V \otimes \Sym^i(\mathrm{Aff}(n)))$ by induction.
	\end{proof}
    Theorem \ref{maintheorem:HomogeneousDecomposition} now follows from Proposition \ref{prop:hom_dec} and the following result.
    \begin{corollary}\label{cor:TrivialComponent}
		If $Z\in \PV_{d,n+1+d}(\cone,V)$, then $Z=0$.
	\end{corollary}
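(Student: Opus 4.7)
The plan is to pull $Z$ back to a valuation on $\KN=\mathcal{K}^{n+1}$ along $H_\A$ and exploit a hidden invariance that forces the induced valuation to vanish. By Lemma \ref{lemma:injective}, the functional $\mu\coloneqq Z\circ H_\A$ is a continuous valuation on $\KN$ that is polynomial of degree at most $d$ and homogeneous of degree $n+1+d$. This is precisely the top-degree situation of Theorem \ref{theorem:TopComponentPolyValBodies} applied in ambient dimension $n+1$, so there exists a polynomial $P\in V\otimes \Sym^d(\R^{n+1})$ with
$$\mu(K)=\int_K P(x)\,dx\quad\text{for all }K\in\KN.$$
Combined with Lemma \ref{lemma:injective}(a), it will suffice to show $P\equiv 0$.

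The crucial geometric observation is that $H_\A$ is highly non-injective in one specific direction. Set $e_{n+1}\coloneqq (0,\dots,0,1)\in \R^{n+1}$; then
$$h_{[0,e_{n+1}]}(x,-1)=\max\{0,-1\}=0.$$
Since support functions are additive under Minkowski sums, this gives $H_\A(K+T[0,e_{n+1}])=H_\A(K)$ for every $K\in\KN$ and every $T>0$. Applying $Z$ to both sides and using the integral representation of $\mu$, one obtains
$$\int_{K+T[0,e_{n+1}]} P(x)\,dx=\int_K P(x)\,dx\quad\text{for all }K\in\KN,\,T>0.$$

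To finish, I would specialize to cylinders $K=A\times[t_1-\delta,t_1]$ with $A\in\K^n$, $\delta>0$, and $t_1\in\R$, so that $K+T[0,e_{n+1}]=A\times[t_1-\delta,t_1+T]$. The displayed identity then reduces via Fubini to
$$\int_A\int_{t_1}^{t_1+T} P(y,t)\,dt\,dy=0$$
for every $A\in\K^n$, $t_1\in\R$, and $T>0$. Differentiating in $T$ and varying $t_1$ yields $\int_A P(y,s)\,dy=0$ for every $A\in\K^n$ and every $s\in\R$; since $P(\cdot,s)$ is a polynomial in $y$, this forces $P(\cdot,s)\equiv 0$ for every $s$, so $P\equiv 0$. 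The main obstacle is spotting the hidden invariance $H_\A(K+T[0,e_{n+1}])=H_\A(K)$; once identified, the remainder is a brief Fubini computation paired with the density statement already available through Lemma \ref{lemma:injective}(a).
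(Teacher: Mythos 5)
Your proof is correct, and it takes a genuinely different route than the paper's. Both arguments turn on the same hidden invariance $H_\A(K+T[0,e_{n+1}])=H_\A(K)$, but they wrap it in different machinery. The paper first invokes Proposition~\ref{proposition:PolynomialTopComponentWRTDualTranslations} to peel off the top coefficient $Y_d\in \PV_{0,n+1}(\cone,V\otimes\Sym^d(\mathrm{Aff}(n)))$ of the polynomial $\ell\mapsto Z(f+\ell)$, kills $Y_d$ by applying Hadwiger's volume characterization (Theorem~\ref{theorem:VolumeCharacterization}) to $Y_d\circ H_\A$ together with the degenerate cylinders $K_R=B_1(0)\times[0,R]$, and then concludes $Z\in\PV_{d-1,n+1+d}(\cone,V)=\{0\}$ from the degree count in Proposition~\ref{prop:hom_dec}. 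You instead pull $Z$ all the way back to $\mu=Z\circ H_\A\in\PVal_{d,(n+1)+d}(\R^{n+1},V)$, apply the full integral characterization Theorem~\ref{theorem:TopComponentPolyValBodies} in ambient dimension $n+1$, and verify $P\equiv 0$ directly from the cylinder invariance via a Fubini/differentiation computation, concluding by Lemma~\ref{lemma:injective}(a). Your version is shorter and requires nothing from Section~3, at the mild cost of invoking the heavier Theorem~\ref{theorem:TopComponentPolyValBodies} (whose own proof already contains the same Hadwiger input). The paper's route keeps the argument closer to the valuations-on-functions framework and reuses Proposition~\ref{proposition:PolynomialTopComponentWRTDualTranslations}, which it needs again later (Lemma~\ref{lemma:supportTranslationPolynomials}). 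One cosmetic nit: in the displayed chain you should apply $\mu$, not $Z$, to $K+T[0,e_{n+1}]$ --- this is what you clearly intend, since $\mu=Z\circ H_\A$.
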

    \begin{proof}
        By Proposition \ref{proposition:PolynomialTopComponentWRTDualTranslations} there exists a unique $Y\in \PV_{0,n+1}(\cone,V\otimes\Sym^d(\mathrm{Aff}(n)))$ such that
        \begin{align*}
            Z(f+\ell)-Y(f)[\ell]
        \end{align*}
        is a polynomial of degree at most $d-1$ in $\ell\in\mathrm{Aff}(n)$. We will show that $Y=0$, so $Z$ is polynomial of degree at most $d-1$, i.e. $Z\in \PV_{d-1,n+d+1}(\cone,V))$, which is trivial by Proposition \ref{prop:hom_dec}. Set $\tilde{V}\coloneqq V\otimes \Sym^d(\mathrm{Aff}(n))$ and consider $\mu\coloneqq Y\circ H_\A$. Then $\mu\in \PVal_{0,n+1}(\R^{n+1},\tilde{V})$ by Lemma \ref{lemma:injective}. Moreover, it is sufficient to show that $\mu=0$. By Hadwiger's volume characterization in Theorem \ref{theorem:VolumeCharacterization}, there exists $v\in \tilde{V}$ such that $\mu(K)=v\vol_{n+1}(K)$ for all $K\in\KN$. Let $K_R=B_1(0)\times[0,R]\subset \R^n\times\R$ be given for $R\ge0$. Since $H_\A(K_R)=H_\A(K_0)$ by construction, we obtain 
        \begin{align*}
            0=v\vol_{n+1}(K_0)=\mu(K_0)=\mu(K_R)=vR\vol_{n}(B_1(0))
        \end{align*}
        for $R>0$, so $v=0$. In particular, $\mu=0$.
    \end{proof}
Let us add some consequences of this result that will be used in later sections.
\begin{corollary}
    \label{corollary:VanishingOnIndicators}
    Let $Z\in \PV_{d,k}(\cone,V)$, $k\ge 1$. If $I_A\in \cone$ for some closed convex set $A\subset \R^n$, then $Z(I_A)=0$.
\end{corollary}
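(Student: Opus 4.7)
The plan is to exploit the fact that indicator functions are invariant under positive scaling and combine this with the $k$-homogeneity of $Z$.

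First I would observe that for any closed convex set $A \subset \R^n$ and any $t > 0$, the function $t \cdot I_A$ agrees with $I_A$ pointwise: on $A$ both take the value $0$, and on the complement of $A$ both take the value $\infty$ (using the convention $t \cdot \infty = \infty$ for $t > 0$). Hence $t I_A = I_A$ as elements of $\fconv$.

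Next, since $Z$ is $k$-homogeneous by assumption, we have
\begin{align*}
    Z(I_A) = Z(t I_A) = t^k Z(I_A)
\end{align*}
for every $t > 0$. Picking any $t > 0$ with $t^k \neq 1$ (for instance $t = 2$, which works because $k \geq 1$), we obtain $(t^k - 1) Z(I_A) = 0$ in the vector space $V$, forcing $Z(I_A) = 0$.

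There is essentially no obstacle here; the statement is an immediate two-line consequence of homogeneity once one notices the scale-invariance of indicator functions, and no appeal to the heavier machinery developed in the paper is needed beyond the definition of $\PV_{d,k}(\cone, V)$.
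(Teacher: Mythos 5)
Your proof is correct and follows essentially the same approach as the paper: both exploit $t I_A = I_A$ together with $k$-homogeneity to get $Z(I_A) = t^k Z(I_A)$ for all $t>0$. The only cosmetic difference is that you conclude by choosing a specific $t$ with $t^k\neq 1$, whereas the paper lets $t\to 0$; your version is, if anything, slightly cleaner since it is purely algebraic.
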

\begin{proof}
    This follows because $t I_A=I_A$ for all $t>0$, so $Z(I_A)=Z(tI_A)=t^k(I_A)$ for all $t>0$, which implies $Z(I_A)=0$ by taking the limit $t\rightarrow 0$.
\end{proof}

The following result follows from Proposition \ref{prop:hom_dec} using a standard argument, compare \cite[Section 4.2]{Knoerrsupportduallyepi2021}.
\begin{corollary}
    \label{corollary:existencePolarization}
	For every $Z\in\PV_{d,k}(\cone, V)$ there exists a unique continuous map $\overline{Z}:\cone^k\rightarrow V$ such that
	\begin{enumerate}[itemsep=0pt]
		\item $\overline{Z}$ is symmetric,
		\item $\overline{Z}$ is additive in each argument: for $f,h,f_2,\dots,f_k\in\cone$, $t>0$:
		\begin{align*}
			\overline{Z}(t f+h,f_2,\dots,f_k)=t\overline{Z}(f,f_2,\dots,f_k)+\overline{Z}(h,f_2,\dots,f_k)
		\end{align*}
		\item $\overline{Z}(f,\dots,f)=Z(f)$ for $f\in\cone$.
	\end{enumerate}
\end{corollary}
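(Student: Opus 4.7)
The plan is to build $\overline{Z}$ by classical polarization of the function
\begin{align*}
	P_{f_1,\dots,f_k}(t_1,\dots,t_k):=Z(t_1 f_1+\cdots+t_k f_k),\qquad t_i>0,
\end{align*}
associated with $f_1,\dots,f_k\in\cone$. We will first show that $P_{f_1,\dots,f_k}$ is the restriction to $(0,\infty)^k$ of a homogeneous polynomial of degree $k$ on $\R^k$, and then define $\overline{Z}(f_1,\dots,f_k)$ as $(k!)^{-1}$ times the coefficient of the monomial $t_1\cdots t_k$ in this polynomial.

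For the polynomiality, fix $f_1,\dots,f_k$ and $t_2,\dots,t_k>0$, and set $g:=t_2 f_2+\cdots+t_k f_k\in\cone$. The translated functional $\tilde{Z}(f):=Z(f+g)$ lies in $\PV_d(\cone,V)$: the valuation identity transfers verbatim, continuity follows from Lemma~\ref{lemma:cont_add}, and $\tilde{Z}(f+\ell)=Z(f+\ell+g)$ is polynomial of degree at most $d$ in $\ell\in\mathrm{Aff}(n)$. Applying Proposition~\ref{prop:hom_dec} to $\tilde{Z}$ shows that $t_1\mapsto\tilde{Z}(t_1 f_1)$ is polynomial in $t_1>0$. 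Iterating this observation in each variable in turn---the coefficients at each step being expressed via Vandermonde inversion, as in the proof of Theorem~\ref{theorem:HomDecompPolyValBodies}, as fixed linear combinations of values of $P_{f_1,\dots,f_k}$---yields that $P_{f_1,\dots,f_k}$ is polynomial in each variable separately with uniformly bounded degree, hence jointly polynomial on $(0,\infty)^k$. The $k$-homogeneity of $Z$ enforces $P_{f_1,\dots,f_k}(st_1,\dots,st_k)=s^k P_{f_1,\dots,f_k}(t_1,\dots,t_k)$ for all $s>0$, so the polynomial is homogeneous of total degree exactly $k$.

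Let $c$ denote the coefficient of $t_1\cdots t_k$ in $P_{f_1,\dots,f_k}$. By Vandermonde inversion, $c$ is a fixed linear combination of values $P_{f_1,\dots,f_k}(s^{(j)})$ at finitely many $s^{(j)}\in(0,\infty)^k$, with coefficients independent of $Z$ and the $f_i$. Setting $\overline{Z}(f_1,\dots,f_k):=c/k!$, continuity of $\overline{Z}$ is immediate from continuity of $Z$ and Lemma~\ref{lemma:cont_add}; symmetry is automatic because the monomial $t_1\cdots t_k$ is invariant under simultaneous permutation of the $t_i$ and the $f_i$; and the diagonal identity $\overline{Z}(f,\dots,f)=Z(f)$ follows from $P_{f,\dots,f}(t)=Z((t_1+\cdots+t_k)f)=(t_1+\cdots+t_k)^k Z(f)$, whose coefficient of $t_1\cdots t_k$ equals $k!\,Z(f)$.

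For additivity in each argument, we introduce the auxiliary polynomial $Q(s_1,s_2,t_2,\dots,t_k):=Z(s_1 f+s_2 h+\sum_{i\geq 2}t_i f_i)$, itself a homogeneous polynomial of degree $k$ in $k+1$ variables by the same argument. The identity $P_{tf+h,f_2,\dots,f_k}(t_1,\dots,t_k)=Q(tt_1,t_1,t_2,\dots,t_k)$ allows us to extract the coefficient of $t_1\cdots t_k$ on both sides, expressing $k!\,\overline{Z}(tf+h,f_2,\dots,f_k)$ as $t\,c_{(1,0,1,\dots,1)}+c_{(0,1,1,\dots,1)}$, where $c_\alpha$ denote the coefficients of $Q$. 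The main obstacle of the argument is the identification $c_{(1,0,1,\dots,1)}=k!\,\overline{Z}(f,f_2,\dots,f_k)$ together with its symmetric counterpart: this ``independence of $h$'' of the relevant coefficient is the analogue of the familiar fact, in the theory of mixed volumes, that a mixed coefficient depends only on the functions actually appearing in the corresponding monomial. Its rigorous justification follows the standard polarization argument of \cite[Section 4.2]{Knoerrsupportduallyepi2021} adapted to the $(\A,\Oc)$-cone setting. Uniqueness is then straightforward: if $\overline{Z},\overline{Z}'$ both satisfy the three axioms, then $D:=\overline{Z}-\overline{Z}'$ is symmetric, positively multilinear, and vanishes on the diagonal, so expanding $D(\sum_i t_i f_i,\dots,\sum_i t_i f_i)=0$ yields a polynomial identity in $(t_1,\dots,t_k)$ whose vanishing on $(0,\infty)^k$ forces all mixed values of $D$---and in particular $D(f_1,\dots,f_k)$---to vanish.
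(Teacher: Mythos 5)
Your proposal takes essentially the same route as the paper: the paper proves the corollary by applying Proposition~\ref{prop:hom_dec} together with ``a standard argument'' from \cite[Section 4.2]{Knoerrsupportduallyepi2021}, and your write-up is a faithful unpacking of exactly that standard polarization construction (polynomiality of $t\mapsto Z(t_1f_1+\cdots+t_kf_k)$ from the homogeneous decomposition via Vandermonde inversion, definition of $\overline{Z}$ as the normalized coefficient of $t_1\cdots t_k$, symmetry and the diagonal identity by inspection, uniqueness by coefficient comparison). Since both you and the paper defer the multilinearity step to the cited reference, this is a match in approach, not a deviation.

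One caveat on the deferral you flag: the ``adaptation to the $(\A,\Oc)$-cone setting'' is less cosmetic than your wording suggests. In Knoerr's original $\fconvf$ setting the identification $Q|_{s_2=0}=P_{f,f_2,\dots,f_k}$ follows because $s_2h\to 0$ in epi-convergence as $s_2\to0^+$ for finite $h$; in a general $(\A,\Oc)$-cone one instead has $s_2h\to I_{\overline{\dom h}}$, which need not be the additive identity $I_\A$ nor even lie in $\cone$, so the naive continuity argument for ``independence of $h$'' breaks. The correct adaptation is to first establish multilinearity on the dense set $H_\A(\KN)$ (where every function has domain $\A$, so the limit argument goes through verbatim), and then extend by density using that $\overline{Z}$ is continuous as an explicit fixed linear combination of values of $Z$. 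It is worth spelling this out rather than leaving it as ``follows with adaptation.''
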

The map $\overline{Z}$ is called the \emph{polarization} of $Z\in \PV_{d,k}(\cone, V)$. In the next section we will extend this functional to a distribution, which will require the following estimate. Let $|\cdot|_V$ denote a semi-norm on $V$. For a set $A\subset \cone$ we define for $Z \in \PV_d(\cone,V)$
\begin{align*}
    \Vert Z \rVert_{V;A}\coloneqq \sup_{f\in A}\lvert Z(f)\rvert_V.
\end{align*}
In general, $\lVert\mu\rVert_{V;A}\in[0,\infty]$, however, if the semi-norm is continuous and $A$ is a compact subset of $\cone$, then $\lVert\mu\rVert_{V;A}$ is finite due to the continuity of $\mu$. The following is an adaptation of \cite[Lemma 4.13]{Knoerrsupportduallyepi2021}. Since the proof is analogue, we omit it.
\begin{lemma}\label{lemma:estimatePolarization}
	There exists a constant $C_k>0$ with the following property: If $|\cdot|_V$ is a semi-norm on $V$ and $A\subset \Gamma$ is convex with $I_\A\in A$, then
    \[\lVert \overline{Z}\rVert_{V;A} \coloneqq \sup_{g_1,\dots,g_k \in A} |\overline{Z}(g_1,\dots,g_k)|_V\leq C_k \lVert Z \rVert_{V;A}\] for every $Z \in \PV_{d,k}(\Gamma,V)$.
\end{lemma}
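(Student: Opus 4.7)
The plan is to express $\overline{Z}$ as an explicit linear combination of values of $Z$ via the standard polarization identity, and then use $k$-homogeneity together with the convexity of $A$ to keep every argument inside $A$, at the cost of a combinatorial constant depending only on $k$. The case $k=0$ is trivial (then $\overline{Z}=Z$), so assume $k\geq 1$.

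Fix $g_1,\ldots,g_k\in A$. By Corollary \ref{corollary:existencePolarization} the polarization $\overline{Z}$ is symmetric and additive in each argument, and by Corollary \ref{corollary:VanishingOnIndicators} one has $Z(I_\A)=0$. Since $I_\A$ acts as the additive identity on $\cone$ (as $f+I_\A=f$ for every $f\in\cone$), expanding the equality
\[
Z\!\left(\sum_{i\in I}g_i\right)=\overline{Z}\!\left(\sum_{i\in I}g_i,\ldots,\sum_{i\in I}g_i\right)=\sum_{(i_1,\ldots,i_k)\in I^k}\overline{Z}(g_{i_1},\ldots,g_{i_k})
\]
by multi-additivity and applying the usual inclusion-exclusion cancellation (only tuples with $\{i_1,\ldots,i_k\}=\{1,\ldots,k\}$ survive, and by symmetry they all equal $\overline{Z}(g_1,\ldots,g_k)$) yields the polarization identity
\[
k!\,\overline{Z}(g_1,\ldots,g_k)=\sum_{\emptyset\neq I\subseteq\{1,\ldots,k\}}(-1)^{k-|I|}\,Z\!\left(\sum_{i\in I}g_i\right),
\]
where the $I=\emptyset$ term is absorbed using $Z(I_\A)=0$.

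The sums $\sum_{i\in I}g_i$ lie in $\cone$ but not necessarily in $A$. Here $k$-homogeneity rescues the estimate: rewriting
\[
Z\!\left(\sum_{i\in I}g_i\right)=|I|^k\,Z\!\left(\frac{1}{|I|}\sum_{i\in I}g_i\right),
\]
the convex combination $\tfrac{1}{|I|}\sum_{i\in I}g_i$ belongs to $A$ by convexity, and therefore $\big|Z(\sum_{i\in I}g_i)\big|_V\leq |I|^k\,\lVert Z\rVert_{V;A}\leq k^k\,\lVert Z\rVert_{V;A}$. Summing the triangle inequality over the $2^k-1$ nonempty subsets of $\{1,\ldots,k\}$ gives
\[
\big|\overline{Z}(g_1,\ldots,g_k)\big|_V\leq \frac{(2^k-1)\,k^k}{k!}\,\lVert Z\rVert_{V;A},
\]
so one may take $C_k\coloneqq (2^k-1)k^k/k!$, which is independent of $V$, $|\cdot|_V$, $A$, $\cone$, and $Z$. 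The only genuinely delicate point is the bookkeeping around the $I=\emptyset$ term: since $0\notin\cone$ as soon as $\A\neq\R^n$, the usual ``evaluate the polynomial at $0$'' step of the polarization identity has to be rerouted through $I_\A$ as the additive identity, and the vanishing $Z(I_\A)=0$ from Corollary \ref{corollary:VanishingOnIndicators} is what makes this reroute invisible. This is the main (and essentially only) obstacle in the argument.
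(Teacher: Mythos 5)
Your proof is correct and follows what is essentially the same route the paper intends: the polarization identity coming from multi-additivity, followed by rescaling with $k$-homogeneity so that every argument of $Z$ becomes a convex combination lying in $A$, and then the triangle inequality over the $2^k-1$ nonempty subsets. This is precisely the argument behind the cited Lemma 4.13 of \cite{Knoerrsupportduallyepi2021}, adapted to the present cone setting.

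One small remark: the ``main obstacle'' you flag around the $I=\emptyset$ term is not actually an obstacle. Expanding $Z\bigl(\sum_{i\in I}g_i\bigr)=\sum_{(i_1,\ldots,i_k)\in I^k}\overline{Z}(g_{i_1},\ldots,g_{i_k})$ by multi-additivity and running inclusion--exclusion only ever involves nonempty $I\supseteq\{i_1,\ldots,i_k\}$, since every tuple determines a nonempty index set; the identity
\[
k!\,\overline{Z}(g_1,\ldots,g_k)=\sum_{\emptyset\neq I\subseteq\{1,\ldots,k\}}(-1)^{k-|I|}\,Z\!\left(\sum_{i\in I}g_i\right)
\]
therefore holds without ever invoking an empty sum, and hence without $Z(I_\A)=0$ or the hypothesis $I_\A\in A$. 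Your proof in fact establishes the estimate without using that hypothesis at all, which is consistent but slightly stronger than what the lemma asserts; the hypothesis is there because it is automatically satisfied in the application (Corollary \ref{corollary:EstimateGW}), where $A=H_\A(\{K\in\KN:K\subset B_2(0)\})$ contains $H_\A(\{0\})=I_\A$.
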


\section{Goodey-Weil distributions}\label{sec:GW}
    From this section on, $V$ will denote a locally convex vector space over the real numbers. We will first lift the polarization $\overline{Z}$ of $Z\in\PV_{d,k}(\cone,V)$ to a continuous multilinear functional on $C^\infty_c(\R^n)$. We require the following result.
	\begin{lemma}
		\label{lemma:HelpLemmaGW}
		Let $\phi\in C^\infty_c(\R^n)$ with $\supp\phi\subset B_R(0)$. Then $\|\phi\|_{C^2(\R^n)}(1+R^2)^{3/2}h_{B_1(0)}(\cdot,-1)+\phi$ is convex and belongs to $\fconvl$.
	\end{lemma}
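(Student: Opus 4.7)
The plan is to exploit the explicit form of $h_{B_1(0)}(\cdot,-1)$ and a strict-convexity/Hessian comparison on the support of $\phi$, then finish with Lemma \ref{lemma:localVariationsConvCD}. As a first step, I would compute $h_{B_1(0)}(x,-1)$, where $B_1(0)\subset\R^{n+1}$ is the unit Euclidean ball. The supremum defining the support function is attained at $(y,t)=(x,-1)/\sqrt{|x|^2+1}$, giving
\begin{align*}
h_{B_1(0)}(x,-1)=\sqrt{|x|^2+1}.
\end{align*}

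Next, I would estimate the smallest eigenvalue of its Hessian on the ball $B_R(0)$. A direct calculation yields
\begin{align*}
\Hess\sqrt{|x|^2+1}=\frac{I}{\sqrt{|x|^2+1}}-\frac{x\otimes x}{(|x|^2+1)^{3/2}},
\end{align*}
whose eigenvalues are $\tfrac{1}{\sqrt{|x|^2+1}}$ on $x^\perp$ and $\tfrac{1}{(|x|^2+1)^{3/2}}$ along $x$. In particular, on $B_R(0)$ the smallest eigenvalue is bounded below by $(1+R^2)^{-3/2}$. Setting $C\coloneqq\|\phi\|_{C^2(\R^n)}(1+R^2)^{3/2}$, the Hessian of $C\,h_{B_1(0)}(\cdot,-1)$ therefore dominates $\|\phi\|_{C^2(\R^n)}\,I$ on $B_R(0)\supset\supp\phi$. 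Since $\|\Hess\phi(x)\|_{\mathrm{op}}\le \|\phi\|_{C^2(\R^n)}$, the Hessian of $C\,h_{B_1(0)}(\cdot,-1)+\phi$ is positive semi-definite on $B_R(0)$. Outside $B_R(0)$ the function $\phi$ vanishes, so the sum equals $C\,h_{B_1(0)}(\cdot,-1)$, which is convex. As the full function is $C^2$ (in fact smooth) on $\R^n$ with nonnegative Hessian everywhere, it is convex on $\R^n$.

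Finally, to conclude that this convex function belongs to $\fconvl$, I would appeal to Lemma \ref{lemma:localVariationsConvCD}: taking $K=C\,B_1(0)\in\KN$ we have $h_K(\cdot,-1)=C\,h_{B_1(0)}(\cdot,-1)$, while $\phi$ is bounded because $\phi\in C_c^\infty(\R^n)$. Since the sum is convex and finite-valued, hence in $\fconvf$, Lemma \ref{lemma:localVariationsConvCD} immediately yields $C\,h_{B_1(0)}(\cdot,-1)+\phi\in\fconvl$. The only nontrivial point in the whole argument is the eigenvalue bound on $\Hess\sqrt{|x|^2+1}$ and checking that the chosen constant $C$ exactly compensates it on $B_R(0)$; once that is in place, the remainder is bookkeeping.
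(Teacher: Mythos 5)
Your proposal is correct and follows essentially the same route as the paper's proof: the paper simply asserts that the Hessian of $h_{B_1(0)}(x,-1)=\sqrt{1+|x|^2}$ is positive definite, implicitly using that the scaling factor $\|\phi\|_{C^2(\R^n)}(1+R^2)^{3/2}$ compensates the degeneration of the smallest eigenvalue on $B_R(0)$, and then invokes Lemma \ref{lemma:localVariationsConvCD}. You merely spell out the eigenvalue computation $(1+|x|^2)^{-3/2}$ that the paper leaves as ``easy to see,'' so there is no substantive difference.
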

    \begin{proof}
        Since $h_{B_1(0)}(x,-1)=\sqrt{1+|x|^2}$, it is easy to see that the Hessian of this function is positive definite. Thus $\|\phi\|_{C^2(\R^n)}(1+R^2)^{3/2}h_{B_1(0)}(\cdot,-1)+\phi\in\fconvf$, and therefore belongs to $\fconvl$ by Lemma \ref{lemma:localVariationsConvCD}.
    \end{proof}
	Given $Z\in\PV_{d,k}(\cone,V)$, we define $\tilde{Z}:(C_c^\infty(\R^n))^k\rightarrow V$ as follows: For $\phi_1,\dots,\phi_k\in C_c^\infty(\R^n)$ we choose $f_i\in \cone$ such that $f_i+\phi\in \cone$, for example by adding the indicator $I_\A$ to one of the functions given by Lemma \ref{lemma:HelpLemmaGW}. We then define
	\begin{align*}
		\tilde{Z}(\phi_1,\dots,\phi_k)\coloneqq\sum\limits_{l=0}^k(-1)^{k-l}\frac{1}{(k-l)!l!}\sum_{\sigma\in S_k}\overline{Z}(f_{\sigma(1)}+\phi_{\sigma(1)},\dots,f_{\sigma(l)}+\phi_{\sigma(l)},f_{\sigma(l+1)},\dots,f_{\sigma(k)}).
	\end{align*}
    Since $\overline{Z}$ is additive in each argument, it is easy to see that $\tilde{Z}(\phi_1,\dots,\phi_k)$ does not depend on the choice of the functions $f_i$ and is linear in each argument.
	\begin{corollary}
        \label{corollary:EstimateGW}
		Let $\cone$ be an $(\A,\Oc)$-cone. There exists a constant $C_k>0$ and a compact set $A\subset \cone$ with the following property: If $|\cdot|_V$ is a continuous semi-norm on $V$ and $\phi_1,\dots,\phi_k\in C^\infty_c(\R^n)$ are supported on $B_R(0)$, then 
		\begin{align*}
			|\tilde{Z}(\phi_1,\dots,\phi_k)|_{V}\le C_k(1+R^2)^{3k/2} \prod_{i=1}^{k}\|\phi_i\|_{C^2(\R^n)} \lVert Z \rVert_{V,A}.
		\end{align*}
  In particular, $\tilde{Z}:C^\infty_c(\R^n)^k\rightarrow V$ is continuous.
	\end{corollary}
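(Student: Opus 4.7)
The plan is to exploit the multilinearity of the polarization $\overline{Z}$ (Corollary \ref{corollary:existencePolarization}) to extract the scaling factor, then apply Lemma \ref{lemma:estimatePolarization} on a fixed compact convex subset of $\cone$ that does not depend on $\phi_i$ or $R$.

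First, I would set $c_i := \|\phi_i\|_{C^2(\R^n)}(1+R^2)^{3/2}$, let $B := B_1(0)\subset\R^{n+1}$ denote the Euclidean unit ball, and put $g_0 := h_B(\cdot,-1)+I_\A \in \cone$. By the $(\A,\Oc)$-cone axiom applied to the body $c_i B$, the function $f_i := c_i g_0$ lies in $\cone$; Lemma \ref{lemma:HelpLemmaGW} combined with Lemma \ref{lemma:localVariationsConvCD} ensures that $f_i+\phi_i = c_i h_B(\cdot,-1)+\phi_i+I_\A$ also belongs to $\cone$. Using the symmetry of $\overline{Z}$, the defining sum for $\tilde Z$ collapses to
\[
\tilde Z(\phi_1,\dots,\phi_k) = \sum_{S\subseteq\{1,\dots,k\}}(-1)^{k-|S|}\overline{Z}\bigl(\{f_i+\phi_i\}_{i\in S}\cup\{f_i\}_{i\notin S}\bigr).
\]
Introducing $\tilde g_i := g_0+\phi_i/c_i \in \cone$, one sees that $f_i = c_i g_0$ and $f_i+\phi_i = c_i \tilde g_i$, so multilinearity of $\overline{Z}$ yields
\[
\tilde Z(\phi_1,\dots,\phi_k) = \Bigl(\prod_{i=1}^k c_i\Bigr)\sum_{S\subseteq\{1,\dots,k\}}(-1)^{k-|S|}\overline{Z}\bigl(\{\tilde g_i\}_{i\in S}\cup\{g_0\}_{i\notin S}\bigr),
\]
and $\prod_i c_i = (1+R^2)^{3k/2}\prod_i\|\phi_i\|_{C^2(\R^n)}$ already accounts for the scaling factor in the estimate.

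The main task is now to produce a compact convex set $A\subseteq\cone$ containing $I_\A$, $g_0$, and every $\tilde g_i$ uniformly in $\phi_i$ and $R$. Using the construction in the proof of Lemma \ref{lemma:localVariationsConvCD}, I would represent $\tilde g_i = H_\A(K_i)$ with $K_i := \epi\bigl((h_B(\cdot,-1)+\phi_i/c_i)^*\bigr)\cap\{t\le T\}\in\KN$. Because $\phi_i/c_i$ is compactly supported, the function $\sqrt{1+|y|^2}+\phi_i(y)/c_i$ retains the linear growth of $h_B(\cdot,-1)$ at infinity, so the domain of its Fenchel conjugate equals $\overline{B_1(0)}\subset\R^n$ independently of $\phi_i$; the bound $\|\phi_i/c_i\|_{C^0}\le 1$ further yields uniform control on both $T$ and the infimum of the conjugate. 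Consequently $K_i\subseteq B_M(0)\subset\R^{n+1}$ for some $M$ depending only on $n$. Setting $\mathcal K_0 := \{K\in\KN : K\subseteq B_M(0)\}$, which is compact by the Blaschke selection theorem and closed under Minkowski convex combinations, the set $A := H_\A(\mathcal K_0)$ is a compact subset of $\cone$ by Lemma \ref{lemma:HAContinuous} and is convex in the function-space sense, because $H_\A$ intertwines Minkowski convex combinations with pointwise ones. By construction $I_\A = H_\A(\{0\})$, $g_0 = H_\A(B)$, and each $\tilde g_i = H_\A(K_i)$ lie in $A$.

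Lemma \ref{lemma:estimatePolarization} now bounds each of the $2^k$ summands above by $C_k\|Z\|_{V;A}$, giving the desired estimate with constant $2^k C_k$. Continuity of $\tilde Z:(C_c^\infty(\R^n))^k\rightarrow V$ then follows at once from this bound together with multilinearity, since any convergent sequence in $C_c^\infty(\R^n)$ is eventually supported in a common compact set and converges in the $C^2$-norm. The principal technical obstacle is the uniform containment $K_i \subseteq B_M(0)$, which reduces to controlling the domain and oscillation of the conjugate of $\sqrt{1+|\cdot|^2}+\phi_i/c_i$ via the compact support of $\phi_i$ and the $C^0$-bound on $\phi_i/c_i$.
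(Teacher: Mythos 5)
Your argument is correct and takes essentially the same route as the paper: you use the multilinearity of $\overline{Z}$ to pull out the factor $\prod_i c_i = (1+R^2)^{3k/2}\prod_i\|\phi_i\|_{C^2}$, observe that the normalized functions $g_0$ and $\tilde g_i$ (the paper's $h_i$ and $f_i$) land in a fixed compact convex subset $A=H_\A(\{K\in\KN : K\subseteq B_M(0)\})$ of $\cone$ containing $I_\A$, and then invoke Lemma~\ref{lemma:estimatePolarization}. In fact, your careful verification that the bodies $K_i$ are uniformly contained in a fixed ball (via the recession-function argument and the $C^0$-bound on $\phi_i/c_i$) is a welcome elaboration of a step the paper dismisses as ``easy to see.''
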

	\begin{proof}
		Set $A\coloneqq H_\A(\{K\in\KN: K\subset B_{2}(0)\})$. Then $A$ is compact as the image of a compact set under the continuous map $H_\A$. Fix $R>0$. For $\phi_1,\dots,\phi_k\in C_c^\infty(\R^n)$ consider the functions
		\begin{align*}
			f_i\coloneqq h_{B_1(0)}(\cdot,-1)+\frac{\phi_i}{\|\phi_i\|_{C^2(\R^n)}(1+R^2)^{3/2}}+I_\A
		\end{align*}
		Using Lemma \ref{lemma:localVariationsConvCD}, it is easy to see that $f_i\in A$. Set $h_i\coloneqq H_\A(B_1(0))\in A$. Then
		\begin{align*}
			|\tilde{Z}(\phi_1,\dots,\phi_k)|_V\le& \sum\limits_{l=0}^k\frac{1}{(k-l)!l!}\sum_{\sigma\in S_k}|\overline{Z}(f_{\sigma(1)},\dots,f_{\sigma(l)},h_{\sigma(l+1)},\dots,h_{\sigma(k)})|_V\prod_{i=1}^k(1+R^2)^{3/2}\|\phi_i\|_{C^2(\R^n)}\\
			\le &2^k\sup_{g_1,\dots,g_k\in A} |\overline{Z}(g_1,\dots,g_k)|_V (1+R^2)^{3k/2}  \prod_{i=1}^k\|\phi_i\|_{C^2(\R^n)}.
		\end{align*}
		Since $\Gamma$ is an $(\A,\Oc)$-cone and $A$ is compact and convex with $I_\A\in A$, Lemma \ref{lemma:estimatePolarization} implies that there exists a constant $C_k$ such that \[ \sup_{g_1,\dots,g_k\in A} |\overline{Z}(g_1,\dots,g_k)| \leq C_k \lVert Z \rVert_{V;A},\] which is finite as $Z$ is continuous. This concludes the proof.
	\end{proof}
    Let us denote the completion of $V$ by $\overline{V}$. Let $\mathcal{D}(\R^n,\overline{V})$ denote the space of $\overline{V}$-valued distributions on $\R^n$, that is the space of all continuous linear maps $T:C^\infty_c(\R^n)\rightarrow \overline{V}$.
	\begin{theorem}
		For every $Z\in \PV_{d,k}(\cone,V)$ there exists a unique distribution $\GW(Z)\in \mathcal{D}((\R^n)^k,\overline{V})$ such that
		\begin{align}
			\GW(Z)[\phi_1\otimes\dots\otimes\phi_k]=\tilde{Z}(\phi_1,\dots,\phi_k)
		\end{align}\label{eq:char_GW}
		for all $\phi_1,\dots,\phi_k\in C^\infty_c(\R^n)$.
	\end{theorem}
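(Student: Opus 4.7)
Uniqueness is immediate. If two continuous linear functionals on $C_c^\infty((\R^n)^k)$ agree on all pure tensors $\phi_1\otimes\cdots\otimes\phi_k$, they agree on their linear span, which is dense in $C_c^\infty((\R^n)^k)$: for any fixed $R>0$, the algebraic tensor product $C_c^\infty(B_R(0))^{\otimes k}$ is sequentially dense in the Fr\'echet space $C_c^\infty(B_R(0)^k)$ via a standard mollification/nuclearity argument, and distributions agreeing on a dense subspace coincide.

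For existence, I would first observe that $\tilde Z$ is multilinear on $(C_c^\infty(\R^n))^k$, which follows directly from its defining alternating-sum formula together with the multilinearity of $\overline{Z}$ from Corollary \ref{corollary:existencePolarization}. By the universal property of the algebraic tensor product, this yields a unique linear map
\begin{align*}
T_0\colon C_c^\infty(\R^n)^{\otimes k}\rightarrow V,\qquad \phi_1\otimes\cdots\otimes\phi_k\mapsto \tilde Z(\phi_1,\dots,\phi_k).
\end{align*}
The main task is then to extend $T_0$ continuously to all of $C_c^\infty((\R^n)^k)$ with values in the completion $\overline{V}$. Given $\Phi\in C_c^\infty(B_R(0)^k)$, approximate it by a sequence $\Phi_j\in C_c^\infty(B_R(0))^{\otimes k}$ converging to $\Phi$ in the $C^\infty$ Fr\'echet topology. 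One then shows that $(T_0(\Phi_j))_j$ is Cauchy with respect to every continuous semi-norm $|\cdot|_V$ on $V$, so that, by completeness of $\overline{V}$, the limit $\GW(Z)[\Phi]\coloneqq \lim_j T_0(\Phi_j)\in\overline{V}$ exists and is independent of the approximating sequence. Linearity and continuity of the resulting functional then follow routinely from the analogous Cauchy estimates.

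The principal obstacle is producing such a Cauchy estimate: Corollary \ref{corollary:EstimateGW} controls $|\tilde Z(\phi_1,\dots,\phi_k)|_V$ only by the product $\prod_i\|\phi_i\|_{C^2}$, which is not an intrinsic quantity on an arbitrary element of the tensor product. The clean way to bridge this gap is via nuclearity: the joint continuity estimate of Corollary \ref{corollary:EstimateGW} says that $\tilde Z$ extends by the universal property of the projective tensor product to a continuous linear map on $C_c^\infty(B_R(0))^{\hat\otimes_\pi k}$; because $C_c^\infty(B_R(0))$ is a nuclear Fr\'echet space, this completed projective tensor product is canonically isomorphic to $C_c^\infty(B_R(0)^k)$ (a vector-valued version of Schwartz' kernel theorem), and the completeness of $\overline{V}$ is precisely what allows the extension to land in a single locally convex space. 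A more elementary alternative is to use a Leibniz-type identity for derivatives of tensor products to bound $\prod_i\|\phi_i\|_{C^2}$ by $\|\Psi\|_{C^{2k}}$ on finite-rank approximations $\Psi=\sum_\ell \phi_1^{(\ell)}\otimes\cdots\otimes\phi_k^{(\ell)}$ produced by partition-of-unity decompositions on $B_R(0)^k$. Finally, a standard exhaustion over $R\to\infty$ glues the locally defined functionals into a single element $\GW(Z)\in\mathcal{D}((\R^n)^k,\overline{V})$ that agrees with $\tilde Z$ on pure tensors.
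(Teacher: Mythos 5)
Your proof takes essentially the same route as the paper, which simply notes that Corollary~\ref{corollary:EstimateGW} gives continuity and multilinearity of $\tilde Z$ on $(C_c^\infty(\R^n))^k$ and then invokes the (vector-valued) Schwartz Kernel Theorem. The nuclearity argument you spell out — $C_c^\infty(B_R(0))^{\hat\otimes_\pi k}\cong C_c^\infty(B_R(0)^k)$ for the nuclear Fr\'echet space of smooth functions supported in the (closed) ball, together with completeness of $\overline{V}$ — is exactly the content of that theorem, and your separate density-of-pure-tensors argument for uniqueness is the standard companion fact, so the two proofs coincide in substance. Your ``elementary alternative'' sketch is the only place that is weaker than what you claim: the inequality $\sum_\ell\prod_i\|\phi_i^{(\ell)}\|_{C^2}\lesssim\|\Psi\|_{C^{2k}}$ does not hold for an arbitrary rank decomposition of a finite-rank tensor, so that route needs a concrete choice of approximating decomposition (e.g.\ truncated Fourier or Hermite expansions, where the factorization and summability of coefficients can be controlled) rather than a generic partition of unity; as written it should be read as a heuristic, not a proof. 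The main line of your argument, however, is correct and matches the paper's.
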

	\begin{proof}
		Corollary \ref{corollary:EstimateGW} implies that $\tilde{Z}:C^\infty_c(\R^n)^k\rightarrow V$ is continuous and multilinear. The claim thus follows from the Schwartz Kernel Theorem (see, for example, \cite{Gask60}).
	\end{proof}		
    The construction immediately implies the following restriction on the support of these distributions.
    \begin{corollary}
        \label{corollary:ResSupportPreliminary}
        Let $V$ be a locally convex vector space (that does not necessarily admit a continuous norm) and $\cone$ be an $(\A,\Oc)$-cone. For every $Z\in \PV_{d,k}(\cone,V)$ the support of $\GW(Z)$ is contained in the closed convex set $\Oc^k$.
    \end{corollary}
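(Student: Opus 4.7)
The plan is to verify that $\GW(Z)$ annihilates every test function with support contained in the open set $W := (\R^n)^k \setminus \Oc^k$. By the standard characterization of the support of a distribution, this will be achieved if, for each $(x_1,\dots,x_k) \in W$, I can produce an open product neighborhood $V_1\times\cdots\times V_k$ on which $\GW(Z)$ vanishes. Since some $x_i \notin \Oc$ and $\Oc$ is closed, I take $V_i$ to be an open neighborhood of $x_i$ with $V_i\cap \Oc = \emptyset$, and $V_j$ an arbitrary open neighborhood of $x_j$ for $j\neq i$. Because tensor products $\phi_1\otimes\cdots\otimes\phi_k$ with $\phi_j\in C_c^\infty(V_j)$ are total in $C_c^\infty(V_1\times\cdots\times V_k)$ and $\GW(Z)$ is continuous, it suffices to show that $\tilde{Z}(\phi_1,\dots,\phi_k)=0$ whenever some $\phi_i$ satisfies $\supp\phi_i\cap\Oc=\emptyset$.

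The crucial observation is that, for such a $\phi_i$, the equality
\[ I_\Oc + \phi_i = I_\Oc \]
holds as extended real-valued functions on $\R^n$: on $\Oc$ both sides equal $0$ because $\phi_i$ vanishes there, while on $\R^n\setminus \Oc$ both sides equal $+\infty$. This lets me exploit the freedom in the definition of $\tilde{Z}$ by setting $f_i := I_\Oc$, which lies in $\cone$ by the $(\A,\Oc)$-cone axioms and satisfies $f_i + \phi_i = I_\Oc \in \cone$. For $j\neq i$, I pick any $f_j\in\cone$ with $f_j+\phi_j\in\cone$ using Lemma \ref{lemma:HelpLemmaGW} together with $I_\A\in\cone$.

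Using the symmetry of the polarization $\overline{Z}$ (Corollary \ref{corollary:existencePolarization}) to group the permutations that produce the same $l$-subset, the formula defining $\tilde{Z}$ reorganizes into the inclusion-exclusion form
\[ \tilde{Z}(\phi_1,\dots,\phi_k) = \sum_{I\subseteq\{1,\dots,k\}} (-1)^{k-|I|}\,\overline{Z}\bigl((f_j+\phi_j)_{j\in I},\, (f_j)_{j\notin I}\bigr). \]
Because $f_i+\phi_i=f_i=I_\Oc$, the slot with index $i$ is always occupied by $I_\Oc$, irrespective of whether $i\in I$. Pairing each $I$ containing $i$ with $I\setminus\{i\}$ produces two terms whose $\overline{Z}$-arguments are, as multisets, identical, but whose signs differ by a factor of $-1$. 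All terms cancel pairwise, yielding $\tilde{Z}(\phi_1,\dots,\phi_k)=0$.

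The only non-bookkeeping step is the passage from the symmetrized formula for $\tilde{Z}$ to the inclusion-exclusion sum and the recognition of the resulting pairwise cancellation; everything else is the standard reduction of a support statement to tensor products plus the key algebraic identity $I_\Oc+\phi_i=I_\Oc$. It is precisely here that the $(\A,\Oc)$-cone hypothesis enters essentially, since the argument relies on having $I_\Oc$ available as a legitimate element of $\cone$.
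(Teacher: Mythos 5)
Your proof is correct and rests on the same key observation as the paper's: since $\supp\phi_i\cap\Oc=\emptyset$, the indicator $I_\Oc$ absorbs $\phi_i$, i.e.\ $I_\Oc+\phi_i=I_\Oc$, and $I_\Oc$ is available in $\cone$ by the $(\A,\Oc)$-cone axioms. The only cosmetic difference is at the final step: the paper notes that with $f_j$ replaced by $f_j+I_\Oc$ the defining sum for $\tilde Z(\phi_1,\dots,\phi_k)$ is term-by-term identical to the one for $\tilde Z(0,\phi_2,\dots,\phi_k)$, and then invokes multilinearity of $\tilde Z$ to conclude it is zero, whereas you unfold the symmetrized sum into the inclusion--exclusion form over subsets $I\subseteq\{1,\dots,k\}$ and cancel the terms $I$ and $I\setminus\{i\}$ pairwise. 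Both arguments are valid; the paper's version is slightly shorter but yours makes the cancellation mechanism explicit.
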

    \begin{proof}
        It is sufficient to show that $\GW(Z)[\phi_1\otimes\dots\otimes \phi_k]=0$ if $\supp \phi_1\subset \R^n\setminus\Oc$. Choose $f_j\in \cone$ such that $f_j+\phi_j$ is convex. Then, the same applies to the function $f_j+I_\Oc\in\cone$. Moreover, $f_1+I_\Oc+\phi_1=f_1+I_\Oc=f_1+I_\Oc+0$. Hence, by the definition of $\tilde{Z}$,
        \begin{align*}
            &\GW(Z)[\phi_1\otimes\dots\otimes \phi_k]=\tilde{Z}(\phi_1,\dots,\phi_k)=\tilde{Z}(0,\phi_2,\dots,\phi_k)=0.
        \end{align*}
    \end{proof}
  
	\subsection{Properties of the support}

    In \cite[Theorem 5.5]{Knoerrsupportduallyepi2021} it is proved that the support of the Goodey-Weil distribution of $Z \in \PV_{0,k}(\fconvf)$ is contained in the diagonal in $(\R^n)^k$. The same proof applies in our more general setting, with a minor modification: The function $1+|x|^2$, which is in general not contained in $\cone$, has to be replaced by $\sqrt{1+|x|^2}+I_\A(x)$. Since the rest of the proof is identical, we leave the modification to the reader and only state the result.
    \begin{theorem}\label{theorem_support_GW_diagonal}
    Let $\cone$ be an $(\A,\Oc)$-cone. For $Z \in \PV_{d,k}(\cone,V), k \geq 2$, the support of $\GW(Z)$ is contained in the diagonal in $(\R^n)^k$.
    \end{theorem}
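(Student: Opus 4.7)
My plan is to show that any point $(x_1, \dots, x_k) \in (\R^n)^k$ off the diagonal is not contained in $\supp \GW(Z)$. After relabeling we may assume $x_1 \neq x_2$, strictly separate them by an affine hyperplane, and fix open neighborhoods $U_1, U_2$ of $x_1, x_2$ contained in the two opposite open half-spaces. Since $\GW(Z)$ is determined on tensor products via $\tilde Z$, it suffices to show that $\tilde Z(\phi_1, \dots, \phi_k) = 0$ whenever $\phi_1 \in C_c^\infty(U_1)$, $\phi_2 \in C_c^\infty(U_2)$, and $\phi_3, \dots, \phi_k \in C_c^\infty(\R^n)$. Writing each $\phi_i$ for $i \in \{1,2\}$ as a difference $\phi_i = (\phi_i + M_i\rho_i) - M_i\rho_i$, with $\rho_i \in C_c^\infty(U_i)$ a non-negative smooth cutoff equal to $1$ on $\supp\phi_i$ and $M_i \geq \lVert \phi_i \rVert_\infty$, the multilinearity of $\tilde Z$ in each argument reduces the problem to the case $\phi_1, \phi_2 \geq 0$.

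The key step is a valuation identity coming from the hyperplane separation. Using Lemma \ref{lemma:HelpLemmaGW} and Lemma \ref{lemma:localVariationsConvCD}, I choose $\lambda > 0$ large enough that $f_0 := \lambda\sqrt{1+|x|^2}+I_\A$ belongs to $\cone$ and the family $f_0 + \sum_{i=1}^k s_i\phi_i$ is convex and lies in $\cone$ for every $(s_1, \dots, s_k) \in [0, 1]^k$. Fix $s_3, \dots, s_k \in [0,1]$, set $g := f_0 + \sum_{i=3}^k s_i\phi_i$, and for $s_1, s_2 \in [0,1]$ put $u := g + s_1\phi_1$, $v := g + s_2\phi_2$. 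The disjoint supports of $\phi_1$ and $\phi_2$ together with their non-negativity yield the pointwise identities
\[
u \vee v = f_0 + \sum_{i=1}^k s_i\phi_i, \qquad u \wedge v = g,
\]
and both functions on the right lie in $\cone$ by the choice of $\lambda$. The valuation property of $Z$ then delivers
\[
Z\bigl(f_0 + \textstyle\sum_i s_i\phi_i\bigr) + Z(g) = Z(g + s_1\phi_1) + Z(g + s_2\phi_2).
\]

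To conclude, define $G(s_1, \dots, s_k) := Z(f_0 + \sum_i s_i\phi_i)$. Using the polarization $\overline Z$ from Corollary \ref{corollary:existencePolarization} together with the multi-additive extension built into the construction of $\tilde Z$, $G$ is a polynomial of degree at most $k$ in $s$, and a standard multinomial expansion identifies the coefficient of the monomial $s_1 s_2 \cdots s_k$ with $k!\,\tilde Z(\phi_1, \dots, \phi_k)$. The valuation identity above amounts precisely to $\partial_{s_1}\partial_{s_2} G \equiv 0$ on $[0,1]^k$, which forces every monomial containing both $s_1$ and $s_2$ to have vanishing coefficient. In particular, $\tilde Z(\phi_1, \dots, \phi_k) = 0$, concluding the proof.

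The main obstacle is ensuring that $u \vee v$ and $u \wedge v$ lie in $\cone$, since a general $(\A, \Oc)$-cone is not closed under pointwise maxima or minima. The explicit formulas above bypass this by exhibiting both wedge and meet as members of the family $f_0 + \sum_i s_i\phi_i$ itself, which lies in $\cone$ by construction; this is exactly the step where the reference function used in \cite{Knoerrsupportduallyepi2021} for $\fconvf$ must be replaced by $\sqrt{1+|x|^2}+I_\A$ in order to ensure membership in our smaller $\cone$. The reduction to non-negative $\phi_1, \phi_2$ is likewise essential, since otherwise the wedge and meet would develop non-smooth kinks and could fail to be convex.
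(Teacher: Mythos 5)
Your proof is correct and reconstructs the argument of \cite[Theorem 5.5]{Knoerrsupportduallyepi2021}, which the paper invokes without reproducing, incorporating precisely the modification the authors flag (using $\lambda\sqrt{1+|x|^2}+I_\A$ as the base function so that the perturbed family $f_0+\sum_i s_i\phi_i$ remains inside the $(\A,\Oc)$-cone via Lemma \ref{lemma:localVariationsConvCD}). The reduction to separated non-negative bumps, the max/min valuation identity exploiting disjoint supports, and the extraction of the coefficient of $s_1\cdots s_k$ from the polynomial $G$ all match the intended structure of the cited proof.
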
\noindent
    It will be convenient to consider the support of this distribution as a subset of $\R^n$. Abusing notation, we will identify the support of $\GW(Z)$ for $Z\in\PV_{d,k}(\R^n,V)$ with the set $\Delta_k^{-1}(\GW(Z))\subset \R^n$, where $\Delta_k:\R^n\rightarrow(\R^n)^k$ denotes the diagonal embedding. In this interpretation, Corollary \ref{corollary:ResSupportPreliminary} shows that the support of $\GW(Z)$ is contained in $\Oc$.\\

    Next, we prove the first part of Theorem \ref{maintheorem:GW}, which extends the first part of \cite[Theorem 5.7]{Knoerrsupportduallyepi2021}. We remark that, although similar in spirit, this generalization significantly strengthens the original argument since it does not rely on the existence of convex functions in $\cone$ that have diverging lower bounds on their Hessians. 
	\begin{theorem}\label{thm:comp_supp}
		Assume that $V$ admits a continuous norm and let $\cone$ be an $(\A,\Oc)$-cone. For every $Z\in \PV_{d,k}(\cone,V)$ the support of $\GW(Z)$ is compact and contained in the interior of $\Oc$.
	\end{theorem}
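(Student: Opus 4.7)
By Corollary \ref{corollary:ResSupportPreliminary} and Theorem \ref{theorem_support_GW_diagonal}, we already know $\supp \GW(Z) \subset \Delta_k(\Oc)$. Identifying the diagonal with $\R^n$, let $S\coloneqq \Delta_k^{-1}(\supp \GW(Z))\subset \Oc$. Since $S$ is closed by construction, the claim reduces to showing that $S$ is bounded and that $S\cap \partial \Oc=\emptyset$.

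My plan is to argue both statements by contradiction in parallel. Suppose $S$ is either unbounded or meets $\partial\Oc$; then we can choose a sequence $(x_j)_j\subset S$ that either escapes to infinity in $\Oc$ or converges to a point of $\partial \Oc$. By the characterization of support, for each $j$ we can find $\phi_j\in C_c^{\infty}(\R^n)$ supported in a small neighborhood $U_j$ of $x_j$ with $\tilde Z(\phi_j,\dots,\phi_j)\neq 0$. After passing to a subsequence, the $U_j$ become pairwise disjoint and, since the whole distribution $\GW(Z)$ is concentrated on the diagonal (Theorem \ref{theorem_support_GW_diagonal}), the finite sums $\Phi_N\coloneqq \sum_{j=1}^{N}\phi_j$ satisfy the telescoping identity
\[
\tilde Z(\Phi_N,\dots,\Phi_N)=\sum_{j=1}^{N}\tilde Z(\phi_j,\dots,\phi_j),
\]
because all mixed terms $\tilde Z(\phi_{j_1},\dots,\phi_{j_k})$ with not all indices equal vanish by disjointness of supports on the diagonal. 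After an appropriate rescaling and, if necessary, a sign correction obtained by replacing $\phi_j$ by $\pm \lambda_j\phi_j$, the right-hand side can be arranged to have $|\cdot|_V$-norm growing at least linearly in $N$ for a fixed continuous norm $|\cdot|_V$ on $V$.

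The strategy is then to derive a contradiction by producing a uniform (in $N$) bound on $|\tilde Z(\Phi_N,\dots,\Phi_N)|_V$. The polarization identity underlying the definition of $\tilde Z$ expresses this quantity through the values of $\overline Z$ on elements of a compact set $A\subset \cone$ that arises by applying Lemma \ref{lemma:HelpLemmaGW} to $\Phi_N$ and then adding $I_\A$; Lemma \ref{lemma:estimatePolarization} then converts the continuity of $Z$ together with the continuous norm on $V$ into the bound $\lVert \overline Z\rVert_{V;A}\leq C_k\lVert Z\rVert_{V;A}<\infty$, giving the desired uniform estimate provided the set $A$ can be chosen independently of $N$.

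The main obstacle is precisely this last point: the construction of $A$ in Lemma \ref{lemma:HelpLemmaGW} uses the global support function $\sqrt{1+|x|^2}$ rescaled by a factor involving the diameter of $\supp\Phi_N$, which blows up as $x_j\to \infty$ or $x_j\to \partial \Oc$; the naive choice therefore does not lie in a common compact subset of $\cone$. To overcome this I would exploit the full $(\A,\Oc)$-cone structure to replace the single global function by a sum of sharply localized strongly convex support functions $h_{K_j}(\cdot,-1)$ with $K_j\in \K^{n+1}$ chosen so that $h_{K_j}(\cdot,-1)$ is strongly convex only on a neighborhood of $U_j$, with global $C^0$-size controlled by the geometry of $U_j$ and not by the position of $x_j$; this is where the continuous norm assumption becomes indispensable, as it is what converts the resulting bound on polarizations into the uniform control forcing $\tilde Z(\phi_j,\dots,\phi_j)\to 0$ and thus yielding the contradiction.
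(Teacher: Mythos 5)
Your plan takes a genuinely different route from the paper, and the obstacle you identified at the end is not a technicality but a real obstruction; there is also a second, independent gap. First, regarding the compact set $A$: precompact subsets of $\fconvl$ consist of families of globally Lipschitz convex functions with uniformly bounded Lipschitz constants (and locally uniformly bounded values). Such a family cannot contain, for each $j$, a function $g_j$ whose Hessian dominates $\lVert\phi_j\rVert_{C^2}\cdot I$ on $U_{\epsilon_j}(x_j)$ when $x_j\to\infty$ with $\epsilon_j$ bounded below (take $\Oc=\R^n$): restricting to a line close to the asymptotic direction of $(x_j)_j$, the derivative of a convex function must increase by a fixed positive amount across each ball, and a subsequence eventually violates any common Lipschitz bound. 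Your suggested ``sharply localized strongly convex support functions'' cannot evade this. Second, even granting $A$, ``growing at least linearly in $N$'' does not follow from the normalization $\lvert\tilde Z(\phi_j^{\otimes k})\rvert_V=1$ in a general locally convex $V$: the unit vectors $v_j\coloneqq\tilde Z(\phi_j^{\otimes k})$ may oscillate so that the partial sums stay bounded, and the sign correction $\phi_j\mapsto\pm\phi_j$ only multiplies $v_j$ by $(\pm1)^k$, which is the identity for even $k$; there is no single $\lambda\in V'$ along which all $v_j$ are uniformly bounded below. Your closing remark that the continuous norm ``forces $\tilde Z(\phi_j,\dots,\phi_j)\to 0$'' also contradicts the rest of your plan, which needs these quantities bounded \emph{away} from zero.

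The paper avoids both issues at once by abandoning the idea of a fixed compact set entirely. After normalizing $\lvert\GW(Z)[\phi_j^{\otimes k}]\rvert_V=1$, it constructs $f_j=c_j\tilde f_j+I_\Oc\in\cone$ where $\tilde f_j$ is a support-function-type localizer that vanishes on a region exhausting $\interior\Oc$ as $j\to\infty$ (in case (a), $\tilde f_j=\max(h_{B_1(0)}(\cdot,-1)-\sqrt{1+(|x_j|-1)^2},0)$, which vanishes on a ball of radius $\approx|x_j|$; case (b) uses large balls tangent near $x_0$). The constants $c_j>0$ making $c_j\tilde f_j+\phi_j$ convex are permitted to grow without bound --- what matters is that $f_j$ and $f_j+\phi_j$ both epi-converge to $I_\Oc$, since they vanish (or diverge) on a set exhausting $\R^n\setminus\partial\Oc$. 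Expanding $\GW(Z)[\phi_j^{\otimes k}]=\sum_{l=0}^k(-1)^{k-l}\binom{k}{l}\overline Z\bigl((f_j+\phi_j)[l],f_j[k-l]\bigr)$ and using continuity of $\overline Z$ together with $Z(I_\Oc)=0$ (Corollary \ref{corollary:VanishingOnIndicators}) shows the limit is $\overline Z(I_\Oc,\dots,I_\Oc)\sum_l(-1)^{k-l}\binom{k}{l}=0$, contradicting the normalization. I recommend replacing the telescoping sum with this direct limit of the individual terms via epi-convergent sequences.
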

	\begin{proof}
        Let $|\cdot|_V$ be a continuous norm on $V$, and denote by $U_\varepsilon(x)$ the open Euclidean ball centred at $x\in\R^n$ with radius $\varepsilon>0$. If the support of $\GW(Z)$ is not compact and contained in $\mathrm{int}\Oc$, then there exists a sequence $(x_j)_j$ in $\Oc$ and a sequence $(\epsilon_j)_j$, $0<\epsilon_j\le 1$, such that $U_{\epsilon_j}(x_j)\subset \Oc$ and either
    \begin{itemize}
        \item[(a)] $(|x_j|)_j$ is increasing and diverges,
        \item[(b)] $(x_j)_j$ converges to a point $x_0\in\partial \Oc$,
    \end{itemize}
    and such that there exists functions $\phi_j\in C^\infty_c(\R^n)$ with $\supp\phi_j\subset U_{\epsilon_j}(x_j)$ with $\GW(Z)[\phi_j\otimes\dots\otimes \phi_j]\ne 0$ for all $j\in\mathbb{N}$. Here we used that $\supp\GW(Z)$ is contained in the diagonal in $(\R^n)^k$ and that $\GW(Z)$ is symmetric. Since $|\cdot|_V$ is a norm, we may thus assume that 
    \begin{align*}
        |\GW(Z)[\phi_j\otimes\dots\otimes \phi_j]|_V=1\quad \text{for every}~j.
    \end{align*}
    Notice that in case (b) the sequence $(\epsilon_j)_j$ has to converge to $0$ for $j\rightarrow\infty$. Passing to a subsequence if necessary, we may assume that $\epsilon_j<\frac{1}{j}$.\\

    We handle case (a) first. Consider the convex function 
    \begin{align*}
    \tilde{f}_j\coloneqq \max\left( h_{B_1(0)}(x,-1)-\sqrt{1+(|x_j|-1)^2},0\right).
    \end{align*}
    Notice that $\tilde{f}_j\in \fconvl$ as it is the maximum of the restriction of two support functions. Moreover, by construction, $\tilde{f}_j$ is twice differentiable on a neighborhood of $U_{\epsilon_j}(x_j)$ and has positive definite Hessian. In particular, there exists $c_j>0$ such that $c_j\tilde{f}_j+\phi_j$ is convex, which therefore belongs to $\fconvl$ by Lemma \ref{lemma:localVariationsConvCD}.\\
    
    Now consider $f_j\coloneqq c_j\tilde{f}_j+I_{\Oc}\in \cone$. If $x\in \Oc$ belongs to the interior of $\Oc$, then $f_j$ coincides with the zero function on a neighborhood of $x$ for all $j\in\mathbb{N}$ large enough. If $x\in \R^n\setminus \Oc$, then $f_j(x)=\infty$ for all $j\in\mathbb{N}$. In particular, $f_j$ epi-converges to $I_\Oc$ for $j\rightarrow\infty$. The same reasoning applies to the function $f_j+\phi_j\in \cone$. We thus obtain
    \begin{align*}
        \lim_{j\rightarrow\infty}|\GW(Z)[\phi_j\otimes\dots\otimes\phi_j]|_V= & \lim_{j\rightarrow\infty}\left|\sum_{l=0}^k(-1)^{k-l}\overline{Z}\left((f_j+\phi_j)[l],f_j[k-l]\right)\right|_V\\ =&|\overline{Z}(I_{\Oc},\dots,I_{\Oc})|_V\cdot\left|\sum_{l=0}^k(-1)^{k-l}\right|=0,
    \end{align*}
    which contradicts $|\GW(Z)[\phi_j\otimes\dots\otimes\phi_j]|_V=1$ for all $j\in\mathbb{N}$. Here we used that $\overline{Z}(I_{\Oc},\dots,I_{\Oc})=Z(I_{\Oc})=0$, compare Corollary \ref{corollary:VanishingOnIndicators}.\\

    Let us prove case (b). In this case, the strategy consists of finding a suitable sequence of functions that converge to the indicator function of a hyperplane tangent to $\Oc$. We consider the sequence of points \[ z_j=x_j+\frac{x_j-x_0}{|x_j - x_0|}\left(j+ \frac{1}{j}\right).\] Each $z_j$ is the center of a ball of radius $j$ at distance $\frac{1}{j}$ from $x_j$ in the direction $x_j-x_0$. We consider the functions \[\tilde{h}_j(x)\coloneqq \max\left(\sqrt{1+|x-z_j|^2}-\sqrt{1+j^2},0\right).\] Since $x_j$ has distance $\frac{1}{j}>\epsilon_j$ from the ball $B_j(z_j)$, $\tilde{h}_j$ is twice differentiable on a compact neighbourhood of $U_{\epsilon_j}(x_j)$ with positive definite Hessian. Therefore, there exists $c_j>0$ such that $c_j\tilde{h}_j+\phi_j$ is convex. By Lemma \ref{lemma:localVariationsConvCD}, these functions belong to $\fconvl$. Consider the function $h_j\in \cone$ defined by
    \begin{align*}
        h_j\coloneqq c_j\tilde{h}_j+I_\Oc.
    \end{align*}
    Note that $h_j,h_j+\phi\in\cone$. By construction, $h_j$ is equal to $0$ on $ \Oc\cap B_{j}(z_j)$. Since $x_j\rightarrow x_0$ and $\epsilon_j\rightarrow 0$, for any $x\in \Oc$ we can find $N\in\mathbb{N}$ such that $x\in B_{j}(z_j)$ for all $j\ge N$. In particular, $h_j$ epi-converges to $I_\Oc$ for $j\rightarrow\infty$. The same argument applies to $h_j+\phi_j$. Therefore, we obtain again 
    \begin{align*}
        \lim_{j\rightarrow\infty}|\GW(Z)[\phi_j\otimes\dots\otimes\phi_j]|_V=&\lim_{j\rightarrow\infty}\left|\sum_{l=0}^k(-1)^{k-l}\overline{Z}\left((f_j+\phi_j)[l],f_j[k-l]\right)\right|_V\\
        =&|\overline{Z}(I_{\Oc},\dots,I_{\Oc})|_V\cdot\left|\sum_{l=0}^k(-1)^{k-l}\right|=0,
    \end{align*}
    which contradicts $|\GW(Z)[\phi_j\otimes\dots\otimes\phi_j]|_V=1$ for all $j\in\mathbb{N}$. Here we used again that $\overline{Z}(I_{\Oc},\dots,I_{\Oc})=Z(I_{\Oc})=0$.
	\end{proof}

	Recall that $\cone^{\infty}\subset \Conv(\R^n,\R)\cap C^\infty(\R^n)$ denotes the subspace of all smooth convex functions $f$ such that $f+I_\A\in \cone$. The following result completes the proof of Theorem \ref{maintheorem:GW}.
	\begin{theorem}
		\label{theorem:RelationGWandValuation}
		Let $V$ be a locally convex vector space that admits a continuous norm, $Z\in \PV_{d,k}(\cone,V)$. Let $\Oc\subset A\subset \A$ be a closed convex set with $I_A\in \cone$. For all $f_1,\dots,f_k\in\cone^{\infty}$,
		\begin{align*}
			\GW(Z)[f_1\otimes\dots\otimes f_k]=\overline{Z}(f_1+I_A,\dots,f_k+I_A).
		\end{align*}
	\end{theorem}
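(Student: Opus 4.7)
The plan is to exploit the compact support of $\GW(Z)$ established in Theorem \ref{thm:comp_supp} in order to localise via a cutoff, and then evaluate $\widetilde Z$ by a combinatorial inclusion--exclusion, with a support--localisation property for the polarisation $\overline Z$ acting as the key technical bridge.

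First, since $V$ admits a continuous norm, Theorem \ref{thm:comp_supp} ensures that $\supp\GW(Z)$ is a compact subset of $\interior \Oc\subset\interior A$. Hence $\GW(Z)$ extends uniquely to a continuous linear functional on $C^\infty((\R^n)^k)$. Fix $\chi\in C_c^\infty(\R^n)$ with $\chi\equiv 1$ on an open neighbourhood $U$ of $\supp\GW(Z)$ and $\supp\chi\subset\interior A$. For $f_1,\dots,f_k\in\cone^\infty$ the smooth tensors $f_1\otimes\cdots\otimes f_k$ and $(\chi f_1)\otimes\cdots\otimes(\chi f_k)$ coincide on the open neighbourhood $U^k$ of $\supp\GW(Z)$, so
\[
\GW(Z)[f_1\otimes\cdots\otimes f_k]=\GW(Z)[(\chi f_1)\otimes\cdots\otimes(\chi f_k)]=\widetilde Z(\chi f_1,\dots,\chi f_k),
\]
the last equality being the defining property of $\GW(Z)$ on $C_c^\infty$.

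Second, I would choose $M>0$ large enough so that $\tilde e:=M\,h_{B_1(0)}(\cdot,-1)+I_A$ lies in $\cone$ and $\tilde e+\chi f_i\in\cone$ for every $i$. This is possible by combining Lemma \ref{lemma:localVariationsConvCD} with the assumption $I_A\in\cone$ and the cone structure, exactly as in the proof of Corollary \ref{corollary:EstimateGW}. Inserting $h_i:=\tilde e$ into the construction of $\widetilde Z$ and using the symmetry of $\overline Z$ to simplify the sum over permutations yields
\[
\widetilde Z(\chi f_1,\dots,\chi f_k)=\sum_{\Gamma\subseteq[k]}(-1)^{k-|\Gamma|}\,\overline Z\bigl((\tilde e+\chi f_j)_{j\in\Gamma},\,\tilde e^{\,k-|\Gamma|}\bigr).
\]

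Third, and this is the main obstacle, I need to replace each cone element $\tilde e+\chi f_j$ by $\tilde e+(f_j+I_A)\in\cone$ inside $\overline Z$. Both functions lie in $\cone$ and coincide on $U\cup A^c$, an open neighbourhood of $\supp\GW(Z)$. The required \emph{support--localisation} for $\overline Z$ follows from the corresponding statement at the level of the distribution: since $\GW(Z)$ has compact support in the diagonal, a smooth function vanishing on a neighbourhood of $\supp\GW(Z)$ in a single coordinate gives a tensor vanishing on a neighbourhood of $\supp\GW(Z)$ in $(\R^n)^k$, and so pairs to zero with $\GW(Z)$; this localisation for the extended functional on $C^\infty$ transfers to $\overline Z$ on $\cone$ via the polarisation identity.

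After performing the substitution, the multi-additivity of $\overline Z$ (applicable since both summands lie in $\cone$) gives
\[
\overline Z\bigl((\tilde e+(f_j+I_A))_{j\in\Gamma},\,\tilde e^{\,k-|\Gamma|}\bigr)=\sum_{T\subseteq\Gamma}\overline Z\bigl((f_j+I_A)_{j\in T},\,\tilde e^{\,k-|T|}\bigr).
\]
Substituting this back and interchanging the order of summation,
\[
\widetilde Z(\chi f_1,\dots,\chi f_k)=\sum_{T\subseteq[k]}\overline Z\bigl((f_j+I_A)_{j\in T},\,\tilde e^{\,k-|T|}\bigr)\sum_{\Gamma\supseteq T}(-1)^{k-|\Gamma|}.
\]
Writing $\Gamma=T\cup S$ with $S\subseteq[k]\setminus T$, the inner sum equals $(-1)^{k-|T|}(1+(-1))^{k-|T|}=0$ unless $T=[k]$, in which case it equals $1$. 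Only the term $\overline Z(f_1+I_A,\dots,f_k+I_A)$ survives, which is the desired identity.
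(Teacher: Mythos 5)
The argument breaks down at your ``substitution'' step, where you replace $\overline Z(\tilde e+\chi f_j,\dots)$ by $\overline Z(\tilde e+(f_j+I_A),\dots)$. You justify this by appealing to a localisation property for $\overline Z$: if two cone elements agree on a neighbourhood of $\supp Z$, then $\overline Z$ takes the same value. But this is precisely the content of Proposition~\ref{proposition:CharacterizationSupport}, and the paper proves that proposition \emph{using} Theorem~\ref{theorem:RelationGWandValuation}, the statement you are trying to prove. Your attempted derivation of the localisation ``via the polarisation identity'' does not close this circle: the polarisation identity (the definition of $\widetilde Z$, and hence of $\GW(Z)$ on compactly supported test functions) only relates $\GW(Z)$ to $\overline Z$ evaluated at arguments of the form $\tilde e + \phi$ with $\phi\in C_c^\infty(\R^n)$. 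The two arguments you want to exchange are $\tilde e+\chi f_j$ (which \emph{is} of that form) and $\tilde e+(f_j+I_A)$ (which is \emph{not}: $f_j+I_A$ is a genuine cone element and their difference on $A$ is $(\chi-1)f_j$, which is not compactly supported). Moreover, $\chi f_j$ is generally not convex, so you cannot use the additivity of $\overline Z$ to split $\overline Z(\tilde e+\chi f_j,\dots)$ and compare summands. There is no mechanism available at this point in the paper that gives the replacement you need.

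The paper's proof instead makes the passage from compactly supported perturbations to genuine cone elements explicit by a limiting argument: for each $\delta>0$ it uses Lemma~\ref{lemma:ReplaceFunctionByBody} to produce bodies $K_{i,\delta}$ with $f_i=h_{K_{i,\delta}}(\cdot,-1)$ on a compact subset $A_{\delta/2}$ of $\interior A$, constructs explicit auxiliary functions $h_{i,\delta}\in\cone$ in the image of $H_A$ so that $h_{i,\delta}+\phi_\delta f_i\in\cone$, shows $h_{i,\delta}\to f_i+I_A$ and $h_{i,\delta}+\phi_\delta f_i\to 2(f_i+I_A)$ in the epi-topology, and then passes to the limit inside the polarisation sum using the continuity of $\overline Z$. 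The binomial identity $\sum_{l=0}^k\binom{k}{l}(-1)^{k-l}2^l=1$ then collapses the sum. Your combinatorics in the final two steps is correct and would work if the substitution could be justified, but without a non-circular argument for that replacement the proof as written has a genuine gap.
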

	\begin{proof}        
         Fix a point $x_0\in\mathrm{int}A$. For $\epsilon\in (0,1)$ set
		\begin{align}\label{eq:approx_domain}
			A_\delta\coloneqq [(1-\delta) [A-x_0]+x_0]\cap B_{\frac{1}{\delta}}(x_0).
		\end{align}
		Then $A_\delta\in\Kn$ is contained in $\mathrm{int}A$, and the family $(A_\delta)_\delta$ is ordered by inclusion with $\bigcup_{\delta>0}A_\delta=\mathrm{int}A$. Since $A_\delta$ is contained in the interior of $\mathrm{int}A$, there exists $\epsilon_\delta>0$ such that $A_\delta+\epsilon_\delta B_1(0)$ is contained in $\mathrm{int}A$ as well. 
        Fix $\phi_\delta\in C^\infty_c(\R^n)$ with the following properties:
        \begin{enumerate}[itemsep=0pt]
            \item $\supp\phi_\delta\subset A_{\delta/2}$,
            \item $0\le \phi_\delta\le 1$,
            \item $\phi_\delta\equiv 1$ on $A_{\delta}$.
        \end{enumerate}
        Since the support of $\GW(Z)$ is compact and contained in $\mathrm{int}\Oc\subset \mathrm{int}A$, we have
		\begin{align*}
			\GW(Z)[f_1\otimes\dots \otimes f_k]=\GW(Z)[(\phi_\delta f_1)\otimes\dots \otimes (\phi_\delta f_k)]
		\end{align*}
		for $\delta>0$ small enough. Note that 
		\begin{align*}
			D^2(\phi_\delta f_i)[x]=\phi_\delta(x) D^2f_i(x)+\left[\nabla \phi_\delta(x)\cdot \nabla f_i(x)^T+\nabla f_i(x)\cdot \nabla \phi_\delta(x)^T\right]+ f_i(x)D^2\phi_\delta(x).
		\end{align*}
        Using Lemma \ref{lemma:ReplaceFunctionByBody}, choose $K_{i,\delta}\in \KN$ for each $\delta>0$ such that $f_i=h_{K_{i,\delta}}(\cdot,-1)$ on $A_{\delta/2}$, and note that $\phi_\delta f_i$ is supported on $A_{\delta/2}$ by construction. It follows that
  \begin{align*}
			&h_{i,\delta}(x)\coloneqq  H_A(K_{i,\delta})+\\ &\left[\|\phi\|_{C^2(\R^n)} \left(\sup_{|z|\le \frac{2}{\delta}}|f_i(z)|+2|\nabla f_i(z)|\right)\left( 1+\frac{4}{\delta^2} \right)^{3/2}  \right]\max \left(\sqrt{1+|x|^2}-\sqrt{1+\frac{1}{\delta^2}},0\right)
		\end{align*}
		is a convex function such that $h_{i,\delta}+\phi_\delta f_i$ is convex. Notice that \[\max \left(\sqrt{1+|x|^2}-\sqrt{1+\frac{1}{\delta^2}},0\right)=h_{\mathrm{conv}\left(B_1(0)-\left(0,\sqrt{1+\frac{1}{\delta^2}}\right),0\right)}(x,-1).\] That is, $\max \left(\sqrt{1+|x|^2}-\sqrt{1+\frac{1}{\delta^2}},0\right)$ is the support function of a convex body. Therefore, $h_{i,\delta}$ and $h_{i,\delta}+\phi_\delta f_i$ belong to the image of $H_A$ by Lemma \ref{lemma:localVariationsConvCD}. In particular, they belong to $\cone$.
        Moreover, by Lemma \ref{lemma:EpiConvergencePointwiseConvergence},
		\begin{align*}
			h_{i,\delta}&\rightarrow f_i+I_A, \\
			h_{i,\delta}+\phi_\delta f_i&\rightarrow 2f_i+I_A=2(f_i+I_A).
		\end{align*}
		Indeed, they converge pointwise on the interior of $A$ and are equal to $\infty$ on $\R^n\setminus A$. From the definition of $\GW(Z)$ and the continuity of $\overline{Z}$, we thus obtain
		\begin{align*}
			&\GW(Z)[f_1\otimes\dots\otimes f_k]=\lim\limits_{\delta\rightarrow0}\GW(Z)[(\phi_\delta f_1)\otimes\dots \otimes (\phi_\delta f_k)]\\
			=&\lim\limits_{\delta\rightarrow0}\sum\limits_{l=0}^k(-1)^{k-l}\frac{1}{(k-l)!l!}\sum_{\sigma\in S_k}\overline{Z}(h_{\sigma(1),\delta}+\phi_\delta f_{\sigma(1)},\dots,h_{\sigma(l),\delta}+\phi_\delta f_{\sigma(l)},h_{\sigma(l+1),\delta},\dots,h_{\sigma(k),\delta})\\
			=&\sum\limits_{l=0}^k(-1)^{k-l}\frac{2^l}{(k-l)!l!}\sum_{\sigma\in S_k}\overline{Z}(f_{\sigma(1)}+I_A,\dots f_{\sigma(k)}+I_A)\\
			=&\overline{Z}(f_{1}+I_A,\dots f_{k}+I_A)\sum\limits_{l=0}^k\binom{k}{l}(-1)^{k-l}2^l=\overline{Z}(f_{1}+I_A,\dots f_{k}+I_A),
		\end{align*}
		since $\sum\limits_{l=0}^k\binom{k}{l}(-1)^{k-l}2^l=(2-1)^k=1$ by the binomial theorem. The claim follows.
	\end{proof}

    For the vector space $V$ we denote by $V'$ its topological dual. Composing a valuation with an element in $V'$, we obtain a real valued valuation. This allows for the following characterization of the Goodey-Weil distributions that holds for arbitrary locally convex vectorspaces.
    \begin{corollary}\label{cor:non_cont_norm}
        Let $Z\in\PV_{d,k}(\cone,V)$, $\lambda\in V'$. Then the support of $\lambda\circ \GW(Z)$ is compact and contained in $\mathrm{int}\Oc$, and for $f_1,\dots,f_k\in \cone^\infty$
        \begin{align*}
            (\lambda\circ \GW(Z))[f_1\otimes\dots\otimes f_k]=\lambda\left(\overline{Z}(f_1+I_\A,\dots,f_k+I_\A)\right).
        \end{align*}
        In particular, $\GW(Z)$ uniquely determines $Z$.
    \end{corollary}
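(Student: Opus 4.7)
The plan is to reduce the general statement to the real-valued case by composing $Z$ with continuous linear functionals $\lambda \in V'$. For any such $\lambda$, the composition $\lambda \circ Z$ lies in $\PV_{d,k}(\cone, \R)$, and the real line trivially admits a continuous norm, so both Theorem \ref{thm:comp_supp} and Theorem \ref{theorem:RelationGWandValuation} apply directly to $\lambda \circ Z$. Viewing $\lambda$ as extended (uniquely and continuously) to the completion $\overline{V}$, the composition $\lambda \circ \GW(Z)$ is then a well-defined $\R$-valued distribution.

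The first key step is to identify $\GW(\lambda \circ Z) = \lambda \circ \GW(Z)$. By Corollary \ref{corollary:existencePolarization}, the map $\lambda \circ \overline{Z}\colon \cone^k \to \R$ is continuous, symmetric, and additive in each argument, with $\lambda \circ \overline{Z}(f,\dots,f) = (\lambda \circ Z)(f)$; the uniqueness of polarization therefore forces $\overline{\lambda \circ Z} = \lambda \circ \overline{Z}$. Inspecting the definition of the auxiliary multilinear functional $\tilde Z$ preceding Corollary \ref{corollary:EstimateGW}, one sees at once $\widetilde{\lambda \circ Z} = \lambda \circ \tilde Z$. Both $\GW(\lambda \circ Z)$ and $\lambda \circ \GW(Z)$ are the distributions produced from these multilinear functionals via the Schwartz kernel theorem, so agreement on simple tensors forces them to coincide.

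With this identification, the first two assertions of the corollary follow immediately. The support statement is Theorem \ref{thm:comp_supp} applied to $\lambda \circ Z$, while the displayed identity is obtained by applying Theorem \ref{theorem:RelationGWandValuation} to $\lambda \circ Z$ and substituting $\overline{\lambda \circ Z} = \lambda \circ \overline{Z}$.

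For the uniqueness assertion, assume $\GW(Z) = 0$. Then $\GW(\lambda \circ Z) = \lambda \circ \GW(Z) = 0$ for every $\lambda \in V'$, and the identity just proved gives
\[ \lambda\bigl(Z(f + I_\A)\bigr) = (\lambda \circ \GW(Z))[f \otimes \dots \otimes f] = 0 \]
for every $f \in \cone^\infty$ and $\lambda \in V'$. Since $V$ is locally convex, $V'$ separates points by Hahn-Banach, and consequently $Z(f + I_\A) = 0$ for all $f \in \cone^\infty$. The main obstacle is upgrading this to $Z \equiv 0$ on all of $\cone$, which requires a density argument. By Lemma \ref{lemma:dual_density} it suffices to treat elements $H_\A(K) = h_K(\cdot,-1) + I_\A$ for $K \in \KN$, and by a standard smoothing procedure each such $K$ is a Hausdorff-limit of convex bodies $K_j \in \KN$ having $C^\infty$ boundary with everywhere positive Gauss-Kronecker curvature. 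For such $K_j$ the support function $h_{K_j}$ is $C^\infty$ away from the origin, so $f_j := h_{K_j}(\cdot,-1)$ is a smooth finite convex function on $\R^n$ with $f_j + I_\A \in \cone$ by the $(\A,\Oc)$-cone axioms; thus $f_j \in \cone^\infty$, and Lemma \ref{lemma:HAContinuous} yields $f_j + I_\A = H_\A(K_j) \to H_\A(K)$. Hence $\cone^\infty + I_\A$ is dense in $\cone$, and continuity of $Z$ forces $Z \equiv 0$.
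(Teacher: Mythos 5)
Your proof is correct and follows the same route as the paper's own: reduce to the real-valued case by composing with $\lambda\in V'$, identify $\lambda\circ\GW(Z)=\GW(\lambda\circ Z)$ from the construction, apply Theorems \ref{thm:comp_supp} and \ref{theorem:RelationGWandValuation} to $\lambda\circ Z$, and conclude uniqueness via Hahn--Banach separation. The paper's proof is considerably more terse, in particular leaving the density of $\{f+I_\A : f\in\cone^\infty\}$ in $\cone$ implicit (it is needed to pass from vanishing on smooth functions to vanishing on all of $\cone$); your explicit smooth-body approximation together with Lemma \ref{lemma:dual_density} and Lemma \ref{lemma:HAContinuous} correctly fills in that step.
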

    \begin{proof}
        As in the proof of the previous result, $\lambda\circ \GW(Z)=\GW(\lambda\circ Z)$ for $\lambda\in V'\cong \overline{V}'$. Since $\lambda\circ Z$ is a real-valued valuation, its support is compact and contained in $\mathrm{int}\Oc$ by Theorem \ref{thm:comp_supp}. Since $\overline{\lambda\circ Z}=\lambda\circ \overline{Z}$, the claim follows from Theorem \ref{theorem:RelationGWandValuation}. Finally, notice that $V'$ separates points in $V$ since $V$ is a locally convex space, so $Z$ is uniquely determined by $\GW(Z)$.
    \end{proof}

    Following the terminology in \cite{Knoerrsupportduallyepi2021}, we define the support of $Z\in \PV_{d,k}(\cone,V)$ to be the set
    \begin{align*}
        \supp Z\coloneqq \Delta_k^{-1}(\supp \GW(Z)),
    \end{align*}
    where $\Delta_k:\R^n\rightarrow(\R^n)^k$ denotes the diagonal embedding. If $Z\in\PV_d(\cone,V)$ and $Z=\sum_{k=0}^{n+d} Z_k$ is its decomposition into homogeneous components, we set
    \begin{align*}
        \supp Z\coloneqq \bigcup_{k=1}^{n+d}\supp Z_k.
    \end{align*}
    Note that the support of any element in $\PV_{d,0}(\cone,V)$ is empty by definition. The following is an adaptation of \cite[Proposition 6.3]{Knoerrsupportduallyepi2021}.
	\begin{proposition}
		\label{proposition:CharacterizationSupport}
		The support of $Z\in \PV_{d}(\cone,V)$ is minimal (with respect to inclusion) among all closed sets $A\subset \R^n$ with the following property: If $f,h\in \cone$ are two functions with $f\equiv h$ on a neighborhood of $A$, then $Z(f)=Z(h)$.
	\end{proposition}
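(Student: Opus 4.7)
Plan: The claim has two parts: (i) $\supp Z$ has the stated property, and (ii) $\supp Z$ is contained in every closed set with the property. I reduce to the homogeneous components of Theorem \ref{maintheorem:HomogeneousDecomposition}, $Z = \sum_{k=0}^{n+d} Z_k$. The $0$-homogeneous component $Z_0$ is constant on $\cone$: Proposition \ref{proposition:PolynomialTopComponentWRTDualTranslations} together with the triviality of $\PV_{d-i,-i}$ for $i\ge 1$ (by scaling $tf$, $t\to 0^+$) forces $Z_0$ to be invariant under adding affine functions; combined with $0$-homogeneity, continuity, and Lemma \ref{lemma:dual_density} (via $tH_\A(K) = H_\A(tK) \to I_\A$ as $t\to 0^+$), this yields $Z_0 \equiv Z_0(I_\A)$, so $Z_0$ contributes to neither direction. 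Moreover, applying any candidate property for $Z$ to $(tf, th)$, $t>0$, and comparing coefficients in the polynomial identity $\sum_k t^k Z_k(f) = \sum_k t^k Z_k(h)$ refines the property to each $Z_k$ individually.

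For (i), fix $k\ge 1$ and $f, h\in\cone$ agreeing on an open $U \supseteq \supp Z_k$. If $f = g_f + I_\A$ and $h = g_h + I_\A$ with $g_f, g_h \in \cone^\infty$, Theorem \ref{theorem:RelationGWandValuation} yields
\[
Z_k(f) - Z_k(h) = \sum_{i=0}^{k-1} \GW(Z_k)\bigl[g_f^{\otimes (k-1-i)} \otimes (g_f - g_h) \otimes g_h^{\otimes i}\bigr],
\]
and each tensor vanishes on a neighborhood of $\supp \GW(Z_k) \subseteq \Delta_k(\supp Z_k) \subseteq \Delta_k(U)$ because $g_f - g_h$ vanishes on $U$ and $\GW(Z_k)$ is supported on the diagonal, so every term is zero. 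For general $f, h \in \cone$, I construct smooth approximations agreeing near $\supp Z_k$: fix nested open sets $\supp Z_k \subset W \subset \overline{W} \subset W' \subset \overline{W'} \subset U \cap \mathrm{int}\Oc$ and a growing sequence $A_j \in \K^n$ with $A_j \supseteq \overline{W'}$ and $\bigcup_j \mathrm{int}\, A_j = \mathrm{int}\A$. Applying Lemma \ref{lemma:ReplaceFunctionByBody} gives $K_{f,j}, K_{h,j} \in \KN$ with $h_{K_{f,j}}(\cdot, -1) = f$ and $h_{K_{h,j}}(\cdot, -1) = h$ on $A_j$, in particular coinciding on $\overline{W'}$. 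Mollifying by a kernel $\rho_{\delta_j}$ with $\delta_j < \mathrm{dist}(\overline{W}, \partial W')$ yields smooth convex functions $g_{f,j} = h_{K_{f,j}}(\cdot,-1) * \rho_{\delta_j}$ and $g_{h,j}$ that lie in $\cone^\infty$ (via Lemma \ref{lemma:localVariationsConvCD}), coincide on $\overline{W}\supseteq\supp Z_k$, and satisfy $g_{f,j} + I_\A \to f$, $g_{h,j} + I_\A \to h$ in $\cone$ (by pointwise convergence on $\mathrm{int}\A$ and Lemma \ref{lemma:EpiConvergencePointwiseConvergence}). The smooth case then gives $Z_k(g_{f,j} + I_\A) = Z_k(g_{h,j} + I_\A)$, and continuity of $Z_k$ passes to the limit.

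For (ii), let $A$ be closed with the property; by the reduction, $A$ has the property for every $Z_k$. I show $\GW(Z_k)[\phi_1 \otimes \cdots \otimes \phi_k] = 0$ for all $\phi_1, \ldots, \phi_k \in C^\infty_c(\R^n \setminus A)$, which together with density of tensor products and $\supp \GW(Z_k) \subseteq \Delta_k(\R^n)$ (Theorem \ref{theorem_support_GW_diagonal}) forces $\supp Z_k \subseteq A$. By Lemma \ref{lemma:HelpLemmaGW}, pick $f = C\sqrt{1+|\cdot|^2} + I_\A \in \cone$ with $C$ large enough that every $\sum_{i\in S} g_i$ (for $\emptyset\ne S\subseteq [k]$, $g_i \in \{f, f+\phi_i\}$) is convex and lies in $\cone$. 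Each such sum differs from $|S| f$ only on $\bigcup_i \supp \phi_i \subseteq \R^n\setminus A$, so the property of $A$ for $Z_k$ combined with $k$-homogeneity gives $Z_k(\sum_{i\in S} g_i) = |S|^k Z_k(f)$. Substituting into the polarization formula and using the identity $\sum_{\emptyset\ne S\subseteq[k]} (-1)^{k-|S|} |S|^k = k!$, we get $\overline{Z_k}(g_1, \ldots, g_k) = Z_k(f)$ independent of the choice. Plugging this into the defining expression for $\tilde Z_k$, every term $\overline{Z_k}(f+\phi_{\sigma(1)}, \ldots, f+\phi_{\sigma(l)}, f, \ldots, f)$ equals $Z_k(f)$, and the alternating sum $\sum_{l=0}^k (-1)^{k-l}\binom{k}{l} = 0$ yields $\GW(Z_k)[\phi_1 \otimes \cdots \otimes \phi_k] = 0$.

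The main obstacle is the general-case approximation in (i): smooth approximations of $f$ and $h$ in $\cone^\infty + I_\A$ must coincide on a neighborhood of $\supp Z_k$. The construction above combines Lemma \ref{lemma:ReplaceFunctionByBody} (which produces support function restrictions that automatically coincide wherever $f = h$) with mollification at a scale smaller than the distance between nested buffer sets (which preserves the coincidence on the inner set).
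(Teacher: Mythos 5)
Your overall strategy parallels the paper's: reduce to homogeneous components, establish the minimality direction via polarization applied to test functions supported away from $A$, and establish the coincidence property via smooth approximations and Theorem~\ref{theorem:RelationGWandValuation}. Your argument for minimality (that $\supp Z_k\subset A$ for any closed $A$ with the property) is correct and essentially equivalent to the paper's, which computes $\GW(Z_k)[\phi_1\otimes\cdots\otimes\phi_k]$ as a mixed partial derivative of $Z\bigl(\sum_i f_{i,t}\bigr)$ rather than via the inclusion--exclusion identity you use; both reduce to the observation that the sums $\sum_{i\in S}g_i$ coincide with $|S|f$ near $A$.

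The coincidence direction, however, has two genuine gaps. First, the proposition is stated for an arbitrary locally convex $V$, but your argument invokes Theorem~\ref{theorem:RelationGWandValuation}, which is only available when $V$ admits a continuous norm (otherwise $\GW(Z_k)$ need not have compact support and the telescoping pairing with the unbounded functions $g_f^{\otimes j}\otimes\cdots$ is not defined). The paper handles this by first reducing to real-valued valuations via $\lambda\circ Z$ for $\lambda\in V'$ (using $\lambda\circ\GW(Z)=\GW(\lambda\circ Z)$ and the fact that $V'$ separates points), and you would need to do the same. Second, your approximation scheme takes $A_j\in\K^n$ with $\bigcup_j\interior A_j=\interior\A$, but Lemma~\ref{lemma:ReplaceFunctionByBody} requires $A_j+B_{\epsilon_j}(0)\subset\interior(\dom f)$, which fails for large $j$ whenever $\dom f\subsetneq\A$. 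And even if one takes $A_j\nearrow\interior(\dom f)$ instead, the mollified functions $g_{f,j}+I_\A$ have domain $\A$, so they cannot epi-converge to $f$ on the open set $\interior\A\setminus\cl(\dom f)$ without additional control on the growth of $h_{K_{f,j}}(\cdot,-1)$. The paper circumvents this cleanly: it first proves $Z(f+I_\Oc)=Z(f)$ (by writing $Z(tf+(1-t)I_\Oc)$ via the polarization and letting $t\to1^-$, using $Z(I_\Oc)=0$ from Corollary~\ref{corollary:VanishingOnIndicators}), reducing to $\dom f=\dom h=\Oc$, after which the exhaustion $O_\delta\nearrow\interior\Oc$ matches the domain exactly and $f_\delta+I_\Oc\to f$ holds. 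You should insert both reductions before your approximation step.
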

	\begin{proof}
       		We may assume that $Z$ is $k$-homogeneous. Let us first show that $\supp Z\subset A$ for all closed sets $A$ with this property. Let $\phi_1,\dots,\phi_k\in C^\infty_c(\R^n)$ be supported on $\R^n\setminus A$. Then there exists $C>0$ such that $f_{i,t}\coloneqq C h_{B_1(0)}(\cdot,-1)+t\phi_i+I_\Oc$ is convex for all $1\le i\le k$, $|t|\le \epsilon$ for some $\epsilon>0$. By Lemma \ref{lemma:localVariationsConvCD}, these functions belong to $\cone$. Since $A$ is closed, $f_{i,t}$ coincides with $f_{i,0}$ on a neighborhood of $A$. Thus,
		\begin{align*}
			\GW(Z)[\phi_1\otimes\dots\otimes\phi_k]=&\frac{1}{k!}\frac{\partial^k}{\partial t_1\dots\partial t_k}\Big|_0Z\left(\sum_{i=1}^kf_{i,t}\right)=\frac{1}{k!}\frac{\partial^k}{\partial t_1\dots\partial t_k}\Big|_0Z\left(\sum_{i=1}^kf_{i,0}\right)=0.
		\end{align*}
		In particular, $\supp\GW(Z)\subset A^k$, which implies $\supp Z\subset A$.\\
  
		Let us show that $\supp Z$ satisfies the property stated above. Assume first that we have proved the statement for all locally convex vector spaces that admit a continuous norm. Given a locally convex vector space $V$ and $Z\in\PV_d(\cone, V)$, let $\lambda\in V'\cong \overline{V}'$ be a continuous linear functional. Then $\lambda\circ Z$ is a continuous, real-valued valuation, and it follows from the construction of the Goodey-Weil distributions that $\lambda\circ \GW(Z)=\GW(\lambda\circ Z)$. In particular, the support of $\GW(\lambda\circ Z)$ is contained in the support of $\GW(Z)$. Since $\lambda\circ Z$ is a real-valued valuation, our assumption implies $(\lambda\circ Z)(f)=(\lambda\circ Z)(h)$ for every $\lambda\in V'$. Since $V'$ separates points in $V$, this implies $Z(f)=Z(h)$.\\
  
        Let us now consider the case that $V$ admits a continuous norm, so the support of $Z$ is compactly contained in the interior of $\Oc$.\\
        Assume that $f,h\in\cone$ coincide on a neighborhood of $\supp Z$. Then $f+I_\Oc$ and $h+I_\Oc$ also coincide on a neighborhood of $\supp Z$ since the support is contained in the interior of $\Oc$. Notice that Corollary \ref{corollary:existencePolarization} implies for $t\in (0,1)$
        \begin{align*}
            Z(t f+(1-t)I_\Oc)=\sum_{i=0}^k\binom{k}{i}t^i(1-t)^{k-i}\overline{Z}(f[i],I_\Oc[k-i]),
        \end{align*}
        so since $\lim_{t\rightarrow1^-}t f+(1-t)I_\Oc=f+I_\Oc\in\cone$, the continuity of $Z$ implies
        \begin{align*}
            Z(f+I_\Oc)=\lim\limits_{t\rightarrow 1^-}Z(t f+(1-t)I_\Oc)=\overline{Z}(f[k],I_\Oc[0])=Z(f).
        \end{align*}
        Therefore, we may assume that the domain of $f$ and $h$ is $\Oc$.
        Fix a point $x_0\in\mathrm{int}\Oc$ and, for $\delta\in (0,1)$, consider the sets $(O_\delta)_\delta$ defined as in \eqref{eq:approx_domain} for the set $\Oc$. By Lemma \ref{lemma:ReplaceFunctionByBody} there thus exist convex bodies $K_{f,O_\delta,\delta_\epsilon},K_{h,O_\delta,\delta_\epsilon}\in\KN$ such that 
		\begin{align*}
			&f=h_{K_{f,O_\delta,\delta_\epsilon}}(\cdot,-1) \, \text{ and } \, 
			h=h_{K_{h,O_\delta,\delta_\epsilon}}(\cdot,-1)
		\end{align*} 
		on $O_\delta$. In particular, $f_\delta\coloneqq h_{K_{f,O_\delta,\delta_\epsilon}}(\cdot,-1)$ and $h_\delta\coloneqq h_{K_{h,O_\delta,\delta_\epsilon}}(\cdot,-1)$ satisfy 
		\begin{align*}
			f=\lim\limits_{\delta\rightarrow0^+}f_\delta+I_\Oc,\\
			h=\lim\limits_{\delta\rightarrow0^+}h_\delta+I_\Oc,
		\end{align*}
		since the functions on the right-hand side converge pointwise outside of the boundary of $\Oc$.
		
		Since $\supp Z$ is contained in the interior of $\Oc$, $h_{K_{f,O_\delta,\delta_\epsilon}}(\cdot,-1)$ and $h_{K_{h,O_\delta,\delta_\epsilon}}(\cdot,-1)$ coincide on a neighborhood of $\supp Z$ for $\delta>0$ small enough. Consider the functions $h_{K_{f,O_\delta,\delta_\epsilon}}, h_{K_{h,O_\delta,\delta_\epsilon}}$ on $\R^{n+1}$. Convolving them with an appropriate sequence of smooth, compactly supported, non-negative functions on $\mathrm{SO}(n+1)$ with support shrinking to $\{Id\}$, we can obtain a sequence of convex and $1$-homogeneous functions on $\R^{n+1}$ that converge to $h_{K_{f,O_\delta,\delta_\epsilon}}, h_{K_{h,O_\delta,\delta_\epsilon}}$ respectively and are smooth on $\R^{n+1}\setminus\{0\}$. As they are $1$-homogeneous and convex, these are support functions of convex bodies in $\R^{n+1}$, so their restriction to $\R^n\times\{-1\}$ defines elements of $\fconvl$. Moreover, since $h_{K_{f,O_\delta,\delta_\epsilon}}(\cdot,-1)$ and $h_{K_{h,O_\delta,\delta_\epsilon}}(\cdot,-1)$ coincide on a neighborhood of $\supp Z$ for sufficiently small $\delta>0$, the functions $h_{K_{f,O_\delta,\delta_\epsilon}}$ and $h_{K_{h,O_\delta,\delta_\epsilon}}$ coincide on a neighborhood of $\supp Z\times\{-1\}$ in $\R^{n+1}$ since they are $1$-homogeneous. Thus, the functions obtained from the convolution also coincide on a (smaller) neighborhood of $\supp Z\times\{-1\}$. In total, we obtain sequences $f_{\delta,j},h_{\delta,j}\in\fconvl$ of smooth functions converging to $f_\delta\coloneqq h_{K_{f,O_\delta,\delta_\epsilon}}(\cdot,-1)$ and $h_\delta\coloneqq h_{K_{h,O_\delta,\delta_\epsilon}}(\cdot,-1)$ respectively such that $f_{\delta,j}=h_{\delta,j}$ on a neighborhood of $\supp Z$. Thus Theorem \ref{theorem:RelationGWandValuation} implies
		\begin{align*}
			Z(f_\delta+I_\Oc)=&\lim\limits_{j\rightarrow\infty}Z(f_{\delta,j}+I_\Oc)=\lim\limits_{j\rightarrow\infty}\GW(Z)[f_{\delta,j}^{\otimes k}]\\=&\lim\limits_{j\rightarrow\infty}\GW(Z)[h_{\delta,j}^{\otimes k}]=\lim\limits_{j\rightarrow\infty}Z(h_{\delta,j}+I_\Oc)=Z(h_\delta+I_\Oc)
		\end{align*}
		for all $\delta>0$ small enough. Since $f_\delta+I_\Oc$ and $h_\delta+I_\Oc$ converge to $f$ and $h$ respectively, we thus obtain
		\begin{align*}
			Z(f)=\lim\limits_{\delta\rightarrow0}Z(f_\delta+I_\Oc)=\lim\limits_{\delta\rightarrow0}Z(h_\delta+I_\Oc)=Z(h).
		\end{align*} 
		Thus, $\supp Z$ has the desired property.
	\end{proof}

    We conclude this section with a technical lemma which we will need for the proof of Theorem \ref{maintheorem:CharacterizationTopComponent}.
    \begin{lemma}
        \label{lemma:supportTranslationPolynomials}
        For $Z\in \PV_{d,k}(\cone,V)$ let $Y_i\in \PV_{d-i,k-i}(\cone,V\otimes \Sym^i(\mathrm{Aff}(n)))$ be the unique valuations with 
        \begin{align*}
            Z(f+\ell)=\sum_{i=0}^dY_i(f)[\ell]\quad\text{for}~f\in\cone,~\ell\in \mathrm{Aff}(n).
        \end{align*}
        Then $\supp Y_i\subset \supp Z$.
    \end{lemma}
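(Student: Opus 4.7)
The plan is to invoke the minimality characterization of the support given in Proposition \ref{proposition:CharacterizationSupport}. Concretely, since $\supp Z$ is a closed subset of $\R^n$, to conclude $\supp Y_i\subset \supp Z$ it suffices to verify that $\supp Z$ has the defining property for $Y_i$: whenever $f,h\in\cone$ coincide on a neighbourhood of $\supp Z$, then $Y_i(f)=Y_i(h)$.

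So let $f,h\in\cone$ agree on an open neighbourhood $U$ of $\supp Z$. For any $\ell\in\mathrm{Aff}(n)$, the functions $f+\ell$ and $h+\ell$ also belong to $\cone$ (since $\cone$ is closed under addition of affine functions) and they still coincide on $U$. Applying Proposition \ref{proposition:CharacterizationSupport} to $Z$ itself, we obtain
\begin{align*}
	\sum_{i=0}^d Y_i(f)[\ell]=Z(f+\ell)=Z(h+\ell)=\sum_{i=0}^d Y_i(h)[\ell]
\end{align*}
for every $\ell\in\mathrm{Aff}(n)$.

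Next, I would exploit the polynomial dependence on $\ell$ to separate the components. Replacing $\ell$ by $t\ell$ for $t\in\R$ gives
\begin{align*}
	\sum_{i=0}^d t^i\bigl(Y_i(f)-Y_i(h)\bigr)[\ell]=0 \qquad \text{for all } t\in\R,\, \ell\in\mathrm{Aff}(n).
\end{align*}
Evaluating at $d+1$ distinct values of $t$ and inverting the resulting Vandermonde system (exactly as in the proof of Proposition \ref{proposition:PolynomialTopComponentWRTDualTranslations}) yields $\bigl(Y_i(f)-Y_i(h)\bigr)[\ell]=0$ for every $\ell$ and every $i$. Under the identification of $V\otimes\Sym^i(\mathrm{Aff}(n))$ with the space of $V$-valued homogeneous polynomials of degree $i$ on $\mathrm{Aff}(n)$, vanishing of all evaluations at $\ell$ forces $Y_i(f)=Y_i(h)$ in $V\otimes\Sym^i(\mathrm{Aff}(n))$.

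Finally, since $Y_i\in \PV_{d-i,k-i}(\cone,V\otimes\Sym^i(\mathrm{Aff}(n)))$ by Proposition \ref{proposition:PolynomialTopComponentWRTDualTranslations} and $V\otimes\Sym^i(\mathrm{Aff}(n))$ is locally convex, Proposition \ref{proposition:CharacterizationSupport} is applicable to $Y_i$, and the inclusion $\supp Y_i\subset\supp Z$ follows from the minimality assertion therein. There is no serious obstacle here: the argument is essentially bookkeeping, with the only point to check being that the vector-valued target $V\otimes\Sym^i(\mathrm{Aff}(n))$ does not cause trouble when invoking Proposition \ref{proposition:CharacterizationSupport}, which it does not since that proposition was proved for arbitrary locally convex targets.
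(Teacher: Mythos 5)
Your proof is correct and essentially identical to the paper's: both invoke the minimality characterization of the support from Proposition \ref{proposition:CharacterizationSupport}, and both use the Vandermonde trick to isolate the $Y_i$'s from the values $Z(f+t\ell)$. The only difference is cosmetic ordering — the paper first writes $Y_i$ as a linear combination of $f\mapsto Z(f+t_j\ell)$ and then applies the support characterization to each of these, whereas you apply the support characterization first and then separate the homogeneous components; the underlying computation is the same.
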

    \begin{proof}
        As in the proof of Theorem \ref{proposition:PolynomialTopComponentWRTDualTranslations}, we may use the inverse of the Vandermonde matrix to express $Y_i$ as a linear combination of the valuations $f\mapsto Z(f+t_i\ell) $ for $0<t_1<\dots< t_{d+1}$. If $f,h\in\cone$ are two functions with $f\equiv h$ on a neighborhood of $\supp Z$, then $f+t_j\ell\equiv h+t_j\ell$ on a neighborhood of $\supp Z$ for all $\ell\in\mathrm{Aff}(n)$ and $j \in \{1,\dots,d+1\}$, so $Z(f+t_j\ell)=Z(h+t_j\ell)$ by Proposition \ref{proposition:CharacterizationSupport}. This implies $Y_i(f)=Y_i(h)$ for all functions $f,h$ with this property, so $\supp Y_i$ is contained in $\supp Z$ by Proposition \ref{proposition:CharacterizationSupport}.
    \end{proof}
 
\section{Extension results}\label{sec:extensions}
    In this section we will prove the extension results in Theorem \ref{maintheorem:extension}. We start with the following more general version on extensions for valuations on $\fconvl$ with compact support.
	\begin{theorem}\label{theorem:extension}
		Let $\cone$ be an $(\A,\Oc)$-cone. If $Z\in \PV_{d}(\fconvl,V)$ has compact support and satisfies $\supp Z\subset\mathrm{int}\Oc$, then there exists a unique valuation $\tilde{Z}\in\PV_{d}(\cone,V)$ such that $Z(f)=\tilde{Z}(f+I_\A)$ for all $f\in \fconvl$.
	\end{theorem}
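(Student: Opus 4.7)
The plan is to construct $\tilde Z$ via the body-truncation supplied by Lemma \ref{lemma:ReplaceFunctionByBody} and then verify each required property in turn. Uniqueness is immediate from density: any valid extension is forced to satisfy $\tilde Z(h_K(\cdot,-1)+I_\A)=Z(h_K(\cdot,-1))$ on $H_\A(\KN)$, which is dense in $\cone$ by Lemma \ref{lemma:dual_density}, so continuity pins down $\tilde Z$ completely. For existence, I would fix once and for all a compact convex set $A\subset\mathrm{int}\Oc$ with $\supp Z\subset\mathrm{int} A$ together with $\epsilon>0$ satisfying $A+B_\epsilon(0)\subset\mathrm{int}\Oc$. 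For each $g\in\cone$, Lemma \ref{lemma:ReplaceFunctionByBody} supplies a body $K_g\in\KN$ with $h_{K_g}(\cdot,-1)=g$ on $A$, and I define $\tilde Z(g):=Z(h_{K_g}(\cdot,-1))$.

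The routine properties follow from Proposition \ref{proposition:CharacterizationSupport} applied to $Z\in\PV_d(\fconvl,V)$, viewing $\fconvl$ as an $(\R^n,\R^n)$-cone: well-definedness of $\tilde Z(g)$ because any two valid choices of $K_g$ yield support functions coinciding on the neighbourhood $A$ of $\supp Z$; the identity $Z(f)=\tilde Z(f+I_\A)$ for $f\in\fconvl$ because $h_{K_{f+I_\A}}(\cdot,-1)$ agrees with $f$ on $A$; and continuity because $g_j\to g$ in $\cone$ forces uniform convergence on $A+B_\epsilon(0)$ via Lemma \ref{lemma:EpiConvergencePointwiseConvergence}, permits a uniform bounding constant in Lemma \ref{lemma:ReplaceFunctionByBody}, and combined with epi-continuity of the Fenchel--Legendre transform yields Hausdorff convergence $K_{g_j}=\epi(g_j^*)\cap B\to K_g$ and hence convergence of the $Z$-values. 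The polynomial property of degree at most $d$ holds on $H_\A(\KN)$ by the corresponding property of $Z$, since there $\tilde Z(H_\A(K)+\ell)=Z(h_K(\cdot,-1)+\ell)$, and extends to all of $\cone$ because pointwise limits of polynomials of bounded degree remain polynomials.

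The main obstacle is the valuation identity. Given $g_1,g_2\in\cone$ with $g_\vee:=g_1\vee g_2$ and $g_\wedge:=g_1\wedge g_2$ also in $\cone$, I would use the single common constant $c:=\max_{A+B_\epsilon(0)}g_\vee$ in Lemma \ref{lemma:ReplaceFunctionByBody} so that all four resulting bodies $K_1,K_2,K_\vee,K_\wedge$ sit inside a common bounding box $B$, with $K_g=\epi(g^*)\cap B$ in every case. The crucial claims are then $K_\wedge=K_1\cap K_2$ and $K_\vee=K_1\cup K_2$. The first is immediate from $(g_1\wedge g_2)^*=\max(g_1^*,g_2^*)$, which gives $\epi g_\wedge^*=\epi g_1^*\cap\epi g_2^*$. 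The second, equivalent to $\epi g_1^*\cup\epi g_2^*=\epi g_\vee^*$, is where convexity of $g_\wedge$ plays the decisive role: for the non-trivial inclusion, suppose an affine function $\ell(x)=\langle y,x\rangle-t$ satisfies $\ell\le g_\vee$ globally but is dominated by neither $g_1$ nor $g_2$; then there are points $x_i$ with $\ell(x_i)>g_i(x_i)$, and $\ell\le g_\vee$ forces $\ell(x_i)\le g_{3-i}(x_i)$, so $g_\wedge(x_i)<\ell(x_i)$. Since the segment $[x_1,x_2]$ lies in $\dom g_1\cap\dom g_2$, on which both $g_i$ are continuous and convex, the intermediate value theorem yields $x_*\in[x_1,x_2]$ with $g_1(x_*)=g_2(x_*)$, hence $g_\vee(x_*)=g_\wedge(x_*)$; convexity of $g_\wedge$ combined with affinity of $\ell$ then gives $g_\vee(x_*)=g_\wedge(x_*)\le\lambda g_\wedge(x_1)+(1-\lambda)g_\wedge(x_2)<\lambda\ell(x_1)+(1-\lambda)\ell(x_2)=\ell(x_*)$, contradicting $\ell\le g_\vee$. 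Once $K_\vee=K_1\cup K_2$ and $K_\wedge=K_1\cap K_2$ are established, the valuation identity for $\tilde Z$ follows at once from the valuation identity for $Z\circ H_{\R^n}$ on $\KN$ supplied by Lemma \ref{lemma:injective}.
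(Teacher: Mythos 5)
Your proposal follows the same strategy as the paper: define $\tilde Z(g) := Z(h_{K_g}(\cdot,-1))$ via the truncation of Lemma \ref{lemma:ReplaceFunctionByBody}, establish well-definedness and uniqueness from Proposition \ref{proposition:CharacterizationSupport} and Lemma \ref{lemma:dual_density}, derive the valuation identity from the epigraph relations $\epi((g_1\wedge g_2)^*)=\epi(g_1^*)\cap\epi(g_2^*)$ and $\epi((g_1\vee g_2)^*)=\epi(g_1^*)\cup\epi(g_2^*)$, and check continuity and polynomiality. (The paper has a typo at this point --- it writes $K_f\cup K_h=K_{f\wedge h}$ and $K_f\cap K_h=K_{f\vee h}$, whereas your version $K_\vee=K_1\cup K_2$, $K_\wedge=K_1\cap K_2$ is the correct one. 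Also, the paper argues polynomiality directly from $\tilde Z(f+\ell)=Z(\tilde f+\ell)$ rather than by density, and proves continuity via Blaschke selection and a subsequence contradiction rather than by asserting Hausdorff convergence of $K_{g_j}$; your density/convergence shortcuts are correct in spirit but slightly more care is needed since the full sequence $K_{g_j}$ need not converge Hausdorff without passing to a subsequence.)

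There is, however, one genuine gap in the detail you add where the paper is terse. In your intermediate-value argument for the inclusion $\epi(g_\vee^*)\subseteq\epi(g_1^*)\cup\epi(g_2^*)$, you assert that the segment $[x_1,x_2]$ lies in $\dom g_1\cap\dom g_2$. This does not follow from the setup: from $g_i(x_i)<\ell(x_i)<\infty$ you get $x_1\in\dom g_1$ and $x_2\in\dom g_2$, hence $[x_1,x_2]\subset\dom g_\wedge=\dom g_1\cup\dom g_2$ (by convexity of $g_\wedge$), but $g_2(x_1)$ or $g_1(x_2)$ may well be $+\infty$, so $g_1-g_2$ need not be finite or continuous along the whole segment and the IVT does not directly apply. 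The statement you want is of course true; a cleaner route is the connectedness argument on epigraphs in $\R^{n+1}$: if $p\in\epi(g_1)\setminus H_\ell^+$ and $q\in\epi(g_2)\setminus H_\ell^+$ (where $H_\ell^+=\{(x,t):t\geq\ell(x)\}$), then $[p,q]\subset\epi(g_1)\cup\epi(g_2)=\epi(g_\wedge)$ is covered by the two closed sets $[p,q]\cap\epi(g_i)$, which by connectedness of $[p,q]$ must intersect, producing a point of $\epi(g_1)\cap\epi(g_2)$ in the open halfspace $\{t<\ell(x)\}$, contradicting $\ell\le g_\vee$. With that repair, your proof is complete and matches the paper's.
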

	\begin{proof}
		Since functions of the form $f+I_\A$, $f\in\fconvl$, are dense in $\cone$ by Lemma \ref{lemma:dual_density}, it is clear that such a valuation is unique if it exists.	\\
		Let us construct the extension. For $f\in\cone$, choose a function $\tilde{f}\in\fconvl$ that satisfies $f=\tilde{f}$ on a neighborhood of $\supp Z$, which exists due to Lemma \ref{lemma:ReplaceFunctionByBody} as $f$ is finite on the interior of $\Oc$. We then define $\tilde{Z}:\cone\rightarrow V$ by
		\begin{align*}
			\tilde{Z}(f)\coloneqq Z(\tilde{f}).
		\end{align*}
		Note that this is independent of the choice of $\tilde{f}$ due to Proposition \ref{proposition:CharacterizationSupport}. In particular $\tilde{Z}(f+I_\A)=Z(f)$ for all $f\in\fconvl$. It is thus sufficient to show that $\tilde{Z}\in\PV_{d}(\cone,V)$.
		
		Let us show that $\tilde{Z}$ is polynomial of degree at most $d$. For $\ell:\R^n\rightarrow\R$ affine, the function $f+\ell$ coincides with $\tilde{f}+\ell$ on a neighborhood of the support of $Z$, so
		\begin{align*}
			\tilde{Z}(f+\ell)=Z(\tilde{f}+\ell),
		\end{align*}
		and the right-hand side of this equation is a polynomial of degree at most $d$ in $\ell$. Thus $\ell\mapsto \tilde{Z}(f+\ell)$ is a polynomial of degree at most $d$ in $\ell$ as well.
		
		Next we show that $\tilde{Z}$ is a valuation. Let $f,h\in \cone$ be two functions such that $f\vee h,f\wedge h\in\cone$. Fix a compact and convex set $A\subset \mathrm{int}\Oc$ containing $\supp Z$ in its interior. Set 
		\begin{align*}
			c\coloneqq\max\left(\sup_{x\in A} |f(x)|,\sup_{x\in A} |h(x)|\right).
		\end{align*}
		Lemma \ref{lemma:ReplaceFunctionByBody} implies that the convex bodies
		\begin{align*}
			K_f\coloneqq&\epi(f^*)\cap \left\{(y,t)\in \R^n\times\R: |y|\le\frac{2c}{\epsilon}, |t|\le c\left(1+\frac{2\max_{x\in A}|x|}{\epsilon}\right)\right\},\\
			K_h\coloneqq&\epi(h^*)\cap \left\{(y,t)\in \R^n\times\R: |y|\le\frac{2c}{\epsilon}, |t|\le c\left(1+\frac{2\max_{x\in A}|x|}{\epsilon}\right)\right\},\\
			K_{f\vee h}\coloneqq&\epi((f\vee h)^*)\cap \left\{(y,t)\in \R^n\times\R: |y|\le\frac{2c}{\epsilon}, |t|\le c\left(1+\frac{2\max_{x\in A}|x|}{\epsilon}\right)\right\},\\
			K_{f\wedge h}\coloneqq&\epi((f\wedge h)^*)\cap \left\{(y,t)\in \R^n\times\R: |y|\le\frac{2c}{\epsilon}, |t|\le c\left(1+\frac{2\max_{x\in A}|x|}{\epsilon}\right)\right\}
		\end{align*}
		are non-empty and their support functions satisfy
		\begin{align*}
			&h_{K_f}(\cdot,-1)=f, & h_{K_h}(\cdot,-1)=h, && h_{K_{f\vee h}}(\cdot,-1)=f\vee h, &&h_{K_{f\wedge h}}(\cdot,-1)=f\wedge h \quad \text{on}~ A,
		\end{align*}
		which is a neighborhood of $\supp Z$. Since $f\wedge h$ is convex, we have
		\begin{align*}
			&\epi((f\vee h)^*)=\epi(f^*)\cup \epi(h^*), &\epi((f\wedge h)^*)=\epi(f^*)\cap \epi(h^*),
		\end{align*}
		which implies $K_f\cup K_h=K_{f\wedge h}$ and $K_f\cap K_h=K_{f\vee h}$. Thus
		\begin{align*}
			&h_{K_f}(\cdot,-1)\vee h_{K_h}(\cdot,-1) =h_{K_{f\vee h}}(\cdot,-1), &&h_{K_f}(\cdot,-1)\wedge h_{K_h}(\cdot,-1) =h_{K_{f\wedge h}}(\cdot,-1)
		\end{align*}
		due to the properties of support functions.	Since $Z$ is a valuation on $\fconvl$, we thus obtain
		\begin{align*}
			\tilde{Z}(f)+\tilde{Z}(h)=&Z(h_{K_f}(\cdot,-1))+Z(h_{K_f}(\cdot,-1))=Z(h_{K_{f\vee h}}(\cdot,-1))+Z(h_{K_{f\wedge h}}(\cdot,-1))\\
			=&\tilde{Z}(f\vee h)+\tilde{Z}(f\wedge h).
		\end{align*}
		 Thus $\tilde{Z}$ is a valuation.
   
		It remains to see that $\tilde{Z}$ is continuous. Assume to the contrary that it is not continuous. Then we can find a sequence $(f_j)_j$  in $\cone$ that epi-converges to $f\in\cone$ such that $|\tilde{Z}(f_j)-\tilde{Z}(f)|_V\ge\epsilon$ for some $\epsilon>0$ and some continuous semi-norm on $V$. Consider the compact neighborhood $A$ of $\supp Z$, which is contained in $\mathrm{int}\Oc$ by assumption. Lemma \ref{lemma:EpiConvergencePointwiseConvergence} implies that the sequence $(f_j)_j$ converges uniformly to $f$ on $A$, so we may in particular assume that $f_j$ is finite on $A$. In particular, there exists $c>0$ such that $\sup_{x\in A} |f_j(x)|\le c$ for all $j\in\mathbb{N}$. Lemma \ref{lemma:ReplaceFunctionByBody} shows that the convex bodies
		\begin{align*}
			K_{f_j}\coloneqq\epi(f_j^*)\cap \left\{(y,t)\in \R^n\times\R: |y|\le\frac{2c}{\epsilon}, |t|\le c\left(1+\frac{2\max_{x\in A}|x|}{\epsilon}\right)\right\}
		\end{align*}
		are non-empty and satisfy $f_j=h_{K_{f_j}}(\cdot,-1)$ on a neighborhood of $\supp Z$ for all $j\in\mathbb{N}$. Note that the sequence $(K_{f_j})_j$ is bounded in $\KN$. By the Blaschke selection theorem we may thus assume that the sequence converges to some body $K\in\KN$, passing to a subsequence if necessary. For $x\in A$, Lemma \ref{lemma:EpiConvergencePointwiseConvergence} thus implies
		\begin{align*}
			f(x)=\lim_{j\rightarrow\infty}f_j(x)=\lim_{j\rightarrow\infty}h_{K_j}(x,-1)=h_K(x,-1),
		\end{align*}
		so $h_K(\cdot,-1)$ and $f$ coincide on a neighborhood of the support of $Z$. Thus
		\begin{align*}
			\lim\limits_{j\rightarrow\infty}\tilde{Z}(f_j)=\lim\limits_{j\rightarrow\infty}Z(h_{K_j}(\cdot,-1))=Z(h_K(\cdot,-1))=\tilde{Z}(f),
		\end{align*}
		which contradicts $|\tilde{Z}(f_j)-\tilde{Z}(f)|_V\ge \epsilon$ for all $j\in\mathbb{N}$. Thus $\tilde{Z}$ has to be continuous.
		\end{proof}

        Theorem \ref{maintheorem:extension} follows from the previous result and the following corollary.
		\begin{corollary}\label{cor:ExtensionBijecitveCorrespondence}
			Let $V$ be a locally convex vector space that admits a continuous norm. The map
			\begin{align*}
				\PV_{d,k}(\cone,V)&\rightarrow\PV_{d,k}(\fconvl,V)\cap\{Z:\supp Z\subset \mathrm{int}\Oc\}\\
				Z&\mapsto \left[f\mapsto Z(f+I_\A)\right]
			\end{align*}
			is well-defined and bijective.
		\end{corollary}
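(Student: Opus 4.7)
The plan is to verify the restriction map is well-defined with the claimed support condition, to obtain injectivity from density of the image of $H_\A$, and to deduce surjectivity from Theorem~\ref{theorem:extension} combined with the homogeneous decomposition.

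For well-definedness, I would check that $\widehat Z(f)\coloneqq Z(f+I_\A)$ inherits each required property from $Z\in\PV_{d,k}(\cone,V)$. The valuation identity is formal, since $(f\vee h)+I_\A = (f+I_\A)\vee(h+I_\A)$ and analogously for minima. Continuity follows from Lemma~\ref{lemma:cont_add} applied to $f\mapsto f+I_\A$, polynomiality of degree at most $d$ is immediate from $\widehat Z(f+m) = Z((f+I_\A)+m)$ for $m\in\mathrm{Aff}(n)$, and $k$-homogeneity uses the identity $tI_\A = I_\A$ for $t>0$ to give $\widehat Z(tf) = Z(t(f+I_\A)) = t^k\widehat Z(f)$. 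For the support condition, if $f,h\in\fconvl$ coincide on an open neighborhood $U$ of $\supp Z$, I would shrink $U$ inside $\mathrm{int}\A$, which is possible since $\supp Z\subset\mathrm{int}\Oc\subset\mathrm{int}\A$. On this smaller $U$ one has $f+I_\A=f$ and $h+I_\A=h$, so these two elements of $\cone$ coincide on a neighborhood of $\supp Z$, and Proposition~\ref{proposition:CharacterizationSupport} gives $Z(f+I_\A)=Z(h+I_\A)$. Hence $\supp\widehat Z\subset\supp Z\subset\mathrm{int}\Oc$.

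Injectivity is immediate: if $Z_1,Z_2\in\PV_{d,k}(\cone,V)$ yield the same restriction, then $Z_1\circ H_\A=Z_2\circ H_\A$ and Lemma~\ref{lemma:injective}(a) forces $Z_1=Z_2$. For surjectivity, let $Z\in\PV_{d,k}(\fconvl,V)$ with $\supp Z\subset\mathrm{int}\Oc$. The continuous-norm hypothesis together with Theorem~\ref{thm:comp_supp} guarantees that $\supp Z$ is automatically compact, so Theorem~\ref{theorem:extension} provides some $\tilde Z\in\PV_d(\cone,V)$ with $\tilde Z(f+I_\A)=Z(f)$ for all $f\in\fconvl$. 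The only remaining point is promoting $\tilde Z$ from $\PV_d$ to $\PV_{d,k}$. For this, I would decompose $\tilde Z=\sum_{j=0}^{n+d}\tilde Z_j$ via Theorem~\ref{maintheorem:HomogeneousDecomposition} and restrict each summand to $\fconvl$. By the well-definedness above, each restriction lies in $\PV_{d,j}(\fconvl,V)$, and their sum equals $Z$. Since $Z$ is $k$-homogeneous, uniqueness of the homogeneous decomposition on $\fconvl$ forces the restriction of each $\tilde Z_j$ with $j\neq k$ to vanish, and applying the already-established injectivity componentwise then yields $\tilde Z_j=0$ for $j\neq k$. Thus $\tilde Z=\tilde Z_k\in\PV_{d,k}(\cone,V)$.

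The substantive content here is already carried by Theorem~\ref{theorem:extension} and the support machinery of Proposition~\ref{proposition:CharacterizationSupport}; the main subtlety at this stage is the indirect recovery of $k$-homogeneity of the extension, which must be handled through the uniqueness of the homogeneous decomposition together with the componentwise injectivity rather than by any direct computation on $\cone$.
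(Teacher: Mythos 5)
Your proof is correct and follows the same overall structure as the paper: well-definedness via Proposition~\ref{proposition:CharacterizationSupport} and Theorem~\ref{thm:comp_supp}, injectivity via density of the image of $H_\A$ (Lemma~\ref{lemma:dual_density}), and surjectivity via Theorem~\ref{theorem:extension}. The one place where you deviate is your treatment of the $k$-homogeneity of the extension. The paper states surjectivity tersely, leaving it implicit that the extension $\tilde Z$ provided by Theorem~\ref{theorem:extension} is $k$-homogeneous whenever the given $Z$ is. You recover this fact by decomposing $\tilde Z$ into homogeneous components, restricting each to $\fconvl$, invoking uniqueness of the decomposition there, and applying componentwise injectivity. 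This works, but it is more roundabout than necessary: since $\tilde Z\circ H_\A(tK)=\tilde Z(tH_\A(K))=Z(th_K(\cdot,-1))=t^kZ(h_K(\cdot,-1))=t^k\,\tilde Z\circ H_\A(K)$, Lemma~\ref{lemma:injective}(b) gives $k$-homogeneity of $\tilde Z$ directly; alternatively it is immediate from the explicit construction $\tilde Z(f)=Z(\tilde f)$ in the proof of Theorem~\ref{theorem:extension}, since one may take $\widetilde{tf}=t\tilde f$. Both the direct check and your decomposition argument are valid; the direct one saves you the appeal to Theorem~\ref{maintheorem:HomogeneousDecomposition}.
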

		\begin{proof}
		We first prove that this map is well defined. Let $Z \in \PV_{d,k}(\cone,V)$. Given $f,g \in \fconvl$, \[ (f\vee g)+I_\A=(f+I_\A)\vee (g+I_\A) \text{ and } (f\wedge g)+I_\A=(f+I_\A)\wedge (g+I_\A). \] Moreover, pointwise addition is continuous with respect to epi-convergence and well-defined as long as the functions involved have domains whose intersection has non-empty interior. Therefore, $Z(\cdot + I_\A)$ defines a continuous valuation on $\fconvl$. Similarly, one easily verifies that this is a polynomial valuation that is homogeneous of degree $k$. Since $V$ admits a continuous norm, the support of $Z$ is a compact subset of $\interior \Oc$ by Theorem \ref{thm:comp_supp}. Now the characterization of the support in Proposition \ref{proposition:CharacterizationSupport} shows that the support of $f\mapsto Z(f+I_\A)$ is also compact and contained in $\mathrm{int}\Oc$, so the map is well defined.\\
        Note that the map is injective since the image of $H_\A$ is dense in $\cone$ by Lemma \ref{lemma:dual_density}. It is also onto by Theorem \ref{theorem:extension}, since every valuation in $\PV_d(\fconvl,V)$ has compact support contained in $\interior \Oc$. This concludes the proof.
		\end{proof}

    \begin{proof}[Proof of Corollary \ref{maincorollary:maximalExtension}]
        Since $\supp Z$ is compact, $\mathrm{conv}(\supp Z)$ is a compact and convex subset of $\R^n$. For $\delta>0$
        \begin{align*}
            \cone_{\delta}:=\{f\in\fconv: \mathrm{conv}(\supp Z)+B_\delta(0)\subset \dom f\}.
        \end{align*}
        Then 
        \begin{align*}
            \cone_Z=\bigcup_{\delta>0} \cone_{\delta}.
        \end{align*}
        Note that $\cone_{\delta}$ is an $(\R^n,\mathrm{conv}(\supp Z)+B_\delta(0))$-cone. Since $\supp Z$ is compact and contained in the interior of $\mathrm{conv}(\supp Z)+B_\delta(0)$, $Z$ extends uniquely by continuity to $Z_{\delta}\in\PV_d(\cone_{\delta},V)$ for all $\delta>0$. Note that the extensions coincide on $\cone_{\delta_1}\cap\cone_{\delta_2}$  for all $\delta_1,\delta_2>0$ due to Theorem \ref{theorem:extension}. In particular, we may define $Z_{\max}:\cone_Z\rightarrow V$ by
        \begin{align*}
            Z_{\max}(f)=Z_{\delta}(f)\quad\text{if}~f\in \cone_{\delta }.
        \end{align*}
        This obviously implies that $Z_{\max}$ is polynomial of degree at most $d$ with respect to the addition of affine functions. Note that if $f,h\in\cone_Z$ are two functions such that $f\wedge h,f\vee h\in \cone_Z$, then there exists $\delta>0$ such that $A_\delta=\mathrm{conv}(\supp Z)+B_{\delta}$ is contained in the domains of these functions. In particular, $f,h,f\wedge h$ and $f\vee h$ all belong to $\cone_\delta$. Since $Z_{\max}$ coincides with $Z_\delta$ on this set, it is a valuation. \\
        It remains to see that $Z_{\max}$ is continuous. Fix $f\in \cone_Z$ and choose $\delta>0$ such that $A_\delta=\mathrm{conv}(\supp Z)+B_\delta(0)\subset \interior\dom f$. If $(f_j)_j$ is a sequence on $\cone_Z$ converging to $f$, then this sequence converges uniformly to $f$ on $A_\delta$ by Lemma \ref{lemma:EpiConvergencePointwiseConvergence}. In particular, we may assume that $f_j$ is finite on $A_\delta$. Thus all of the functions belong to $\cone_\delta$, so the claim follows from the fact that $Z_{\max}$ is given by $Z_\delta$ on this set, which is continuous.
    \end{proof}
  
\section{Characterization of $\PV_{d,n+d}(\cone,\R)$}\label{sec:top_degree}
    In this section, we provide an extension of the following characterization result obtained by Colesanti, Ludwig, and Mussnig in \cite[Theorem 5] {ColesantiEtAlhomogeneousdecompositiontheorem2020}.
	\begin{theorem}\label{theorem:ClassificationTopDegVConv}
		For every $Z\in \PV_{0,n}(\fconvf,\R)$ there exists a unique $\phi\in C_c(\R^n)$ such that
		\begin{align*}
			Z(f)=\int_{\R^n\times\R^n}\phi(x)d\Theta_0(f;(x,y))\quad\text{for all}~f\in\Conv(\R^n,\R).
		\end{align*}
	\end{theorem}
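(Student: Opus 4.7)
The plan is to combine the Goodey-Weil machinery of Section \ref{sec:GW} with the structure of the Hessian measure $\Theta_0$. The proof naturally splits into three steps: well-definedness of the candidate map, uniqueness of $\phi$, and existence.

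First I would verify that for every $\phi \in C_c(\R^n)$ the functional
\[
Z_\phi(f) \coloneqq \int_{\Rn\times \Rn} \phi(x)\,d\Theta_0(f;(x,y))
\]
defines an element of $\PV_{0,n}(\fconvf,\R)$. The valuation property, weak continuity under epi-convergence, and $n$-homogeneity are inherited from the corresponding properties of $\Theta_0$; for dual-epi-translation invariance one uses that $\Theta_0(f+\ell;\cdot)$ differs from $\Theta_0(f;\cdot)$ only by a translation in the $y$-variable, so integrating a function of $x$ alone removes the $\ell$-dependence. For uniqueness of $\phi$, I would test against $f_{x_0}(x) = |x-x_0|$: since the subgradient of $f_{x_0}$ equals $\overline{B_1(0)}$ at $x_0$ and is a singleton at every other point, the projection of $\Theta_0(f_{x_0};\cdot)$ onto the $x$-variable is the Monge-Amp\`ere measure $|B_1(0)|\cdot\delta_{x_0}$, whence $Z_\phi(f_{x_0}) = |B_1(0)|\cdot \phi(x_0)$, and $Z_\phi \equiv 0$ forces $\phi \equiv 0$.

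For existence, I would start from the Goodey-Weil distribution $\GW(Z)$, which by Theorem \ref{maintheorem:GW} and Theorem \ref{theorem_support_GW_diagonal} is compactly supported and concentrated on the diagonal in $(\Rn)^n$. Dual-epi-translation invariance forces the polarization $\overline Z$ to vanish whenever any argument is affine, so $\GW(Z)$ annihilates tensors with an affine factor in any slot. Using Theorem \ref{theorem:RelationGWandValuation}, for smooth test functions $f_i \in \cone^\infty$ one has $\overline Z(f_1,\ldots,f_n)=\GW(Z)[f_1\otimes\cdots\otimes f_n]$; comparing with the mixed-discriminant representation $\overline{Z_\phi}(f_1,\ldots,f_n)=\int \phi(x)\,D(D^2 f_1(x),\ldots, D^2 f_n(x))\,dx$ that any candidate must satisfy, I would show that, after factoring out the kernel spanned by affine functions in each slot, $\GW(Z)$ depends only on the Hessians $D^2 f_i$ evaluated at a common base point and is therefore representable by a continuous density $\phi\in C_c(\Rn)$ via the mixed discriminant. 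Once $\phi$ is extracted, $Z$ and $Z_\phi$ share the same Goodey-Weil distribution, hence agree on $\cone^\infty$ by Corollary \ref{cor:non_cont_norm}, and therefore agree on all of $\fconvf$ by density and continuity.

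The main obstacle is the regularity step of the existence argument: one must upgrade $\GW(Z)$ from a general distribution (whose order is only bounded via Corollary \ref{corollary:EstimateGW}) to integration against a \emph{continuous} density times the mixed Monge-Amp\`ere measure. The top-degree constraint $k = n$ is essential here, as it exhausts all available second-derivative slots in the test tensors, eliminating the transverse derivatives that would otherwise prevent $\phi$ from being a function rather than a higher-order distribution.
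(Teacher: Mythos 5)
The paper does not prove this statement. Theorem \ref{theorem:ClassificationTopDegVConv} is quoted verbatim from Colesanti, Ludwig, and Mussnig \cite[Theorem 5]{ColesantiEtAlhomogeneousdecompositiontheorem2020}, and the surrounding text merely comments on the uniqueness of $\phi$, pointing the reader to \cite{KnoerrUlivelli2023} and \cite{KnoerrUnitarily2021} for that discussion. So your proposal is an attempt to re-derive a cited result rather than a rival to an in-paper proof; the authors' own contribution in Section \ref{sec:top_degree} is the reduction of Theorem \ref{maintheorem:CharacterizationTopComponent} to this known theorem, not a proof of the theorem itself.

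As a proof of the CLM result, your sketch has a genuine gap exactly where you flag it. After showing $\GW(Z)$ is compactly supported on the diagonal and annihilates affine perturbations, the order bound from Corollary \ref{corollary:EstimateGW} only tells you that each transverse slot is of order at most $2$; modding out affine directions then leaves a multilinear form in the Hessians $D^2 f_i(x)$ with a \emph{distributional} coefficient on the diagonal. Two things are still missing: (a) why that multilinear form must be the mixed discriminant rather than, say, $\prod_i \operatorname{tr}(D^2 f_i)$ or some other invariant tensor of symmetric matrices --- this is where the valuation property of $Z$, which is not visible in $\GW(Z)$ as a bare kernel, has to be invoked; and (b) why the remaining coefficient is a \emph{continuous function} rather than a general compactly supported Radon measure. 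The order count leaves zero derivatives for the diagonal variable, so you only get a measure; already in the model case $n=1$, where the claim reduces to $T[f]=S[f'']$ for a measure $S$ of order $0$, nothing forces $S$ to have a continuous density without using epi-continuity of $Z$ in a substantive way. Your own uniqueness computation with $f_{x_0}(x)=|x-x_0|$ actually suggests the right fix --- define $\phi(x_0)\coloneqq Z(f_{x_0})/\vol_n(B_1(0))$, obtain continuity of $\phi$ from continuity of $Z$ and $x_0\mapsto f_{x_0}$, and then prove $Z=Z_\phi$ by density --- but that proof of $Z=Z_\phi$ is essentially the whole content of CLM's argument and is not supplied here.
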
 \noindent
    As mentioned in the introduction, $\Theta_0(f;\cdot)$ is the Hessian measure  of order $0$ of $f$ (see \cite{ColesantiEtAlHessianValuations2020} and the references therein). The uniqueness of the function $\phi$ in the statement of Theorem \ref{theorem:ClassificationTopDegVConv} is not stated explicitly in \cite{ColesantiEtAlhomogeneousdecompositiontheorem2020} although it is implied by the proof. For a further discussion of the uniqueness, see \cite[Theorem 1.4]{KnoerrUlivelli2023} and \cite{KnoerrUnitarily2021}. The relation between $\supp Z$ and $\supp \phi$ is made clearer by the following result.
	\begin{lemma}[\cite{KnoerrUnitarily2021} Corollary 2.11]\label{lemma:TopDegSupportDensity}
		Let $Z\in \PV_{0,n}(\fconvf,\R)$ by given by 
        \begin{equation*}
            Z(f)=\int_{\R^n\times\R^n}\phi(x)d\Theta_0(f;(x,y))
        \end{equation*} for some $\phi\in C_c(\R^n)$. Then $\supp Z \subseteq \supp\phi\subseteq \mathrm{conv}(\supp Z)$.
	\end{lemma}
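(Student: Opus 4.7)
The plan is to prove the two inclusions separately, both using Proposition \ref{proposition:CharacterizationSupport}.

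For $\supp Z \subseteq \supp \phi$, I would invoke the locality of the Hessian measure: if $f, h \in \fconvf$ agree on an open set $U$, then the restrictions of $\Theta_0(f;\cdot)$ and $\Theta_0(h;\cdot)$ to $U \times \R^n$ coincide. This is a standard property of $\Theta_0$, reflecting the fact that it is determined by the subdifferential map of $f$, which is a local object. Consequently, if $f \equiv h$ on a neighborhood of $\supp \phi$, then $\int \phi\, d\Theta_0(f;\cdot) = \int \phi\, d\Theta_0(h;\cdot)$, so $Z(f) = Z(h)$. The minimality characterization in Proposition \ref{proposition:CharacterizationSupport} then yields $\supp Z \subseteq \supp \phi$.

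For the more involved inclusion $\supp \phi \subseteq \mathrm{conv}(\supp Z)$, I would prove that $\phi$ vanishes on $W \coloneqq \R^n \setminus \mathrm{conv}(\supp Z)$ by showing it is locally affine there and then exploiting its compact support. Fix any open ball $B$ with $\overline{B} \subset W$, a positive definite symmetric matrix $A$, and $\eta \in C_c^\infty(B)$. For $|\epsilon|$ sufficiently small, the smooth convex function
\begin{equation*}
f_{A,\epsilon}(x) \coloneqq \tfrac{1}{2}\langle Ax, x\rangle + \epsilon \eta(x)
\end{equation*}
belongs to $\fconvf$ and coincides with $f_{A,0}$ outside $B$, hence on a neighborhood of $\supp Z$ (since $\overline{B}$ is disjoint from $\mathrm{conv}(\supp Z) \supseteq \supp Z$). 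Proposition \ref{proposition:CharacterizationSupport} therefore gives $Z(f_{A,\epsilon}) = Z(f_{A,0})$ for all such $\epsilon$. Since $f_{A,\epsilon}$ is $C^\infty$, the classical pushforward description of $\Theta_0$ reduces the integral representation to
\begin{equation*}
Z(f_{A,\epsilon}) = \int_{\R^n} \phi(x)\det(A + \epsilon D^2\eta(x))\, dx,
\end{equation*}
which is a polynomial in $\epsilon$. Differentiating the constancy in $\epsilon$ at zero produces
\begin{equation*}
\det A \int_{\R^n} \phi(x)\,\mathrm{tr}(A^{-1} D^2 \eta(x))\, dx = 0.
\end{equation*}
As $A$ ranges over all positive definite matrices, so does $A^{-1}$, and since positive definite matrices span the space of symmetric matrices, we deduce $\int \phi\,\partial_{ij}\eta\,dx = 0$ for every $i, j$ and every $\eta \in C_c^\infty(B)$. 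Thus $D^2\phi = 0$ on $B$ in the distributional sense, so $\phi$ is affine on $B$. Since $B$ was arbitrary, $\phi$ is affine on each connected component of $W$. Each such component is unbounded (as $\mathrm{conv}(\supp Z)$ is compact by Theorem \ref{thm:comp_supp}) and $\phi$ has compact support, so the resulting affine function must vanish identically on $W$. Hence $\supp \phi \subseteq \mathrm{conv}(\supp Z)$.

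The main technical obstacle I anticipate is justifying the locality of $\Theta_0$ on all of $\fconvf$, as used in the first inclusion. For smooth convex functions this is transparent from the pushforward formula already employed in the second inclusion, but the extension to arbitrary elements of $\fconvf$ relies on the subdifferential-based definition of $\Theta_0$ together with its weak continuity under epi-convergence; this is a known but delicate property of Hessian measures that would be invoked as a standard fact.
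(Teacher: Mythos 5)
The paper does not prove this lemma itself; it cites it as \cite[Corollary~2.11]{KnoerrUnitarily2021}, so there is no in-paper proof to compare against. On its own terms your argument is correct and complete. The first inclusion follows exactly as you say from the locality of $\Theta_0$ (a measure concentrated on the graph of the subdifferential, which is a local object for convex functions) together with the minimality part of Proposition~\ref{proposition:CharacterizationSupport}. For the second inclusion, the perturbation $f_{A,\epsilon}=\tfrac12\langle Ax,x\rangle+\epsilon\eta$ with $\supp\eta\subset B$ and $\overline{B}\subset W$ is a clean choice: convexity for small $\lvert\epsilon\rvert$, constancy of $Z(f_{A,\epsilon})$ in $\epsilon$ via Proposition~\ref{proposition:CharacterizationSupport}, Jacobi's formula, and the fact that $\{A^{-1}:A\succ0\}$ is an open cone spanning $\Sym^2(\R^n)$ together give $D^2\phi=0$ distributionally on $B$, hence local affineness. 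Working with $W=\R^n\setminus\mathrm{conv}(\supp Z)$ rather than $\R^n\setminus\supp Z$ is the right simplification, since it guarantees every component of $W$ is unbounded once one knows $\supp Z$ is compact; and compactness does hold here because $\fconvf$ is an $(\R^n,\R^n)$-cone and $\R$ admits a continuous norm, so Theorem~\ref{thm:comp_supp} applies. The one external fact you rely on, locality of $\Theta_0$ on all of $\fconvf$, is indeed standard and you are right to flag it; it is immediate from the definition via the graph of $\partial f$ and is stated in the Hessian-measure literature cited in the paper.
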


    \begin{proof}[Proof of Theorem \ref{maintheorem:CharacterizationTopComponent}]
        Let $\cone$ be an $(\A,\Oc)$-cone. By Theorem \ref{maintheorem:extension}, the space $\PV_{d,n+d}(\cone,\R)$ is in natural bijection with the subspace of $\PV_{d,n+d}(\fconvl,\R)$ of all valuations with support contained in $\mathrm{int} \Oc$. In turn, these valuations are in natural bijection with the subspace of all valuations in $\PV_{d,n+d}(\fconvf)$ with the same support restriction. It is thus sufficient to show that any such valuation admits an integral representation of the desired type, where the function $\phi\in C_c(\R^n,\Sym^d(\mathrm{Aff}(n)))$ is supported in the interior of $\mathrm{int}\Oc$.\\
        Thus let $Z\in \PV_{d,n+d}(\fconvf)$ be a valuation with $\supp Z\subset \mathrm{int}\Oc$. Since the support of $Z$ is compact, its convex hull $A\coloneqq\mathrm{conv}(\supp Z)$ is compact and contained in the interior of $\Oc$.\\
        Let $Y_i\in \PV_{d-i,n+d-i}(\cone,\Sym^i(\mathrm{Aff}(n)))$ by the unique valuations with 
        \begin{align*}
            Z(f+\ell)=\sum_{i=0}^{d}Y_i(f)[\ell]\quad\text{for all}~f\in\fconvf,~\ell\in \mathrm{Aff}(n),
        \end{align*}
        compare Proposition \ref{proposition:PolynomialTopComponentWRTDualTranslations}. By Lemma \ref{lemma:supportTranslationPolynomials}, $\supp Y_i\subset \supp Z$.
        We thus obtain that the valuation $Y_d\in \PV_{0,n}(\fconvf,\Sym^d(\mathrm{Aff}(n)))$ satisfies $\supp Y_d\subset A$. Applying Theorem \ref{theorem:ClassificationTopDegVConv} to the different components of 
        \begin{align*}
            Y_d\in \PV_{0,n}(\fconvf,\Sym^d(\mathrm{Aff}(n)))\cong \PV_{0,n}(\fconvf,\R)\otimes \Sym^d(\mathrm{Aff}(n)),
        \end{align*} we deduce that there exists a function $\phi\in C_c(\R^n,\Sym^d(\mathrm{Aff}(n)))$ such that
        \begin{align*}
            Y_d(f)[\ell]=\int_{\R^n\times\R^n}\phi(x)[\ell]d\Theta_0(f,(x,y))\quad\text{for all}~f\in\fconvf,~\ell\in\mathrm{Aff}(n).
        \end{align*}
        Lemma \ref{lemma:TopDegSupportDensity} shows that $\phi$ is supported on $A$. Define $\tilde{Z}\in \PV_{d,n+d}(\fconvf)$ by 
        \begin{align*}
            \tilde{Z}(f)\coloneqq\int_{\R^n\times \Rn }\phi(x)[(y,f(x))]d\Theta_0(f; (x,y)).
        \end{align*}
        Notice that this is well-defined due to \cite[Theorem 17]{ColesantiEtAlhomogeneousdecompositiontheorem2020}.\\
        It follows that $Z-\tilde{Z}\in \PV_{d-1,n+d}(\fconvf)$. Since this space is trivial by Theorem \ref{maintheorem:HomogeneousDecomposition}, we obtain that $Z=\tilde{Z}$ has the desired representation. 
    \end{proof}

\section{Geometric formulation of the results} \label{sec:dualr_results}

As mentioned in the introduction, the results in this article admit a more geometric interpretation obtained by composing the given valuations with the Fenchel-Legendre transform. This duality has been widely employed to transfer results between different spaces of convex functions. Compare for example, \cite{ColesantiEtAlhomogeneousdecompositiontheorem2020, ColesantiEtAlHadwigertheoremconvex2020, ColesantiEtAlHadwigertheoremconvex2021, ColesantiEtAlHadwigertheoremconvex2022, ColesantiEtAlHadwigertheoremconvex2022a}. In this section, we present the outcome of this approach in our setting.

We start by recalling some geometric operations on convex functions and related key properties of the Fenchel-Legendre transform. For $f,g \in \fconv$ their infimal convolution is defined as \[ f \square g (x) \coloneqq \inf_{y \in \Rn} \{f(x)+g(x-y)\}.\] Geometrically, this corresponds to Minkowski addition of epi-graphs, that is, \[ \epi(f \square g)= \epi(f)+\epi(g).\] In particular, for $X=(x,t) \in \Rnn$ the epi-translation of $f \in \fconv$ can be written as \[\epi(f)+X=\epi(f(\cdot - x) + t).\] For $t>0$ the operation of epi-multiplication is given by $\lambda \sq f (x) = \lambda f\left( \frac{x}{\lambda}\right)$, and for $\lambda=0$ we set $0 \sq f = \rho_f$, where \[\rho_{f}(x) \coloneqq \lim_{\lambda \rightarrow \infty} \frac{f(\lambda x)}{\lambda}, \, x \neq 0,\] which is called the recession function of $f$. Geometrically, \[ \epi(\lambda \sq f)=\lambda \epi(f)\quad\text{for}~\lambda>0.\] 

The following properties of the Fenchel-Legendre transform are well-known (see, for example, \cite{RockafellarConvex1997}).
\begin{proposition}\label{proposition:properties_transform}
     Let $f, g \in \fconv, \lambda,\eta \geq 0$, $X=(x,t)\in \Rnn$ and $K \in \Kn$. Then 
\begin{itemize}[itemsep=0pt]
    \item[i)] $(f^*)^*=f$,
    \item[ii)] $(f\vee g)^*=f^* \wedge g^*$,
    \item[iii)] $(\lambda f + \eta g)^*=(\lambda \sq f^*)\square (\eta \sq g^*)$,
    \item[iv)] $f \geq g \Rightarrow f^* \leq g^*$,
    \item[v)]  $(f(\cdot -x)+t)^*=f^*-t+\langle x, \cdot \rangle$,
    \item[vi)] $(h_K)^*=I_K$.
\end{itemize}
\end{proposition}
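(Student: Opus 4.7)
The plan is to verify each of the six identities directly from the definition \eqref{eq:F-L} combined with the biconjugation theorem; all are classical facts with fully detailed proofs in Rockafellar's monograph \cite{RockafellarConvex1997}. The one identity requiring any real effort is (i); the remaining five follow essentially by substitution or by quoting (i).

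For (i), I would parametrise affine functions on $\Rn$ by pairs $(x,s)\in\Rn\times\R$ via $\ell_{x,s}(y)\coloneqq \langle x,y\rangle - s$, and observe that $\ell_{x,s}\le f$ pointwise if and only if $(x,s)\in\epi(f^*)$. Since $f\in\fconv$ is proper, convex, and lower semi-continuous, $\epi(f)$ is a non-empty proper closed convex subset of $\Rnn$; the Hahn-Banach separation theorem, applied to $\epi(f)$ together with points $(y,t)$ with $t<f(y)$, produces enough continuous affine minorants of $f$ to conclude
\[ f(y)=\sup\{\ell_{x,s}(y):(x,s)\in\epi(f^*)\}=\sup_x\{\langle x,y\rangle - f^*(x)\}=(f^*)^*(y). \]
The delicate point is handling boundary behaviour when $y\in\partial\dom f$, where vertical separating hyperplanes may need to be approximated by non-vertical ones; this is exactly where lower semi-continuity and properness of $f$ enter.

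Properties (iv), (v), and (vi) follow immediately from the definition: (iv) from the monotonicity of $-f$ in the supremum; (v) from the change of variables $y\mapsto y-x$ in \eqref{eq:F-L} together with the constant shift; and (vi) from the direct computation $(I_K)^*(x)=\sup_{y\in K}\langle x,y\rangle=h_K(x)$ combined with (i) applied to the proper convex lower semi-continuous function $I_K$. Identity (ii) reduces to writing $-\max=\min(-\cdot,-\cdot)$, converting $(f\vee g)^*$ into a supremum of a pointwise minimum that is then compared with $\min(f^*,g^*)$. For (iii), the strategy is to expand the right-hand side using the definitions of $\lambda\sq$ and of the infimal convolution $\square$, substitute the definitions of $f^*,g^*$, and apply a convex-concave inf-sup interchange; this yields the identity for $\lambda,\eta>0$, while the boundary case $\lambda=0$ reduces to the recession-function interpretation of $0\sq f^*=\rho_{f^*}$. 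The main obstacle throughout is the bookkeeping imposed by the extended real values of the functions and ensuring that intermediate operations preserve membership in $\fconv$; the properness and lower semi-continuity hypotheses built into $\fconv$ are precisely what is needed to make the minimax exchanges and the epigraphical separation arguments work.
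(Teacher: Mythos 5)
The paper offers no proof of its own: it cites Rockafellar's monograph, as these are classical facts about the Fenchel--Legendre transform. Your plan to verify the six items directly is a legitimate alternative, and items (i), (iv), (v), (vi) go through essentially as you sketch them.

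However, your sketch of (ii) would not close. Rewriting $(f\vee g)^*(x)=\sup_y\min\bigl(\langle x,y\rangle-f(y),\,\langle x,y\rangle-g(y)\bigr)$ only yields $(f\vee g)^*\le f^*\wedge g^*$, since a supremum of pointwise minima is bounded above by the minimum of the suprema; equality fails in general. On $\R$, take $f\equiv 0$ and $g(y)=y$: then $(f\vee g)^*=I_{[0,1]}$ but $f^*\wedge g^*=I_{\{0,1\}}$. The identity (ii) is valid under the additional hypothesis, implicit in the paper, that $f\wedge g$ is also convex, i.e.\ that $\epi(f)\cup\epi(g)$ is convex (compare the step ``Since $f\wedge h$ is convex\dots'' in the proof of Theorem \ref{theorem:extension}). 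Under that hypothesis the correct route is not a sup/min interchange: one combines the unconditional identity $(f\wedge g)^*=f^*\vee g^*$ (here sup of max does equal max of sup) with biconjugation, after checking that in this situation $f^*\wedge g^*$ is itself closed and convex. Likewise, the inf--sup interchange you invoke for (iii) is Fenchel's duality theorem and requires a constraint qualification such as $\mathrm{relint}(\dom f)\cap\mathrm{relint}(\dom g)\ne\emptyset$; without it only the inequality $(\lambda f+\eta g)^*\le(\lambda\sq f^*)\square(\eta\sq g^*)$ holds. In the paper's applications the relevant domains always contain $\interior\Oc\ne\emptyset$, so the qualification is met, but your sketch should record it for the step to be valid.
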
\noindent

Recall that the recession cone of a set $A \subset \R^{n+1}$ is the maximal cone $\rec(A)$ such that $A+ \rec(A) \subseteq A$. If we set $A=\epi(f^*)$, then $\rec(A)$ corresponds to the epigraph of the recession function $\rho_{f^*}$. 
\begin{theorem}[\cite{RockafellarConvex1997} Theorem 13.3]
\label{thm:rec}
    Let $f \in \fconv$. The support function of $\dom f$ is then the recession function $\rho_{f^*}$ of $f^*$, and the support function of $\dom f^*$ is the recession function of $\rho_{f}$ of $f$. 
\end{theorem}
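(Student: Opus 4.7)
The plan is to establish the first identity $h_{\dom f} = \rho_{f^*}$ directly from the definitions, and then to derive the second identity by applying involutivity of the Fenchel--Legendre transform. Unpacking definitions, for $x \neq 0$ we have
\begin{equation*}
\frac{f^*(\lambda x)}{\lambda} = \sup_{y \in \dom f}\Bigl\{\langle x, y\rangle - \tfrac{f(y)}{\lambda}\Bigr\}, \quad \lambda > 0.
\end{equation*}
The lower bound $\rho_{f^*}(x) \geq h_{\dom f}(x)$ is then immediate: for every fixed $y_0 \in \dom f$ the quantity $\langle x, y_0\rangle - f(y_0)/\lambda$ converges to $\langle x, y_0\rangle$ as $\lambda \to \infty$, and taking the supremum over $y_0 \in \dom f$ yields $h_{\dom f}(x)$.

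For the reverse inequality I would exploit the fact that $\lambda \mapsto f^*(\lambda x)$ is convex, so that the difference quotient $(f^*(x_0 + \lambda x) - f^*(x_0))/\lambda$ is non-decreasing in $\lambda > 0$ for any fixed $x_0 \in \dom f^*$, with supremum equal to $\rho_{f^*}(x)$. Since $f \in \fconv$ is minorized by some affine function $\langle a,\cdot\rangle - b$ (a standard consequence of properness and lower semi-continuity), the point $a$ lies in $\dom f^*$, so such an $x_0$ exists. This affine minorant also provides uniform control of $f(y)/\lambda$ as $\lambda \to \infty$: a near-maximizer $y_\lambda$ of the above supremum with $\langle x, y_\lambda\rangle > h_{\dom f}(x) + \varepsilon$ would force $y_\lambda$ to escape $\dom f$, contradicting the finiteness of $f(y_\lambda)$. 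If instead $\dom f$ is unbounded in the direction $x$ and $h_{\dom f}(x) = +\infty$, one exhibits a sequence $(y_\lambda)$ in $\dom f$ with $\langle x, y_\lambda\rangle \to \infty$ while $f(y_\lambda)/\lambda$ stays bounded (again by the affine minorant), which forces $\rho_{f^*}(x) = +\infty$.

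The second identity follows by applying the first with $f^*$ in place of $f$ and invoking Proposition~\ref{proposition:properties_transform}(i) to replace $(f^*)^*$ by $f$, yielding $h_{\dom f^*}(x) = \rho_{(f^*)^*}(x) = \rho_f(x)$. The hard part will be the careful handling of infinite values: one has to match the directions in which $h_{\dom f}$ and $\rho_{f^*}$ both equal $+\infty$, and ensure that the passage to the limit inside the supremum is legitimate even when $\dom f$ is unbounded. The affine minorant together with the monotonicity of the difference quotient of $f^*(\lambda x)$ should make both points routine to verify.
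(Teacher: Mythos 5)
The paper cites this as Rockafellar's Theorem~13.3 and supplies no proof, so there is nothing internal to compare against; I will assess your sketch on its own terms. Your lower bound and your deduction of the second identity from the first via $(f^*)^*=f$ are both fine. The upper bound, however, has a gap. You propose to rule out near-maximizers $y_\lambda$ of $\sup_{y\in\dom f}\{\langle x,y\rangle - f(y)/\lambda\}$ that satisfy $\langle x,y_\lambda\rangle > h_{\dom f}(x)+\varepsilon$; but every $y$ competing in that supremum already satisfies $\langle x,y\rangle \le h_{\dom f}(x)$ by the very definition of the support function, so this case never occurs and the argument is vacuous. The real difficulty sits in the other term: you need $\sup_{y\in\dom f}\bigl(-f(y)\bigr)/\lambda \to 0$, which fails whenever $\inf f=-\infty$ (which is allowed in $\fconv$), and the affine minorant $f\ge\langle a,\cdot\rangle-b$ only gives $-f(y)\le b-\langle a,y\rangle$, a quantity unbounded over an unbounded $\dom f$ that does not become negligible after dividing by $\lambda$.

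The anchored difference quotient you mention in passing is exactly what repairs this, and once you commit to it the affine minorant becomes unnecessary. Fix any $x_0\in\dom f^*$. For every $y$ one has $\langle x_0,y\rangle - f(y)\le f^*(x_0)$, hence
\begin{equation*}
f^*(x_0+\lambda x)=\sup_{y\in\dom f}\bigl\{\bigl(\langle x_0,y\rangle-f(y)\bigr)+\lambda\langle x,y\rangle\bigr\}\le f^*(x_0)+\lambda\, h_{\dom f}(x),
\end{equation*}
so $\bigl[f^*(x_0+\lambda x)-f^*(x_0)\bigr]/\lambda\le h_{\dom f}(x)$ for all $\lambda>0$; the term $f^*(x_0)$ absorbs the possibly very negative values of $f$ uniformly, with no separate control of $f(y)/\lambda$ required. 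Letting $\lambda\to\infty$ yields $\rho_{f^*}(x)\le h_{\dom f}(x)$. One further caution: the paper's formula $\rho_g(x)=\lim_{\lambda\to\infty}g(\lambda x)/\lambda$ agrees with the recession function in Rockafellar's Theorem~13.3 when, e.g., $0\in\dom g$ (for $g=f^*$ this means $\inf f>-\infty$); in full generality the anchored difference quotient is the correct definition and is what the cited theorem computes, so you should not silently switch between the two.
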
\noindent
In particular, domain restrictions on a function $f\in\fconv$ translate to lower and upper bounds on the recession cone of $\epi(f^*)$. This allows us to give the dual definition of an $(\A,\Oc)$-cone. Denote by $\mathfrak{A}$ and $\mathfrak{O}$ the convex cones in $\R^{n+1}$ given by
\begin{equation}\label{eq:cones}
    \mathfrak{A}=\epi(h_\A), \, \mathfrak{O}=\epi(h_\Oc).
\end{equation}
We say that a subset $\tilde{\cone}\subset \fconv$ is an $(\mathfrak{A}, \mathfrak{O})$-cone if
\begin{itemize}
	\item the set $\tilde{\cone}$ is a cone with respect to infimal convolution and epi-multiplication with $\lambda>0$,
	\item the inclusions $\mathfrak{A} \subset \rec(\epi(f)) \subset \mathfrak{O}$ hold for every $f\in\tilde{\cone}$,
	\item the functions $h_\A$ and $h_\Oc$ belong to $\tilde{\cone}$,
	\item for every $K\in\mathcal{K}^{n+1}$, the function $\lfloor K \rfloor  \square h_\A$ belongs to $\tilde{\cone}$. 
\end{itemize}
Notice that the last property is equivalent to the following: For every $K \in \KN$, the function $\lfloor K + \mathfrak{A} \rfloor$ belongs to the cone $\tilde{\cone}$ (compare \eqref{eq:floor_function} for the definition of $\lfloor \cdot \rfloor$, and note that this definition extends to more general closed convex sets, see \cite[Theorem 5.3]{RockafellarWetsVariationalAnalysis1998}). 
\begin{remark}
    The choice $\mathfrak{A}=\mathfrak{O}$ gives a family of functions with fixed recession cones. Up to a translation of the epi-graph, this corresponds to the notion of pseudo-cones introduced by Schneider \cite{SchneiderPseudocones2024}.
\end{remark}

The Fenchel-Legendre transform establishes a continuous bijection between the family of finite convex functions $\fconvf$, and the family of super-coercive convex functions
\begin{align*}
    \fconvs\coloneqq \left\{f\in\fconv:\lim_{|x|\rightarrow\infty}\frac{f(x)}{|x|}=\infty\right\},
\end{align*}
where both spaces are equipped with the topology induced by epi-convergence, compare, for example, \cite{ColesantiEtAlhomogeneousdecompositiontheorem2020}. Moreover, property $ii)$ in Proposition \ref{proposition:properties_transform} shows that a valuation on the former space induces a valuation on the latter and vice-versa. Due to  property $v)$ in Proposition \ref{proposition:properties_transform}, we introduce the following notion: For $f\in \fconv$ and $X=(x,t)\in \R^{n+1}$ let $\tau_Xf\in \fconv$ denote the function obtained from $f$ by an epi-translation by $X$, that is, 
\begin{align*}
    (\tau_Xf)(y)=f(y-x)+t.
\end{align*}
Let $\tilde{\cone}$ denote an $(\mathfrak{A}, \mathfrak{O})$-cone. We call a valuation $Z:\tilde{\cone}\rightarrow V$ into a Hausdorff topological vector space polynomial of degree at most $d$ (with respect to epi-translations) if the map
\begin{align*}
    X\mapsto Z(\tau_Xf)
\end{align*}
is a polynomial of degree at most $d$ in $X\in\R^{n+1}$ for all $f\in \tilde{\cone}$. The previous discussion and Proposition \ref{proposition:properties_transform} thus imply the following.
\begin{proposition}\label{proposition:dual_valuations}
     Let $\cone$ be an $(\A,\Oc)$-cone, and $\mathfrak{A}$ and $\mathfrak{O}$ the two closed and convex cones in $\Rnn$ satisfying \eqref{eq:cones}. Then the cone \[\tilde\cone \coloneqq \{f \in \fconv : f=g^*, g \in \cone \}\] is an $(\mathfrak{A},\mathfrak{O})$-cone. Moreover, a functional $Z:\cone\rightarrow V$ into a Hausdorff topological vector space belongs to $\PV_d(\Gamma,V)$, if and only if the functional
     \begin{align*}
         \tilde Z : \tilde \cone &\rightarrow V\\
         f &\mapsto Z(f^*)
     \end{align*}
     is a valuation on $\tilde \cone$ which is continuous and polynomial of degree at most $d$ with respect to epi-translations.
\end{proposition}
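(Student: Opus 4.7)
The plan is to translate, term by term, each defining property of an $(\A,\Oc)$-cone and each of the three attributes (valuation, continuity, polynomiality of degree $\leq d$) through the Fenchel-Legendre transform, using the dictionary in Proposition \ref{proposition:properties_transform} and Theorem \ref{thm:rec}. Since $(f^*)^* = f$, the involution turns each such translation into a genuine equivalence, so both directions of the claimed ``if and only if'' can be handled simultaneously.

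First I would verify that $\tilde\cone$ is an $(\mathfrak{A}, \mathfrak{O})$-cone. Closure under $\square$ and $\sq$ is read off of property $iii)$ of Proposition \ref{proposition:properties_transform}: if $f_i = g_i^*$ with $g_i \in \cone$ and $\lambda_i > 0$, then $(\lambda_1 \sq f_1) \square (\lambda_2 \sq f_2) = (\lambda_1 g_1 + \lambda_2 g_2)^* \in \tilde\cone$, since $\cone$ is a cone under pointwise addition and positive scalar multiplication. The recession sandwich $\mathfrak{A} \subset \rec(\epi f) \subset \mathfrak{O}$ follows from Theorem \ref{thm:rec}, which identifies $\rho_f$ with the support function $h_{\dom g}$, combined with the inclusions $\Oc \subset \dom g \subset \A$. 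The memberships $h_\A, h_\Oc \in \tilde\cone$ are exactly property $vi)$ applied to $I_\A, I_\Oc \in \cone$. Finally, $iii)$ once more gives $(\lfloor K\rfloor \square h_\A)^* = \lfloor K\rfloor^* + h_\A^* = h_K(\cdot,-1) + I_\A \in \cone$, placing $\lfloor K\rfloor \square h_\A$ in $\tilde\cone$ for every $K \in \KN$.

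For the second claim, property $ii)$ together with $(f^*)^* = f$ shows that pairs $f, h \in \tilde\cone$ with $f \vee h, f \wedge h \in \tilde\cone$ correspond bijectively to pairs in $\cone$ whose lattice operations stay in $\cone$, and the identity $(f\vee h)^* = f^* \wedge h^*$ transforms the valuation relation on one side into the valuation relation on the other. Polynomiality transfers via $v)$: the epi-translation $\tau_X f$ corresponds under duality to the addition of the affine function $\ell_X(y) = \langle x, y \rangle - t$ for $X = (x,t) \in \Rnn$, and $X \mapsto \ell_X$ is a linear isomorphism $\Rnn \to \mathrm{Aff}(n)$, so the two polynomial conditions match degree by degree. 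Continuity is transported by the classical fact that the Fenchel-Legendre transform is a homeomorphism of $\fconv$ onto itself with respect to epi-convergence, which I would cite from \cite{RockafellarWetsVariationalAnalysis1998}.

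No step is conceptually difficult once the dictionary is set up; the only mild point is that the continuity of $f \mapsto f^*$ in the epi-topology is not recalled earlier in the paper and therefore requires a short citation. The main discipline throughout is symmetry: whenever an axiom or attribute is transported across the transform, one must verify that the relevant operations (pointwise lattice operations, pointwise addition, positive scaling, epi-translations, epi-operations) keep the cone invariant on both sides, so that each equivalence is truly two-sided.
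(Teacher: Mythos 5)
Your proof follows the same path as the paper, which itself only says ``the previous discussion and Proposition~\ref{proposition:properties_transform} imply the following'' — so the content of that terse proof is precisely the dictionary you spell out. Your verification of the four $(\mathfrak{A},\mathfrak{O})$-cone axioms via properties $iii)$, $vi)$, and Theorem~\ref{thm:rec} is correct, the transfer of polynomiality via property $v)$ is correct, and you are right that the continuity of $f\mapsto f^*$ with respect to epi-convergence (Wijsman's theorem) is invoked implicitly and deserves a citation.

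One point worth flagging, since you lean on the phrase ``bijective correspondence'': the paper's property $ii)$, ``$(f\vee g)^* = f^*\wedge g^*$'', is \emph{not} an unconditional identity of conjugates (the always-valid version is $(f\wedge g)^* = f^*\vee g^*$; the other direction a priori only gives $(f\vee g)^* = (f^*\wedge g^*)^{**}$). What makes the transfer of the valuation property legitimate is the nontrivial fact that if $f,h,f\vee h,f\wedge h$ all lie in $\fconv$ — in particular $f\wedge h$ is convex — then $f^*\wedge h^*$ is again convex, so the biconjugate is superfluous and both identities hold on the nose; this in turn rests on the valuation property $f^*+h^* = (f\vee h)^*+(f\wedge h)^*$ of the Legendre transform. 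You inherit this imprecision directly from the paper's statement of $ii)$, so it is not a flaw specific to your argument, but the claim that the lattice-compatible pairs on the two sides ``correspond bijectively'' is exactly this fact and would merit a sentence of justification in a fully rigorous write-up.
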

 We say that a functional $Z:\tilde{\cone}\rightarrow V$ is epi-homogeneous of degree $k \in \N$ if \[ Z(\lambda \sq f)=\lambda^i Z(f)\] for every $f \in \tilde{\cone}$ and $\lambda \geq 0$. Proposition \ref{proposition:properties_transform} shows that this corresponds to the previous notion of homogeneity for valuations on $\cone$ under the Fenchel-Legendre transform. 
The following is the dual version of Theorem \ref{maintheorem:HomogeneousDecomposition}.
\begin{theorem}
		\label{maintheorem:HomogeneousDecomposition_geom}
		Let $\tilde{\cone}$ be an $(\mathfrak{A},\mathfrak{O})$-cone, $V$ a Hausdorff topological vector space, and $Z:\tilde{\cone}\rightarrow V$ a continuous valuation that is polynomial of degree at most $d$ with respect to epi-translations. Then there exist unique continuous valuations $Z_k:\tilde{\cone}\rightarrow V, k \in \{0,\dots,n+d \}$, such that $Z_k$ is polynomial of degree at most $d$ with respect to epi-translations, epi-homogeneous of degree $k$, and
		\begin{align*}
			Z=\sum_{k=0}^{n+d}Z_k.
		\end{align*}
	\end{theorem}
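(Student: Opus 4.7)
The plan is to reduce the claim to its primal counterpart, Theorem \ref{maintheorem:HomogeneousDecomposition}, via the Fenchel-Legendre transform. First, I will invoke Proposition \ref{proposition:dual_valuations} to associate to $\tilde{\cone}$ the $(\A,\Oc)$-cone $\cone \coloneqq \{g^* : g \in \tilde{\cone}\}$, and to $Z$ the functional $Z^\sharp \colon \cone \rightarrow V$ defined by $Z^\sharp(g) \coloneqq Z(g^*)$. Since the Fenchel-Legendre transform is involutive on $\fconv$ and Proposition \ref{proposition:dual_valuations} is an if-and-only-if statement, the hypotheses on $Z$ imply $Z^\sharp \in \PV_d(\cone, V)$.

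Next, I will apply Theorem \ref{maintheorem:HomogeneousDecomposition} to obtain the unique decomposition $Z^\sharp = \sum_{k=0}^{n+d} Z^\sharp_k$ into homogeneous components $Z^\sharp_k \in \PV_{d,k}(\cone, V)$. Defining $Z_k \colon \tilde{\cone} \rightarrow V$ by $Z_k(f) \coloneqq Z^\sharp_k(f^*)$, Proposition \ref{proposition:dual_valuations} applied in the opposite direction ensures that each $Z_k$ is a continuous valuation on $\tilde{\cone}$ which is polynomial of degree at most $d$ with respect to epi-translations, and clearly $Z = \sum_{k=0}^{n+d} Z_k$ pointwise on $\tilde{\cone}$.

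The final step is to verify epi-homogeneity of $Z_k$. A direct computation from the definition of epi-multiplication gives
\[
(\lambda \sq f)^*(x) = \sup_y\{\langle x, y\rangle - \lambda f(y/\lambda)\} = \lambda\, f^*(x), \quad \lambda > 0,
\]
so for every $f \in \tilde{\cone}$,
\[
Z_k(\lambda \sq f) = Z^\sharp_k((\lambda \sq f)^*) = Z^\sharp_k(\lambda f^*) = \lambda^k Z^\sharp_k(f^*) = \lambda^k Z_k(f).
\]
Uniqueness of the components transfers from the primal side: any competing dual decomposition pulls back via $Z_k \mapsto Z_k \circ (\cdot)^*$ to a homogeneous decomposition of $Z^\sharp$, which is unique by Theorem \ref{maintheorem:HomogeneousDecomposition}, and hence coincides with the one constructed above.

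I do not anticipate any serious obstacle here: the notions of $(\mathfrak{A},\mathfrak{O})$-cone, epi-translation, and epi-multiplication have been designed precisely so that Fenchel duality intertwines them with the corresponding primal notions of $(\A,\Oc)$-cone, translation, and scalar multiplication. The only point requiring mild care is confirming that Proposition \ref{proposition:dual_valuations} is genuinely reversible, which is immediate from the involutivity $f^{**} = f$ on $\fconv$.
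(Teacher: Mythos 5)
Your proposal is correct and takes the same route the paper intends: the paper presents Theorem \ref{maintheorem:HomogeneousDecomposition_geom} as an immediate consequence of Proposition \ref{proposition:dual_valuations} together with the primal decomposition Theorem \ref{maintheorem:HomogeneousDecomposition}, which is exactly what you carry out. Your verification of epi-homogeneity via $(\lambda\sq f)^*=\lambda f^*$ and your remark on the reversibility of Proposition \ref{proposition:dual_valuations} (which holds because each defining property of an $(\A,\Oc)$-cone is interchanged term by term with the corresponding property of an $(\mathfrak{A},\mathfrak{O})$-cone under the involutive Fenchel--Legendre transform, via Proposition \ref{proposition:properties_transform} and Theorem \ref{thm:rec}) fill in the details the paper leaves implicit.
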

  
    Let $Z:\tilde{\cone}\rightarrow V$. We say that a closed set $A\subset \R^n$ supports $Z$ if it has the following property: If $U$ is an open neighborhood of $A$ and $f,g\in \tilde{\cone}$ are two functions such that 
    \begin{align}\label{eq:supp_geom}
        \text{for every } y \in U \text{ there exists } x \in \R^n \text{ such that }y \in \partial f(x) \cap \partial g(x)\text{ and }f(x)=g(x),
    \end{align}
    then $Z(f)=Z(g)$. Notice that \eqref{eq:supp_geom} by Lemma \ref{lemma:PropertiesSubgradients} implies that for every $y \in U$ \[ f^*(y)=\langle x,y \rangle - f(x)= \langle x, y \rangle - g(x)=g^*(y).\] That is, $f^*$ and $g^*$ coincide on the open set $U$. On the other hand, if $f^*(y)=g^*(y)$ for every $y \in U$, there exists $x \in \R^n$ such that $x \in \partial f^*(y) \cap \partial g^*(y)$ with the desired property by Lemma \ref{lemma:PropertiesSubgradients}. Thus, the definition of support given above corresponds precisely to the description of the support given in Proposition \ref{proposition:CharacterizationSupport}. We thus obtain the following dual version of Theorem \ref{maintheorem:extension}.
 \begin{theorem}
		\label{maintheorem:extension_geom}
        Let $(\A,\Oc)$ and $(\mathfrak{A},\mathfrak{O})$ be defined as before, and let $V$ be a locally convex vector space. Consider a continuous valuation $Z: \fconvcd \rightarrow V$ that is polynomial of degree at most $d$ with respect to epi-translation, and such that $\supp Z$ is compact and satisfies $\supp Z\subset \mathrm{int}\Oc$. For every $(\mathfrak{A},\mathfrak{O})$-cone $\tilde{\cone}$ there exists a unique valuation $Z_{\tilde{\cone}}: \tilde\cone \rightarrow V$, continuous and polynomial of degree at most $d$ with respect to epi-translation, such that
		\begin{align*}
			Z(f)=Z_{\tilde{\cone}}(f \square h_\A)\quad\text{for every}~f\in\fconvcd.
		\end{align*}
	\end{theorem}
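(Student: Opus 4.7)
The plan is to deduce Theorem \ref{maintheorem:extension_geom} from Theorem \ref{maintheorem:extension} by transferring everything via the Fenchel-Legendre transform, using Proposition \ref{proposition:dual_valuations} as the dictionary between the primal and dual settings. First I would pass to the primal side: associate to $\tilde\cone$ the $(\A,\Oc)$-cone $\cone \coloneqq \{g^* : g \in \tilde\cone\}$ (by the symmetric version of Proposition \ref{proposition:dual_valuations}, since $f^{**}=f$ on $\fconv$), and rewrite the given $Z$ as a valuation on $\fconvl$. Because the Fenchel-Legendre transform is a continuous bijection between $\fconvcd$ and $\fconvl$, Proposition \ref{proposition:dual_valuations} (applied to $\fconvl$) shows that
\[\hat Z(g) \coloneqq Z(g^*), \qquad g \in \fconvl,\]
defines an element of $\PV_d(\fconvl, V)$. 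As explained in the paragraph preceding the theorem, the geometric notion of support used here coincides with the support of $\hat Z$ in the sense of Proposition \ref{proposition:CharacterizationSupport}, so the hypotheses force $\supp \hat Z$ to be compact and contained in $\mathrm{int}\,\Oc$.

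Next, I would invoke Theorem \ref{maintheorem:extension} to obtain a unique $W \in \PV_d(\cone, V)$ with $\hat Z(g) = W(g + I_\A)$ for every $g \in \fconvl$, and then define
\[Z_{\tilde\cone}(h) \coloneqq W(h^*), \qquad h \in \tilde\cone.\]
Proposition \ref{proposition:dual_valuations} guarantees that $Z_{\tilde\cone}$ is a continuous valuation on $\tilde\cone$ that is polynomial of degree at most $d$ with respect to epi-translations. The characterizing identity follows from properties iii) and vi) of Proposition \ref{proposition:properties_transform}, which give $(f \square h_\A)^* = f^* + I_\A$; for $f \in \fconvcd$ this yields
\[Z_{\tilde\cone}(f \square h_\A) = W\bigl((f \square h_\A)^*\bigr) = W(f^* + I_\A) = \hat Z(f^*) = Z(f^{**}) = Z(f).\]

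For uniqueness, I would run the same translation in reverse: any other candidate $Z'_{\tilde\cone}$ gives rise to $W'(g) \coloneqq Z'_{\tilde\cone}(g^*) \in \PV_d(\cone, V)$, and the identity $Z(f) = Z'_{\tilde\cone}(f \square h_\A)$ combined with $(g + I_\A)^* = g^* \square h_\A$ forces $\hat Z(g) = W'(g + I_\A)$ on $\fconvl$. The uniqueness clause in Theorem \ref{maintheorem:extension} then gives $W' = W$, hence $Z'_{\tilde\cone} = Z_{\tilde\cone}$. There is no genuine obstacle: all of the substantive work has already been done in Theorem \ref{maintheorem:extension} and in the reconciliation of the two notions of support, so the proof amounts to careful bookkeeping through the Fenchel-Legendre duality.
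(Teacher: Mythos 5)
Your proposal is correct and takes essentially the same route as the paper: Theorem~\ref{maintheorem:extension_geom} is obtained by transferring Theorem~\ref{maintheorem:extension} through the Fenchel--Legendre transform, using Proposition~\ref{proposition:dual_valuations} together with the preceding identification of the geometric support with that of Proposition~\ref{proposition:CharacterizationSupport}. Your write-up simply makes explicit the bookkeeping that the paper leaves implicit (the identity $(f\square h_\A)^*=f^*+I_\A$ from properties iii) and vi), the dual version $(g+I_\A)^*=g^*\square h_\A$ for uniqueness, and the inverse correspondence between $(\A,\Oc)$-cones and $(\mathfrak A,\mathfrak O)$-cones via biduality).
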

    Finally, notice that both $\fconvcd$ and $\fconvs$ are $(\mathfrak{A},\mathfrak{O})$-cones for \[\mathfrak{A}=\mathfrak{O}=\{(0,t) \in \Rn \times \R \colon t \geq 0 \}.\] We have the following dual version of Corollary \ref{maincorollary:extensionRealValuations}.
	\begin{corollary}
        Let $Z:\fconvcd \rightarrow \R$ be a continuous polynomial valuation of degree at most $d$. Then $Z$ extends uniquely by continuity to a continuous valuation on $\fconvs$.
	\end{corollary}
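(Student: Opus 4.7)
The plan is to transfer the problem via the Fenchel--Legendre transform and then apply Corollary \ref{maincorollary:extensionRealValuations}. The transform is a continuous bijection between $\fconvs$ and $\fconvf$ that restricts to a continuous bijection between $\fconvcd$ and $\fconvl$, and interchanges $\vee$ with $\wedge$ by Proposition \ref{proposition:properties_transform}. Consequently, pre- or post-composing with it automatically preserves the valuation and continuity properties.

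First I would define $\tilde Z : \fconvl \to \R$ by $\tilde Z(g) \coloneqq Z(g^*)$. A direct computation from property v) of Proposition \ref{proposition:properties_transform} shows that for $X = (x,t) \in \Rnn$ and $\ell_X(\xi) \coloneqq \langle x, \xi \rangle - t$,
\begin{equation*}
    (g + \ell_X)^* = \tau_X g^*.
\end{equation*}
Since $Z$ is polynomial of degree at most $d$ with respect to epi-translations, the map $X \mapsto Z(\tau_X g^*)$ is polynomial of degree at most $d$ on $\Rnn$; as $\ell \mapsto \ell_X$ realises a linear isomorphism between $\mathrm{Aff}(n)$ and $\Rnn$, this gives $\tilde Z \in \PV_d(\fconvl, \R)$.

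Next I would invoke Corollary \ref{maincorollary:extensionRealValuations} to obtain a unique continuous extension $\hat Z \in \PV_d(\fconvf, \R)$ of $\tilde Z$, and then set $Z_{\fconvs}(f) \coloneqq \hat Z(f^*)$ for $f \in \fconvs$. Running the transfer arguments of the previous paragraph in reverse, $Z_{\fconvs}$ is a continuous valuation on $\fconvs$ that is polynomial of degree at most $d$ with respect to epi-translations. For $f \in \fconvcd$ we have $f^* \in \fconvl$, and hence
\begin{equation*}
    Z_{\fconvs}(f) = \hat Z(f^*) = \tilde Z(f^*) = Z(f^{**}) = Z(f),
\end{equation*}
so $Z_{\fconvs}$ genuinely extends $Z$. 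Uniqueness follows from the density of $\fconvcd$ in $\fconvs$ in the epi-topology, obtained by approximating $f \in \fconvs$ by the sequence $(f + I_{B_k(0)})_k \subset \fconvcd$. I do not anticipate any serious obstacle: the whole argument is essentially bookkeeping once the correspondence between addition of affine functions and epi-translations is made explicit, which is the content of the displayed identity above.
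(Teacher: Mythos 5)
Your proof follows exactly the dualization strategy the paper intends: transfer to $\fconvl$ via the Fenchel--Legendre transform, apply Corollary \ref{maincorollary:extensionRealValuations}, and transfer back. The computation $(g+\ell_X)^* = \tau_X g^*$ is the right identity, and the interchange of $\vee$ and $\wedge$ under conjugation correctly carries the valuation property across. This is the argument the paper has in mind when it presents the corollary as the ``dual version'' of Corollary \ref{maincorollary:extensionRealValuations}, with Proposition \ref{proposition:dual_valuations} doing the bookkeeping; your writeup just makes the steps explicit.

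One small repair is needed in the uniqueness step: the family $(f + I_{B_k(0)})_k$ does not always land in $\fconvcd$. If $f \in \fconvs$ has a bounded but non-closed domain (say $\dom f$ is an open ball, which is consistent with lower semicontinuity and super-coercivity), then $\dom(f+I_{B_k(0)}) = \dom f \cap B_k(0)$ is bounded but not closed, hence not compact, so $f + I_{B_k(0)} \notin \fconvcd$. The density of $\fconvcd$ in $\fconvs$ is nevertheless true and is exactly what Proposition \ref{proposition:ConvcdDense} combined with the continuity and involutivity of the Fenchel--Legendre transform gives: for $f \in \fconvs$, take $K_j \in \KN$ with $h_{K_j}(\cdot,-1) \to f^*$ in $\fconvf$; then $\lfloor K_j\rfloor = h_{K_j}(\cdot,-1)^* \in \fconvcd$ converges to $f^{**} = f$. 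Replacing your ad hoc approximation by this one closes the gap.
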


\section*{Acknowledgements} The second author was supported by the Austrian Science Fund (FWF): 10.55776/P34446 and, in part, by the Gruppo Nazionale per l’Analisi Matematica, la Probabilit\'a e le loro Applicazioni (GNAMPA) of the Istituto Nazionale di Alta Matematica (INdAM). 
 
\footnotesize
\bibliography{references}
\bibliographystyle{siam}

\parbox[t]{8.5cm}{
Jonas Knoerr\\
Institut f\"ur Diskrete Mathematik und Geometrie\\
TU Wien\\
Wiedner Hauptstra{\ss}e 8-10, E104-6\\
1040 Wien, Austria\\
e-mail: jonas.knoerr@tuwien.ac.at}

\bigskip

\parbox[t]{8.5cm}{
Jacopo Ulivelli\\
Institut f\"ur Diskrete Mathematik und Geometrie\\
TU Wien\\
Wiedner Hauptstra{\ss}e 8-10, E104-6\\
1040 Wien, Austria\\
e-mail: jacopo.ulivelli@tuwien.ac.at}

\end{document}